\let\ol\overline
\let\ul\underline
\newtheorem{theorem}{Theorem}[section]
\newtheorem{lemma}[theorem]{Lemma}
\newtheorem{proposition}[theorem]{Proposition}
\newtheorem{corollary}[theorem]{Corollary}
\theoremstyle{definition}
\newtheorem{definition}[theorem]{Definition}
\newtheorem{claim}[theorem]{Claim}
\newtheorem{example}[theorem]{Example}
\def\multiset#1#2{\ensuremath{\left(\kern-.3em\left(\genfrac{}{}{0pt}{}{#1}{#2}\right)\kern-.3em\right)}}
\newcommand{\Q}{\mathbb{Q}}
\newcommand{\R}{\mathbb{R}}
\newcommand{\A}{\mathbf{A}}
\newcommand{\cF}{\mathcal{F}}
\newcommand{\cG}{\mathcal{G}}
\newcommand{\cH}{\mathcal{H}}
\newcommand{\cI}{\mathcal{I}}
\newcommand{\cP}{\mathcal{P}}
\renewcommand{\sp}{\textnormal{span}}
\newcommand{\codim}{\textnormal{codim}}
\newcommand{\prk}{\textnormal{prk}}
\newcommand{\grk}{\textnormal{grk}}
\newcommand{\str}{\textnormal{str}}
\newcommand{\brk}{\textnormal{Brk}}
\let\ol\overline
\let\ul\underline
\author{Amichai Lampert}
\address{Department of Mathematics, University of Michigan, Ann Arbor, MI}
\email{\href{mailto:amichai@umich.edu}{amichai@umich.edu}}
\thanks{AL was supported by NSF grant DMS-2402041}
\author{Andrew Snowden}
\email{\href{mailto:asnowden@umich.edu}{asnowden@umich.edu}}
\thanks{AS was supported by NSF grant DMS-2301871}
\author{Tamar Ziegler}
\email{\href{mailto:tamar.ziegler@mail.huji.ac.il}{tamar.ziegler@mail.huji.ac.il}}
\thanks{TZ was supported by ISF grant 697/24 and ERC grant DyAddAlg  (101199203)}
\title{Polynomial bounds for Birch's theorem}
\date{}
\begin{document}

\begin{abstract}
Let $K$ be a number field and $f_1,\ldots,f_s\in K[x_1,\ldots,x_n]$  forms of odd degrees. In 1957, Birch proved that if $n$ is sufficiently large then the forms always have a nontrivial zero in $K^n$. Apart from some small degrees, the number of variables required was so large that it has been described as "not even astronomical". We prove that for any fixed degree, $n$ may be taken as a polynomial in $s$. We deduce this from a stronger result -- the Zariski closure of the set of rational zeros has codimension bounded by a polynomial in $s$. When $K$ is totally imaginary, our results hold for forms of any (possibly even) degrees.
\end{abstract}

\dedicatory{To Steve Schloss, for his 95th birthday}

\maketitle

\section{Introduction}

Let $K$ be a number field and let  $f_1,\ldots,f_s\in K[x_1,\ldots,x_n]$ be forms (i.e. homogeneous polynomials) of \emph{odd} degrees $\le d$ in $n$ variables. We say that the system $\ul{f} = (f_1,\ldots,f_s)$ \emph{has a solution} if there exists $0\neq x\in K^n$ which satisfies $f_i(x) = 0$ for all $1\le i\le s$. In \cite{Birch-odd}, Birch proved that if $n \ge N(d,s)$ then any such $\ul{f}$ has a solution.  The proof relied on an elegant but highly iterative inductive argument which yielded extremely weak bounds.

We slightly abuse notation by now writing $N(d,s)$ for the \emph{minimal} integer for which Birch's theorem holds. Much work has been done on improving the bounds for $N(d,s)$. We mention just a few landmark results for $K=\Q$. The most well-understood case is that of a single cubic form. In a remarkable series of papers, Davenport first proved that $N(3,1) \le 32$ \cite{Dav-32}, then reduced this to 29 \cite{Dav-29} and finally to 16 \cite{Dav-16}. This last record held for over forty years, until it was shattered by Heath-Brown \cite{HB-14} who showed that 14 variables are sufficient. On the other hand, it is known that at least 10 variables are required.

For systems of cubics, Schmidt proved \cite{Sch-cubics} that $N(3,s) \le (10s)^5$. The dependence on $s$ was subsequently improved to quartic by Dietmann \cite{Diet-cubics}. For systems of quintic forms, the best bound available is due to Wooley \cite{Wool-quintic} and is approximately $ c^{s^6}$ for a very large constant $c$. For systems of forms of larger odd degrees, the only explicit bounds available are of recursive type and are also due to Wooley \cite{Wool-explicit}. They are so large that Wooley describes them as "not even astronomical".

Our main result is that $N(d,s)$ is bounded by a polynomial in $s$, for \emph{all} $d$. In fact, we prove substantially more. Writing $Z = (f_1=\ldots=f_s = 0) \subset \A^n$, we have:

\begin{theorem}\label{thm:main-num}
    For any $d$ there exist constants $A = A_{\ref{thm:main-num}}(d),B = B_{\ref{thm:main-num}}(d)$ such that $\codim_{\A^n} \overline{Z(K)} < As^B$, where the closure is taken in the Zariski topology. In particular, $N(d,s) \le As^B$.   
\end{theorem}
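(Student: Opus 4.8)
The plan is to reduce Theorem~\ref{thm:main-num} to producing, inside $Z$, a $K$-subvariety $W$ whose $K$-points are Zariski dense and whose codimension in $\A^n$ is polynomial in $s$: since $W(K)\subseteq Z(K)$ we get $\ol{Z(K)}\supseteq\ol{W(K)}=W$, so $\codim_{\A^n}\ol{Z(K)}\le\codim_{\A^n}W$. There are two pillars. The first is the Hardy--Littlewood circle method in Birch's formulation for systems of forms: if the Birch singular locus $V^\ast=\{x:\rk(\partial f_i/\partial x_j)(x)<s\}$ satisfies $\codim_{\A^n}V^\ast>B_0(d,s)$ for an explicit bound $B_0$ of polynomial (indeed quadratic) size in $s$, and if $\ul f$ is locally soluble at every place of $K$, then $Z(K)$ is Zariski dense in $Z$ (one obtains an asymptotic for points of bounded height in boxes and congruence classes, and high strength forces $Z$ to be geometrically irreducible), so $\codim\ol{Z(K)}=\codim Z\le s$. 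Local solubility is automatic under our hypotheses: odd degrees guarantee a nonsingular common real zero once $n$ is large relative to $\dim V^\ast$ (a degree/topological argument), while over the $p$-adic places solubility of forms of bounded degree in many variables is classical; if $K$ is totally imaginary only the finite places intervene, removing the parity constraint. The second pillar is a quantitative regularization: when $\codim V^\ast$ is small, an inverse theorem for the associated exponential (Weyl) sums shows that some $\ol K$-linear combination of the top-degree forms has strength (partition rank) bounded by a polynomial in $\codim V^\ast$ for fixed $d$, hence can be written as a short sum $\sum_{j} u_jv_j$ of products of lower-degree forms.

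With these in hand I would argue by induction on $d$, the case $d\le 1$ being immediate since the zero locus of linear forms is a linear subspace of codimension $\le s$ with Zariski dense $K$-points. For the inductive step I may assume $n$ exceeds the target bound $A_d s^{B_d}$, as otherwise $\codim\ol{Z(K)}\le n$ already suffices. I then run a descent on the degree-$d$ part $g_1,\dots,g_t$ of the system: if some nonzero combination has strength below the regularity threshold $T$, I decompose it, replace it by its (positive, lower-degree) factors, and note that the new system has strictly fewer degree-$d$ forms and zero locus contained in $Z$; iterating this and then descending through degrees $d-1,d-2,\dots$ terminates either at a system of degree $\le d-1$, to which the inductive hypothesis applies, or at a system whose graded pieces are all regular --- after a Galois-averaging step to replace the $\ol K$-witness of low strength by a genuine reduction over $K$ (passing, at worst, through the $K$-strength, which is controlled by the $\ol K$-strength up to a bounded factor). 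For a regular system the first pillar gives $\ol{Z(K)}=Z$ and $\codim\ol{Z(K)}\le s$, and unwinding the descent yields the claimed polynomial bound on $\codim\ol{Z(K)}$, with $B_d$ depending only on $d$.

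The principal obstacle --- and the point at which the argument must be genuinely careful --- is keeping the descent polynomial. Each decomposition step removes one form but introduces up to $\sim T$ new ones, while the circle method forces $T$ to exceed a fixed polynomial in the \emph{final} number of forms; carried out naively over $d$ degrees this makes the number of forms grow like a tower in $d$, or leaves the resulting threshold inequality with no solution, since Birch's constant $B_0(d,\cdot)$ is at least linear in the number of forms. The resolution must come from a sharper regularization in which the number of auxiliary lower-degree forms needed to push $\codim V^\ast$ past a threshold $B$ grows strictly more slowly than $B$ --- so that the fixed-point inequality, roughly $B\asymp_d\bigl(s+\text{(cost of regularizing to }B)\bigr)^2$, admits a solution with $B$ polynomial in $s$. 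This in turn relies on polynomial --- and in the relevant regime effectively sublinear --- relations among partition rank, analytic rank, and the codimension of the singular locus for forms of fixed degree, together with careful bookkeeping of how strength, and hence $\codim V^\ast$, degrades when the surviving forms are restricted to the subvariety cut out so far, and of how the thresholds at successive degrees must be chosen to remain mutually consistent. I expect essentially all the real work of the proof to be concentrated here.
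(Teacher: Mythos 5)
Your overall framing (exhibit $W\subseteq Z$ with dense $K$-points and codimension polynomial in $s$, using circle-method density for ``regular'' systems plus a strength-based descent) matches the paper's, but the two places where you defer the difficulty are exactly where the paper's actual content lies, and as written the argument has a genuine gap. First, your descent produces a system whose graded pieces are regular only in the \emph{relative} sense: each degree-$i$ part has high strength modulo the lower-degree forms extracted so far. The circle-method inputs you invoke (Birch/Skinner-type results, Proposition \ref{prop:skinner}) require a bound on the singular locus of the \emph{full} system, i.e.\ absolute regularity, and relative regularity does not imply it; conversely, if you insist on absolute regularity of the whole system during the descent, you are back to Schmidt-style regularization and the bookkeeping is not polynomial --- this is precisely the obstacle you name at the end, and saying ``the resolution must come from a sharper regularization'' is not a mechanism. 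The paper's resolution is structurally different: it regularizes into a \emph{tower} with respect to relative strength (Proposition \ref{prop:rel-reg}, which is cheap, polynomial in $s$), and then proves a separate density theorem for relatively strong towers (Theorem \ref{thm:dense}) by induction on the height, in which the circle method is only ever applied to the top layer after restricting to a generic subspace spanned by $K$-points of the zero locus of the Taylor expansions of the lower layers; the mixed-degree, cross-layer interaction your sketch leaves unaddressed is handled entirely by this subspace-restriction step.

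Second, your ``Galois-averaging step ... passing, at worst, through the $K$-strength, which is controlled by the $\ol K$-strength up to a bounded factor'' assumes away the paper's technical crux. Averaging Galois conjugates does give a $K$-linear combination of low \emph{absolute} strength (this is the easy Lemma \ref{lem:Gal-des}), but your descent needs to decompose that form into lower-degree forms \emph{over $K$}, i.e.\ it needs low strength over $K$, and the comparison between strength over $K$ and over $\ol K$ --- in the relative/tower setting relevant here --- is not a ``bounded factor'' fact: it is Theorem \ref{thm:reg}, whose proof (polarization, $\cI$-ranks, cloning, and the gluing of low-rank representations at many rational points) occupies the second half of the paper. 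Related quantitative inputs you cite as available, such as polynomial inverse theorems relating analytic rank, partition rank and the singular locus for fixed degree, are fine for the geometric side but do not substitute for either of these two missing ingredients. So the proposal correctly identifies where the work is, but does not supply the two ideas (relative regularization into towers with a height-induction density theorem, and the $K$ versus $\ol K$ strength comparison) that make the bound polynomial.
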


In our discussion so far, the assumption that the $f_i$ have odd degrees was a natural way to avoid positive definite forms such as $x_1^2+\ldots+x_n^2$ over $\Q$. When $K$ is totally imaginary (for example $K = \Q[i]$) this is no longer an issue. In this setting, we will see that theorem \ref{thm:main-num} applies for even degrees as well. Actually, it will hold for a much larger class of fields. 

\begin{definition}\label{def:Brauer}
    $K$ is called a \emph{Brauer field} if for any $d\ge 1$ there exists an integer $\phi  = \phi_d$ such that $a_1x_1^d+\ldots+a_\phi x_\phi^d $ has a solution, for any $a_1,\ldots,a_\phi\in K$.
\end{definition}

Birch proved that $p$-adic fields are Brauer \cite{Birch-p-adic} and Peck proved this for totally imaginary number fields \cite{Peck}. See \cite{BDS} for many more examples of Brauer fields. Brauer famously proved \cite{Brauer} that for every Brauer field $K$ there exists $n_0(d,s)$ such that any collection of forms $f_1,\ldots,f_s\in K[x_1,\ldots,x_n]$ of degrees $\le d$ with $n\ge n_0$ has a solution. As in Birch's theorem, the proof was based on a highly iterative induction and yielded very weak bounds for $n_0(d,s)$. This remained the state of affairs for many years until Leep and Schmidt \cite{LS} introduced an efficient modification of Brauer's inductive argument, obtaining a value of $n_0(d,s)$ that was polynomial in the parameters $s,\phi_2,\ldots,\phi_d$. Wooley refined their inductive scheme to yield a further improvement in \cite{Wooley}. For Brauer fields we obtain an improvement analogous to theorem \ref{thm:main-num}.

Our main result is as follows.

\begin{theorem}\label{thm:main-Brauer}
For any $d$ there exist constants $A = A_{\ref{thm:main-Brauer}}(d), B = B_{\ref{thm:main-Brauer}}(d)$ such that the following holds. If $K$ is a Brauer field of characteristic zero and $f_1,\ldots,f_s\in K[x_1,\ldots,x_n]$ are forms of degree $\le d$ then
\[
\codim_{\A^n} \overline{Z(K)} \le A(s+\phi_2+\ldots+\phi_d)^B.
\]
In particular, by a bound from \cite{Wooley}, if $K$ is a totally imaginary number field we get
\[
\codim_{\A^n} \overline{Z(K)} \le A (s+e^{2d+1})^B \le A's^B.
\]
\end{theorem}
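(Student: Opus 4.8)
The plan is to run an efficient version of Brauer's induction on the degree $d$, but instead of tracking only the number of variables needed to force a single common zero, I would track the full codimension of $\overline{Z(K)}$ as a function of $s$ and the parameters $\phi_2,\ldots,\phi_d$. The base case $d=1$ is linear algebra: a system of $s$ linear forms cuts out a linear subspace of codimension $\le s$, defined over $K$, so $\overline{Z(K)}$ has codimension $\le s$. For the inductive step, suppose the result is known for degrees $< d$ and we are given forms $f_1,\ldots,f_s$ of degree $\le d$. The idea, following Brauer--Leep--Schmidt--Wooley, is to look for a large linear subspace $L \subset \A^n$ on which all the $f_i$ of top degree $d$ vanish identically; on such an $L$ the remaining equations have degree $< d$, and we can apply the inductive hypothesis inside $L$. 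The key point is that finding $L$ is itself a "Birch-type" problem of lower degree: the condition that a $d$-form $f$ restrict to zero on the span of vectors $v_1,\ldots,v_k$ is equivalent to the vanishing of all the coefficient forms of $f(t_1 v_1 + \cdots + t_k v_k)$ regarded as a polynomial in $t_1,\ldots,t_k$, and these coefficient forms have degrees $1,2,\ldots,d$ (actually $\le d-1$ after accounting for homogeneity in an appropriate sense, or one handles the degree-$d$ piece via the $\phi_d$ Brauer bound for diagonal forms). Iterating this, one reduces the "kill the top degree" step to a bounded system of forms of degrees $\le d-1$, to which the inductive hypothesis — now with a polynomially-controlled number of forms — applies.

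In more detail, I would set up the induction to prove a statement of the shape: there is a function $C_d(s,\phi_2,\ldots,\phi_d)$, polynomial in all its arguments, such that any $s$ forms of degree $\le d$ over a Brauer field $K$ have $\codim_{\A^n}\overline{Z(K)} \le C_d$. Given the top-degree forms $f_1,\ldots,f_t$ (those of degree exactly $d$, $t \le s$), I first use the Brauer property in degree $d$ to produce many disjoint "diagonalizing" directions, then bootstrap: on a subspace of dimension roughly $C_{d-1}(\text{poly}(s,\vec\phi))$ I can force all of $f_1,\ldots,f_t$ to vanish identically by solving a system of $\le \text{poly}(t,d)$ forms of degree $\le d-1$ via the inductive hypothesis. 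Restricting the lower-degree equations $f_{t+1},\ldots,f_s$ to this subspace and applying the inductive hypothesis once more inside it, I get a $K$-subvariety of the subspace, hence of $\A^n$, contained in $Z$, whose codimension in $\A^n$ is bounded by (codimension of the subspace) $+$ (codimension inside the subspace) $\le \text{poly}(s,\phi_2,\ldots,\phi_d)$. Unwinding the recursion $C_d \mapsto C_{d-1}(\text{poly})$ a bounded number ($d-1$) of times keeps everything polynomial in $s,\phi_2,\ldots,\phi_d$, with constants $A_{\ref{thm:main-Brauer}}(d), B_{\ref{thm:main-Brauer}}(d)$ depending only on $d$. For the final sentence, I substitute Wooley's bound $\phi_j \le e^{2j+1}$ (valid for totally imaginary number fields, and more generally recorded in \cite{Wooley}): then $\phi_2+\cdots+\phi_d \le d\,e^{2d+1}$ is an absolute constant depending only on $d$, so $A(s+\phi_2+\cdots+\phi_d)^B \le A(s+e^{2d+1})^B$, and absorbing the constant into a new $A'=A'(d)$ gives $\le A' s^B$.

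The main obstacle I anticipate is making the "kill the top-degree forms on a large linear subspace" step genuinely polynomial rather than merely finite. Naively, forcing a single $d$-form to vanish on a $k$-dimensional subspace costs the vanishing of $\binom{k+d-1}{d}$ coefficient forms — exponential in $d$ but, crucially, \emph{polynomial in $k$ for fixed $d$} — and these have degree $\le d$, so a direct appeal to the inductive hypothesis (which is for degree $\le d$, not $< d$) would be circular. The standard fix, and the one I would implement carefully, is Leep--Schmidt's observation that one should diagonalize the top-degree forms incrementally using the degree-$d$ Brauer constant $\phi_d$ to peel off the pure degree-$d$ obstruction one direction at a time, so that what remains to be solved by induction has strictly smaller degree; quantifying this peeling process so that the dimension loss is polynomial in $s$ and $\phi_d$ (rather than, say, growing like a tower) is the technical heart of the argument. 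A secondary subtlety is purely algebro-geometric: one must check that "$\overline{Z(K)}$" behaves well under restriction to $K$-rational linear subspaces — i.e. that $\overline{Z(K) \cap L} = \overline{Z(K)} \cap L$ set-theoretically suffices, or more precisely that a $K$-subvariety of $L$ contained in $Z\cap L$ is a $K$-subvariety of $\A^n$ contained in $Z$ of the expected codimension — but this is routine since $L$ is defined over $K$ and codimension is additive for a subvariety of a linear subspace.
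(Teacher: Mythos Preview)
Your proposal takes a genuinely different route from the paper, and it contains a real gap in the dimension count that I do not see how to repair along the lines you sketch. The paper does \emph{not} run a Brauer--Leep--Schmidt induction on the degree. Instead it invokes relative regularization (Proposition~\ref{prop:rel-reg}) to replace $f_1,\ldots,f_s$ by a $(C',D',\phi_2+\cdots+\phi_d)$-strong tower $\cF$ with $(f_1,\ldots,f_s)\subset(\cF)$ and $|\cF|\le E(s+\phi_2+\cdots+\phi_d)^F$; then Theorem~\ref{thm:reg} upgrades strength over $K$ to absolute strength, and Theorem~\ref{thm:dense} shows that the $K$-points of $Z(\cF)$ are Zariski dense. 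One concludes $\overline{Z(K)}\supseteq\overline{Z(\cF)(K)}=Z(\cF)$, whence $\codim_{\A^n}\overline{Z(K)}\le|\cF|$. All the work is in Theorems~\ref{thm:dense} and~\ref{thm:reg}; the deduction of Theorem~\ref{thm:main-Brauer} itself is a few lines.

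The flaw in your argument is the claim that ``(codimension of the subspace) $+$ (codimension inside the subspace) $\le\text{poly}(s,\phi_2,\ldots,\phi_d)$''. You take $L$ of \emph{dimension} roughly $C_{d-1}(\text{poly})$, so $\codim_{\A^n}L=n-O_{d,s,\vec\phi}(1)$, which is not bounded independently of $n$. Nor can this be fixed by asking for $L$ of bounded \emph{codimension} on which the top-degree forms vanish identically: take $K=\C$ and the single smooth quadric $f=x_1^2+\cdots+x_n^2$. Any linear subspace on which $f$ vanishes identically is totally isotropic, hence has dimension at most $\lfloor n/2\rfloor$ and codimension at least $\lceil n/2\rceil$; yet the theorem asserts $\codim_{\A^n}\overline{Z(\C)}=1$. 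More generally, any form of high strength refuses to vanish on linear subspaces of small codimension, so a Brauer-type search for isotropic linear subspaces can at best recover $n_0(d,s)\le\text{poly}(s,\phi_2,\ldots,\phi_d)$---which, as the introduction notes, was already in Leep--Schmidt---but cannot give a codimension bound uniform in $n$. The obstacle you anticipate (keeping the ``kill the top degree'' step polynomial) is not the real one; the real one is that the subvariety your construction produces has codimension growing with $n$.
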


Theorems \ref{thm:main-num} and \ref{thm:main-Brauer} will follow from relative regularization combined with the next two results (we define all of the relevant notions in the next section).

\begin{theorem}\label{thm:dense}
    For any $d$ there exist constants $C = C_{\ref{thm:dense}}(d), D = D_{\ref{thm:dense}}(d)$ such that the following holds. If $K$ is a Brauer field of characteristic zero and $\cF$ is absolutely $(C,D, \phi_2+\ldots+\phi_d)$-strong then the $K$-points of $Z(\cF)$ are Zariski dense. The same conclusion holds when $K$ is a number field, $\cF$ is composed of forms of odd degrees and is absolutely $(C,D,1)$-strong.
\end{theorem}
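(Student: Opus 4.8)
The plan is to induct on the largest degree $d$ occurring in $\cF$; when $d=1$ the conclusion is immediate, since $Z(\cF)$ is then a linear subspace defined over $K$. For the inductive step one writes $\cF = \cF_d \sqcup \cF_{<d}$, with $\cF_d$ the forms of top degree $d$ (which we may assume is nonempty). The strategy is to produce a family of linear subspaces $L_\alpha \cong \A^m \subseteq \A^n$, all contained in $Z(\cF_d)$, which is (a) defined over $K$ and parametrized by an affine space $\A^N$, with incidence variety $\mathcal{U} = \{(\alpha,x): x\in L_\alpha\}$ irreducible; (b) dominant enough that the restricted family $\{L_\alpha \cap Z(\cF)\}_\alpha$ still dominates $Z(\cF)$; and (c) such that for generic $\alpha$ the restriction $\cF_{<d}|_{L_\alpha}$ is again absolutely $(C,D,k)$-strong, now a system of degree $<d$ in $m$ variables. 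Granting such a family, the conclusion is formal: by (c) and the induction hypothesis, $Z(\cF_{<d}|_{L_\alpha})(K) = (Z(\cF)\cap L_\alpha)(K)$ is Zariski dense in $Z(\cF)\cap L_\alpha$ for a dense open set of $\alpha$, which contains a Zariski-dense set of $K$-rational $\alpha$ because $K$ is infinite; pushing these $K$-points forward through the $K$-defined dominant family $\mathcal{U}$ then yields a Zariski-dense set of $K$-points of $Z(\cF)$. (The only fact about $Z(\cF)$ used here is that it is geometrically irreducible, which follows from the strength hypothesis.)

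The substance is the construction of the family, and this is where the strength hypothesis and the Brauer property of $K$ enter. Writing an $m$-plane through the origin as the row space of an $m\times n$ matrix $M$, the condition $\cF_d|_L\equiv 0$ becomes $f(yM)\equiv 0$ in $y$ for every $f\in\cF_d$, which, read off coefficient by coefficient, is a system $\cG$ of at most $|\cF_d|\binom{m+d-1}{d}$ forms of degrees $\le d$ in the $mn$ entries of $M$. For $m$ a suitable polynomially bounded function of the parameters, a dimension count shows $Z(\cG)$ is nonempty of large dimension over $\ol{K}$; the work is to obtain $K$-points of it, many of them, varying in an $\A^N$, and arranged so that the associated planes sweep out a dense subset of $Z(\cF_d)$. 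This should come from the efficient, Leep--Schmidt-style descent, now powered by strength: one uses the strength of $\cF_d$ to bring the relevant pieces of $\cG$ into diagonal-type normal form after a further linear change of the matrix variables, at which stage the defining property of a Brauer field -- solubility of $a_1x_1^d+\cdots+a_\phi x_\phi^d$ with $\phi=\phi_d$, or, in the number-field case of odd degrees, the classical solubility of diagonal forms of odd degree, which plays the same role -- supplies the $K$-points; the choices left free in the descent furnish $\A^N$. Property (c) is then a question of the behaviour of strength under linear restriction: passing to a generic member of the family lowers strength by at most a polynomial amount, which one absorbs by taking $C_{\ref{thm:dense}}(d)$ and $D_{\ref{thm:dense}}(d)$ large enough to survive all $d$ rounds of restriction performed by the induction.

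I expect the principal difficulty to be exactly this coordination. One must choose the constants $C_{\ref{thm:dense}}(d), D_{\ref{thm:dense}}(d)$, the dimension $m$, the number $N$ of free parameters, and the size of $\cG$ so that every quantity stays polynomial in $|\cF|+\phi_2+\cdots+\phi_d$ while the strength survives the entire descent; and, more seriously, one must run the subspace search \emph{in families} and \emph{over $K$} at once, ensuring both that the resulting $\A^N$-family is dominant widely enough for the density argument of the first paragraph and that its generic member restricts $\cF_{<d}$ to a still-strong system. It is precisely here that the strength and Brauer hypotheses must be combined rather than used in succession, and this is the new ingredient beyond the classical one-point Birch--Brauer--Leep--Schmidt arguments.
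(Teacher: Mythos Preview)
Your architecture is right---an induction that produces linear subspaces of the zero locus, with the Brauer property (or odd-degree solubility) playing the decisive arithmetic role---but you have set the induction up inside-out relative to the paper, and the reversal creates a real gap.

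The paper inducts on the height $h$ of the tower. In the inductive step it finds subspaces $V_0 \subset Z(\cF_{<h})$ by choosing $K$-rational $\ul v \in Z_m(\cF_{<h})$; since $T_m(\cF_{<h})$ is a strong tower of height $h-1$, the inductive hypothesis supplies these. On $V_0$ one then solves only the single top layer $\cF_h|_{V_0}$, and for this the paper invokes a genuine base case: for a single-degree system of high Birch rank over a Brauer field, Zariski density of $K$-points is the main theorem of \cite{Lam-density}; over a number field with odd degrees, it follows from Skinner's weak approximation \cite{Skinner-HLS} together with the elementary density of real points. Both inputs ultimately rest on the circle method.

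Your proposal reverses the roles: find subspaces $L_\alpha \subset Z(\cF_d)$ first, then apply induction to $\cF_{<d}|_{L_\alpha}$. The gap is in the first move. The condition $L \subset Z(\cF_d)$ is exactly $\ul v \in Z_m(\cF_d)$, a single-degree-$d$ system in $mn$ variables, and you need a Zariski-dense set of $K$-points there. You propose to obtain this by a ``Leep--Schmidt-style descent to diagonal-type normal form, powered by strength.'' But high strength does \emph{not} bring a form into diagonal shape (the converse implication fails badly), and the Leep--Schmidt machinery, even with its free parameters, yields a single affine-linear subspace of solutions---a proper closed subvariety of $Z_m(\cF_d)$, not a Zariski-dense family. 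Density of $K$-points in a high-strength single-degree system is precisely the hard arithmetic input; it is not something one can manufacture from the Brauer axiom alone without the circle-method results the paper cites.

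There is a second, softer gap. You assert that restriction to a generic $L_\alpha$ ``lowers strength by at most a polynomial amount,'' but your $L_\alpha$ is generic only inside $Z_m(\cF_d)$, not in the full Grassmannian, and you need the \emph{entire tower} $\cF_{<d}|_{L_\alpha}$ to remain strong, layer by layer. The paper only ever needs the single layer $\cF_h|_{V_0}$ strong, and gets it cleanly: $T_m(\cF_h)$ surjects from $Z_m(\cF_{<h})$ onto the full coefficient space (regular sequence plus Lemma~\ref{reg-to-surj}), so $\cF_h|_{V_0}$ is a \emph{generic} tuple of degree-$d_h$ forms and Lemma~\ref{lem:gen-str} applies. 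Your version would need a tower-level analogue of Lemma~\ref{lem:gen-str} (``generic towers are strong''), which is plausible but is neither in the paper nor in your sketch.
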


It is instructive to compare this with results of a somewhat similar spirit which are obtained via the Hardy-Littlewood circle method. For systems of sufficiently nonsingular forms, asymptotic formulas were obtained for the number of integer solutions in \cite{Birch,Skinner-HLS,BHB,FM}. Schmidt \cite{Schmidt} used this to deduce lower bounds for the asymptotic point count of integer solutions for general systems of equations, but with very weak dependence on $d,s$.  Our hypothesis on $\cF$ is substantially more flexible than the one required in the cited results, and we will see in proposition \ref{prop:rel-reg} how this flexibility allows us to obtain such dramatic improvements for the  bounds of theorems \ref{thm:main-num} and \ref{thm:main-Brauer}.

\begin{theorem}\label{thm:reg}
    Given any $C,D,d$ there exist constants $C' = C'_{\ref{thm:reg}}(d,A,B),D' = D'_{\ref{thm:reg}}(d,A,B)$ such that the following holds. If $K$ is a field of characteristic zero and $\cF$ is a tower of forms in $K[x_1,\ldots,x_n]$ of degree $(1,\ldots,d)$ which is $(C',D',r)$-strong then it is absolutely $(C,D,r)$-strong. 
\end{theorem}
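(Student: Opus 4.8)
The plan is to argue by contraposition and to work one degree at a time up the tower, reducing everything to a single descent lemma for an individual form. Write $\cF=(\cF^{(1)},\dots,\cF^{(d)})$, with $\cF^{(e)}$ a tuple of forms of degree $e$. If $\cF$ fails to be absolutely $(C,D,r)$-strong, then there is a degree $e\le d$ and a $\overline{K}$-linear combination $f$ of the forms of $\cF^{(e)}$ whose strength over $\overline{K}$, relative to the ideal $J$ generated over $K$ by $\cF^{(1)},\dots,\cF^{(e-1)}$, lies below the threshold $k$ that the data $(C,D,r)$ imposes. From this I would produce a genuine $K$-linear combination $f'$ of $\cF^{(e)}$ with $\str_K(f'\bmod J)$ bounded in terms of $k$ and $e$ only. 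Taking $C'$ and $D'$ (functions of $d$ and of the given data $C,D$) large enough that the $(C',D',r)$-threshold exceeds this bound for every $e\le d$ then shows $\cF$ is not $(C',D',r)$-strong over $K$, which is the contrapositive. Two devices are needed: a passage from a bad $\overline{K}$-combination to a bad $K$-combination, and the core statement, which is the following.

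\emph{Descent lemma.} There is a function $F_e$ such that if $g$ is a form of degree $e$ over a characteristic-zero field $K$, $J$ is a homogeneous ideal generated by forms of degree $<e$, and $\str_{\overline{K}}(g\bmod J\overline{K}[x])<k$, then $\str_K(g\bmod J)<F_e(k)$.

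For the lemma I would first invoke the structure theory of strong systems over algebraically closed fields — the fact that, uniformly in the number of variables, the locus of forms of bounded strength is cut out by finitely many equations pulled back from boundedly many variables — to arrange that the relative collapse $g\equiv\sum_{i<k}a_ib_i\pmod{J\overline{K}[x]}$ is visible on a subvariety whose dimension and degree are bounded in terms of $k$ and $e$ alone. Then for a suitable $k'=k'(k,e)$ the $K$-scheme $W$ of tuples $\big((a_i,b_i)_{i<k'},c\big)$ with $c\in J$ and $\sum_{i<k'}a_ib_i=g+c$, restricted to the bounded-dimensional slice coming from that reduction, is nonempty over $\overline{K}$, defined over $K$, of some dimension $\delta$, and of degree at most $\Delta=\Delta(k,e)$. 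Choosing a Noether normalisation $W\to\A^\delta$ over $K$ (legitimate since $K$ is infinite) and taking the fibre over a general $K$-point yields a nonempty finite $K$-scheme of length $\le\Delta$, hence a closed point with residue field $L$ of degree $[L:K]\le\Delta$; this gives a relative collapse of $g$ defined over $L$. Finally I would descend from $L$ to $K$ by a basis argument: fix a $K$-basis $\omega_1=1,\dots,\omega_m$ of $L$ ($m\le\Delta$) with structure constants $\omega_a\omega_b=\sum_c\gamma_{abc}\omega_c$, $\gamma_{abc}\in K$; expand $a_i=\sum_a\omega_a a_{ia}$, $b_i=\sum_b\omega_b b_{ib}$, $c=\sum_a\omega_a c_a$ with $a_{ia},b_{ib}\in K[x]$ and $c_a\in J$; and read off the $\omega_1$-component of $g=\sum_{i<k'}a_ib_i+c$, which gives
\[
g=\sum_{i<k'}\sum_{a,b}\gamma_{ab1}\,a_{ia}b_{ib}+c_1,\qquad c_1\in J,
\]
so $\str_K(g\bmod J)\le k'm^2\le k'\Delta^2=:F_e(k)$, a quantity depending on $k$ and $e$ alone. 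The same basis trick applied to the coefficient vector of a bad $\overline{K}$-combination — which, by the same Noether-normalisation argument, may be taken over a bounded-degree extension of $K$ — supplies the $\overline{K}\to K$ passage for linear combinations.

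The routine part is this Galois/basis descent together with the bookkeeping: tracking relative against absolute strength, climbing the $d$ degrees of the tower, and converting between the $(C,D,r)$ parameters and honest strength thresholds. The real obstacle, and the only place where uniformity in the number $n$ of variables is used, is the first reduction in the lemma. Without cutting the situation down to a variety of bounded dimension and degree, the fibres of the multiplication map grow with $n$ and one cannot bound the degree of a field of definition of a collapse — a $K$-variety need not have a point over any extension of small degree. Making that reduction effective, and compatible both with the relative (``modulo $J$'') setting and with the tower, is where the structure theory for strong polynomials over $\overline{K}$ does the genuine work, and I expect it to be the technical heart of the proof.
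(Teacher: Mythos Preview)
Your reduction to a single descent lemma is sound, and the Galois/basis trick is correct --- the paper uses exactly this device in Lemma~\ref{lem:Gal-des} to pass from a bad $\overline K$-combination to a bad $K$-combination. The genuine gap is the step you yourself flag as the real obstacle: the ``structure theory'' reduction to a collapse scheme $W$ of dimension and degree bounded independently of $n$. The results you allude to (topological Noetherianity of polynomial functors, $\mathrm{GL}$-varieties) say that the locus of low-strength \emph{forms} is cut out by boundedly many $\mathrm{GL}$-orbits of equations uniformly in $n$; they say nothing about the scheme of \emph{decompositions}. Your $W$ parametrises tuples $(a_i,b_i,c)$ with $c$ ranging over a graded piece of $J$, and both its dimension and its degree grow with $n$. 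I do not see any slicing that cuts this down while retaining a $\overline K$-point, and in the relative setting (modulo a moving ideal $J$ generated by the lower layers of the tower) there is no such structural statement available. Without the bound on $\dim W$ and $\deg W$, the Noether-normalisation step gives no control on $[L:K]$, and the basis descent does not go through.

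The paper's proof never looks at a moduli space of collapses. Instead it polarises to multilinear towers (Corollary~\ref{cor:ml-reduction}), where two things become available that fail in the symmetric setting: $K$-points are automatically Zariski dense in the zero locus of any regular multilinear system over an infinite field (Proposition~\ref{dense-ml}), and one can specialise any subset of the variable slots to $K$-points while remaining multilinear in the rest. Given a $\overline K$-identity $F=\sum_i\alpha_i(x_I)A_i(x_{I^c})+\cdots\pmod{\cG}$, one plugs in generic $K$-points $y_1,\dots,y_r$ for $x_{I^c}$; the matrix $(A_i(y_j))$ is invertible, so each $\alpha_i$ is forced into a $K$-defined ideal generated by $\cG$, its specialisations $\cG(y_j)$, and the $K$-forms $F(\cdot,y_j)$. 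An induction on the combinatorial shape $\cI$ of the partition decomposition (Proposition~\ref{prop:I-rank}) handles the remaining terms, and a gluing lemma (Lemma~\ref{lem:gluing-deriv}) removes the auxiliary $\cG(y_j)$ at the end. The point is that the paper never searches for a field of definition of a collapse; it \emph{constructs} the $K$-forms directly by evaluation, and it is the passage to the multilinear world that makes this possible.
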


In the next section we quickly deduce theorems \ref{thm:main-num} and \ref{thm:main-Brauer} from these two results. After that we collect some geometric properties of regular towers and prove theorem \ref{thm:dense}. Then we turn to theorem \ref{thm:reg}, the technical crux of the paper.

\section{Deduction of theorems \ref{thm:main-num} and \ref{thm:main-Brauer}}

In this section we assume that theorems \ref{thm:dense} and \ref{thm:reg} hold. We begin with the definition of strength, introduced by Schmidt \cite{Schmidt}.

\begin{definition}
    The \emph{strength} of a form $f\in K[x_1,\ldots,x_n]$ is
    \[
    \str(f) = \inf \{r: f = g_1h_1+\ldots+g_rh_r\},
    \]
    where $g_i,h_i\in K[x_1,\ldots,x_n]$ are forms of lower degree. Note that linear forms have infinite strength. More generally, the strength of a collection of forms $f_1,\ldots,f_s$ of the same degree is 
    $$\str(f_1,\ldots,f_s) = \min \{ \str(a_1f_1+\ldots+a_sf_s) : a_1,\ldots,a_s\in K \textnormal{ not all zero}\}.$$
    The \emph{absolute strength} of $f$, denoted $\ol{\str}(f)$, is its strength in $\ol{K}[x_1,\ldots,x_n]$, where $\ol{K}$ is an algebraic closure of $K$. 
\end{definition}

The above definition generalizes to any standard graded algebra $R = K[x_1,\ldots,x_n]/I$, where $I$ is a homogeneous ideal. This generalization was first explored in \cite{LZ-rel}, where it was called relative rank.

\begin{definition}
    Given a form $f\in K[x_1,\ldots,x_n]$ of degree $d$, its \emph{relative strength} is 
    \[
     \str_I(f) = \inf \{r: f = g_1h_1+\ldots+g_rh_r \mod I\},
    \]
    where $g_i,h_i\in K[x_1,\ldots,x_n]$ are forms of lower degree. When $f_1,\ldots,f_s \in K[x_1,\ldots,x_n]$ are forms of the same degree their collective relative strength is
    $$\str_I(f_1,\ldots,f_s) = \min \{ \str(a_1f_1+\ldots+a_sf_s) : a_1,\ldots,a_s\in K \textnormal{ not all zero}\}.$$
\end{definition}

In \cite{Schmidt,Mil}, it is shown that good properties hold for collections of forms which have strength larger than some polynomial expression in the number of forms involved and an error parameter. For relative strength we will work with a similar hypothesis, described in the following definition.  

\begin{definition}
\begin{enumerate}
    \item A \emph{tower} $\cF$ of degree $\ul{d} = (d_1,\ldots,d_h)$ in $R$ is a collection of forms $\cF = (\cF_i)_{i\in [h]}$ where each  $\cF_i = (f_{i,j})_{j\in [s_i]}$ is a tuple of homogeneous elements of $R$ of degree $d_i.$ For $i\in [h]$ we denote by $\cF_{<i}$ and $\cF_{\le i}$ the tower obtained by keeping only those forms with $i'<i$ or $i'\le i,$ respectively.
    \item We say that the tower $\cF$ is $(A,B,r)$-\emph{strong} if for each $i\in [h]$ we have  
    \[
    \str_{\cF_{<i}} (\cF_i) > A(s_i+\ldots+s_h+r)^B,
    \]
    where $\str_{\cF_{<i}}$ denotes the strength in the quotient $R/(\cF_{<i})$.  It is \emph{absolutely} $(A,B,r)$-strong if $\cF\otimes \overline{K}$ is $(A,B,r)$-strong.
\end{enumerate}

\end{definition}

A key step in various applications of strength (e.g. \cite{Schmidt,BDS}) is a regularization process where an arbitrary collection of forms is replaced by a high strength collection which has some desirable property. This regularization process is extremely inefficient, leading to bounds of highly recursive type in any result where it is used. In \cite{LZ-rel} it was shown that relative strength retains many of the good properties of strength, while also naturally lending itself to an \emph{efficient} regularization process, allowing one to replace an arbitrary initial collection of forms $f_1,\ldots,f_s$ with a strong tower of size \emph{polynomial} in $s.$

\begin{proposition}\label{prop:rel-reg}
Given any $C,D,d$ there exist constants $E = E(C,D,d), F = F(C,D,d)$ such that the following holds. For any homogeneous $f_1,\ldots,f_s$ of positive degrees $\le d$ there exists a $(C,D,r)$-strong tower $\cF$ of degree $(1,\ldots,d)$ and size $s_1+\ldots+s_h \le E(s+r)^F$ such that the ideals satisfy the containment $(f_1,\ldots,f_s) \subset (\cF).$ Moreover, if the $f_i$'s all have odd degrees then we can choose $\cF$ to be composed only of homogeneous elements of odd degree.    
\end{proposition}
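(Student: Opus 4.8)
The plan is to build the tower $\cF$ degree by degree, starting from degree $1$ and working up to degree $d$, maintaining at each stage a strong tower whose ideal contains those $f_i$ already processed. Fix a target degree $e \in \{1,\ldots,d\}$ and suppose we have already constructed a $(C,D,r)$-strong tower $\cF_{<e}$ of degrees $(1,\ldots,e-1)$ with $(f_i : \deg f_i < e) \subset (\cF_{<e})$. We now want to add a layer $\cF_e$ in degree $e$ that absorbs the degree-$e$ forms among the $f_i$, while keeping the enlarged tower $(C,D,r)$-strong. The natural move is the following \emph{local regularization step}: working in the quotient ring $R_{<e} := K[x_1,\ldots,x_n]/(\cF_{<e})$, take the degree-$e$ forms $g_1,\ldots,g_m$ (the relevant $f_i$'s, reduced mod $(\cF_{<e})$) together with the forms already in $\cF_e$ if we are iterating; if their collective relative strength $\str_{\cF_{<e}}$ already exceeds the threshold $C(m + (\text{tail}) + r)^D$ we stop. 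Otherwise some nontrivial combination $h = \sum a_i g_i$ has small relative strength, say $h \equiv \sum_{k=1}^{t} p_k q_k \bmod (\cF_{<e})$ with $t \le C(\cdots)^D$ and $\deg p_k, \deg q_k < e$. We then throw the lower-degree forms $p_k, q_k$ into the appropriate earlier layers of the tower and replace $g_i$ by $g_i$ minus a suitable multiple — or more simply, we record that $h$ is now redundant and reduce the number of degree-$e$ generators by one, at the cost of introducing $O(t)$ new lower-degree forms. Crucially, adding forms to earlier layers can only \emph{help} the relative-strength condition for those layers (the quotient gets smaller, strength can only go up), but we must re-regularize the lower layers, which is why the argument must be organized carefully as a doubly-nested induction: an outer loop on the degree $e$, and within each degree an inner loop that alternately regularizes the current layer and pushes the debris down to be re-absorbed by the lower layers.

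The key bookkeeping is to control the total size. Each time we kill one degree-$e$ generator we spend at most $\mathrm{poly}(\text{current size} + r)$ new generators in lower degrees; the number of times we can do this is bounded because each such step strictly decreases $m$ (or an appropriate potential). Running this carefully from the bottom up, one shows by induction on $e$ that the size of $\cF_{\le e}$ is bounded by $E_e(s+r)^{F_e}$ for constants depending only on $C,D,d$; taking $E = E_d$, $F = F_d$ gives the claimed bound $s_1 + \ldots + s_h \le E(s+r)^F$. The ideal containment $(f_1,\ldots,f_s) \subset (\cF)$ is automatic: every $f_i$ of degree $< e$ lies in $(\cF_{<e}) \subset (\cF)$ by construction, and the degree-$e$ forms are exactly the (lifts of the) generators of $\cF_e$ together with the lower-degree forms we used to witness the reductions, all of which lie in $(\cF)$.

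For the final sentence, when all the $f_i$ have odd degree we want $\cF$ to consist only of odd-degree forms. The subtlety is that in a relation $h = \sum p_k q_k$ with $\deg h = e$ odd, the factors $p_k, q_k$ need not individually have odd degree — indeed since $\deg p_k + \deg q_k = e$ is odd, exactly one of each pair is even. To handle this we observe that it suffices to put into the tower the \emph{products} or suitably grouped even-degree pieces in a way that the generators added to layer $e'$ remain odd: concretely, whenever a relation forces an even-degree factor $q$, we do not add $q$ as a new generator but instead expand $q$ itself (it has degree $< e \le d$) via the Brauer/odd-degree structure, or we simply note that $p_k q_k$ can be absorbed by recording the odd-degree factor and treating the even-degree factor as a coefficient when forming the next quotient — this is exactly the mechanism that lets relative strength stay within the odd-degree world. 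This point is where I expect the main technical friction: making the "push the debris down" step respect the odd-degree constraint without blowing up the size bound. I would isolate it as a lemma asserting that a degree-$e$ form (with $e$ odd) of small relative strength modulo an odd-degree tower can be written, after enlarging the tower by polynomially many odd-degree forms, as $0$ in the new quotient — proved by the same inductive descent, using that $e$ odd forces the recursion to land on strictly smaller odd degrees.
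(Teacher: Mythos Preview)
Your proposal has the right general shape --- iteratively replace a low-strength combination by its lower-degree factors --- but two points are off, and the second one dissolves the difficulty you flag at the end.

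First, your claim that ``adding forms to earlier layers can only help the relative-strength condition for those layers (the quotient gets smaller, strength can only go up)'' is backwards. Enlarging the ideal $(\cF_{<j'})$ gives \emph{more} ways to write $f \equiv \sum g_k h_k$, so $\str_{\cF_{<j'}}$ can only \emph{decrease}; and adding a new form $p$ to $\cF_j$ itself introduces new linear combinations to control, which again can only lower the collective strength. So your bottom-up scheme with repeated re-regularization does not obviously terminate with polynomial bounds, and the ``doubly-nested induction'' you anticipate is a symptom of this. The paper sidesteps the issue entirely: it precomputes fixed thresholds $n_i$ by the \emph{top-down} recursion $n_d = m_d$, $n_i = m_i + C n_{i+1}^{D+1}$, and then runs a single loop --- at each step pick any layer $i$ with $\str_{\cF_{<i}}(\cF_i) \le C n_i^D$, delete one offending form, and add the $p_k$'s to the layers below. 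A downward induction on $i$ shows that $n_i$ bounds the total number of forms that ever appear in degrees $\ge i$, which gives both termination and the polynomial size bound with no nested re-regularization.

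Second, when you witness $\sum_j a_j f_{i,j} \equiv \sum_k p_k q_k \pmod{\cF_{<i}}$, you only need to add the $p_k$'s to the tower, not both factors: once $p_k \in (\cF)$ the product $p_k q_k$ is automatically in the ideal, so the deleted form still lies in $(\cF)$. This makes the odd-degree statement a one-liner: since $\deg p_k + \deg q_k = i$ is odd, exactly one of each pair has odd degree, so without loss of generality take $p_k$ to be that one. No separate ``odd-degree world'' lemma is needed.
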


\begin{proof}
    We begin with an initial tower $\cF^{(0)}$ obtained by placing $f_i$ in the $\deg(f_i)$ layer.  To each layer $i\in [d]$ we assign a number $n_i$ by a formula we'll soon describe. Now we regularize: If for each $i\in[d]$ we have $\str_{\cF_{<i}} (\cF_i) > Cn_i^D$ then we're done.
    
    Otherwise, there exists some $i$ and $a_1,\ldots,a_{s_i}\in K$ not all zero with 
    \begin{equation}\label{eq:rel-reg}
        \sum_{j=1}^{s_i} a_j f_{i,j} = \sum_{k=1}^t p_kq_k \mod (\cF_{<i}),
    \end{equation}
    where $t\le Cn_i^D.$ Assume without loss of generality that $a_1\neq 0$ and replace $\cF^{(k)}$ by the tower $\cF^{(k+1)}$ obtained by deleting $f_{i,1}$ and by adding each $p_k$ to the layer of degree $\deg(p_k)<i.$ This process must eventually terminate, yielding $\cF^{(k_0)} = \cF^{(\infty)} $ which is $(C,D)$-strong and $(f_1,\ldots,f_s) \subset (\cF).$ Now we give the formula for $n_i$ in terms of $m_i,$ the original number of forms in each degree,
    \[
    n_d = m_d\ ,\ n_i = m_i+Cn_{i+1}^{D+1}.
    \]
    Note that $|n_i| \ge \bigcup_{k=0}^\infty \cF_{\ge i}^{(k)}.$ This can be seen by downward induction on $i$, where the base case $i=d$ is clear. Assuming it holds for $i+1,$ we get that each of the $n_{i+1}$ forms appearing in degree $i+1$ and above can contribute at most $Cn_{i+1}^D$ forms of degree $i,$ proving the claim for degree $i.$  It's easy to see that all of the $n_i$ are polynomial in $s$ and so $s_1+\ldots+s_h\le n_1 \le Es^F$ as desired. For the statement about odd degrees, note that in equation \eqref{eq:rel-reg} either $p_k$ or $q_k$ must have odd degree, so assume without loss of generality that $p_k$ has odd degree.  
\end{proof}

\begin{proof}[Proof of theorems \ref{thm:main-num} and \ref{thm:main-Brauer}]
    We first prove theorem \ref{thm:main-Brauer}. Let $C,D$ be the constants of theorem \ref{thm:dense} and let $C',D'$ be the corresponding constants of theorem \ref{thm:reg}. Apply proposition \ref{prop:rel-reg} to replace $\ul{f}$ by a $(C',D',\phi_2+\ldots+\phi_d)$-strong tower $\cF$ of size at most $E(s+\phi_2+\ldots+\phi_d)^F.$ By theorem \ref{thm:reg}, $\cF$ is absolutely $(C,D,\phi_2+\ldots+\phi_d)$-strong. Let $Z = (f_1 = \ldots =f_s = 0)$ and $Z' = (\cF = 0).$ By theorem \ref{thm:dense}, $\overline{Z'(K)} = Z'$, which implies 
    \[
    \codim_{\A^n} \overline{Z(K)} \le \codim_{\A^n} \overline{Z'(K)} =  \codim_{\A^n} \overline{Z'} \le E(s+\phi_2+\ldots+\phi_d)^F.
    \]
    This proves theorem \ref{thm:main-Brauer} (with $A = E, B = F$).
    The proof of theorem \ref{thm:main-num} is identical, except that proposition \ref{prop:rel-reg} is applied for odd degrees. 
\end{proof}

\section{Preliminaries}

In this section we collect various results required for the proof of theorem \ref{thm:dense}.

\subsection{Relative strength and singularities}

In \cite{Birch}, Birch introduced the following definition.

\begin{definition}
    The \emph{Birch rank} of a form $f$ is $\brk(f) = \codim_{\A^n} (x:\nabla f(x) = 0).$ More generally, the Birch rank of a collection of forms $f_1,\ldots,f_s$ is 
    \[
    \brk(f_1,\ldots,f_s) = \codim_{\A^n} (x:\nabla f_1(x),\ldots,\nabla f_s(x) \textnormal{ are linearly dependent}).
    \]
\end{definition}

Birch rank and absolute strength are closely related by the following result.  
\begin{lemma}[Kazhdan-Lampert-Polishchuk \cite{KLP}]\label{lem:str-brk}
    \[
    \frac{\brk(f_1,\ldots,f_s)}{2} \le \ol{\str}(f_1,\ldots,f_s) \le (d-1) (\brk(f_1,\ldots,f_s) +s-1)
    \]
\end{lemma}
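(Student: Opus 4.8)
The plan is to prove the two inequalities separately. The left one is elementary; the right one I would reduce, by a dimension count, to the case $s=1$, where the assertion is the single-form bound $\ol\str(f)\le(d-1)\brk(f)$ that carries all the weight. For the left inequality, work over $\ol K$ and choose $a=(a_1,\ldots,a_s)\neq 0$ attaining the minimum that defines $\ol\str(f_1,\ldots,f_s)=:r$ (we may assume $r<\infty$, else there is nothing to prove), so that $g:=\sum_j a_j f_j=\sum_{i=1}^r g_ih_i$ with all $g_i,h_i$ forms of positive degree. On the closed set $V:=\{g_1=\cdots=g_r=h_1=\cdots=h_r=0\}$, of codimension at most $2r$, every summand of $\nabla g=\sum_i(h_i\nabla g_i+g_i\nabla h_i)$ vanishes; since $\nabla g=\sum_j a_j\nabla f_j$ with $a\neq 0$, the gradients $\nabla f_1,\ldots,\nabla f_s$ are linearly dependent at every point of $V$, so the locus where they are dependent has codimension at most $2r=2\ol\str(f_1,\ldots,f_s)$.

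For the upper bound, put $b=\brk(f_1,\ldots,f_s)$ (again we may assume $b<\infty$) and, over $\ol K$, form the incidence variety $\widetilde W=\{(x,[a])\in\A^n\times\P^{s-1}:\sum_j a_j\nabla f_j(x)=0\}$. The first projection sends $\widetilde W$ onto the degeneracy locus $W$, of codimension $b$. I would pick a component $W_0$ of $W$ of dimension $n-b$ and an irreducible subvariety $\widetilde W_0\subseteq\widetilde W$ dominating $W_0$, so that $\dim\widetilde W_0\ge n-b$; since the second projection maps $\widetilde W_0$ into $\P^{s-1}$, its image has dimension at most $s-1$, and hence a fibre over a general point of that image has dimension at least $(n-b)-(s-1)$. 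For the corresponding nonzero $a$ this fibre lies in $\Sigma_a:=\{x:\nabla(\sum_j a_j f_j)(x)=0\}$, so $\brk(\sum_j a_j f_j)=\codim\Sigma_a\le b+s-1$. Applying the single-form bound to $g=\sum_j a_j f_j$ then gives, by definition of the collective strength as a minimum, $\ol\str(f_1,\ldots,f_s)\le\ol\str(g)\le(d-1)\brk(g)\le(d-1)(b+s-1)$, which is the claimed inequality.

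It remains to prove $\ol\str(f)\le(d-1)\brk(f)$ for a single form $f$ of degree $d\ge 2$, and this is the step I expect to be the real obstacle. Over $\ol K$, Euler's identity $df=\sum_i x_i\partial_i f$ (valid since $\ch K=0$) shows $f$ lies in the Jacobian ideal $J=(\partial_1 f,\ldots,\partial_n f)$, which is generated in degree $d-1$ and satisfies $\codim V(J)=\brk(f)=:c$. Rewriting Euler's identity in a basis of $J_{d-1}$ already yields $\ol\str(f)\le\dim_{\ol K}J_{d-1}$, the number of essential variables of $f$, but this can be as large as $n$; the point is to beat it using $\codim V(J)=c$. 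One natural route is to take $c$ general linear combinations of the partials: they cut out a complete intersection of codimension $c$ containing $V(J)$, but with extra ``spurious'' components on which $f$ need not vanish, so $f$ is not in the ideal they generate, and one must adjoin further generators to kill those components. The heart of the matter is to bound the number of generators needed by $(d-1)c$; this is where I would expect the factor $d-1$ to enter, via a Bézout-type control on the complete intersection, whose defining scheme has degree $(d-1)^c$. This single-form estimate is precisely the main technical content of \cite{KLP}; the two reductions above are routine by comparison.
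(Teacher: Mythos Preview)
The paper does not supply its own proof of this lemma: it is quoted from \cite{KLP} and used as a black box. So there is nothing in the paper to compare your argument against.

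That said, your write-up is sound on the parts you actually argue. The left inequality is the standard one-line observation you give. Your reduction of the right inequality to the single-form case via the incidence variety $\widetilde W\subset \A^n\times\P^{s-1}$ is correct: projecting first to $\A^n$ gives $\dim\widetilde W_0\ge n-b$, and then a generic fibre of the second projection has dimension at least $(n-b)-(s-1)$, which sits inside $\Sigma_a$ and yields $\brk(\sum_j a_jf_j)\le b+s-1$; taking the minimum over $a$ finishes. You then correctly isolate the single-form inequality $\ol\str(f)\le(d-1)\brk(f)$ as the substantive input and defer to \cite{KLP} for it. That is exactly the right diagnosis: this bound is the theorem of that paper, and your heuristic about adjoining generators to a generic complete intersection inside the Jacobian ideal points in the direction of their argument without constituting a proof. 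In short, your reductions are fine, and the one genuine gap you flag is precisely the content being cited.
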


A corresponding result holds for relative strength. Before stating it, we note some simple consequences of the definition of strong towers.

\begin{claim}\label{str-shuffle} \mbox{}
    \begin{enumerate}
        \item Given $A,B$ there exist $C = C(A,B),D = D(A,B)$ such that if $\cF$ is $(C,D,r)$-strong then it is $(A(r+s_h)^B,B,A(r+s_h)^B)$-strong.
        \item If $\cF$ is $(A,B,r+|\cG|)$-strong and $\cG \mod (\cF)$ is $(A,B,r)$-strong then the tower $(\cF,\cG)$ is $(A,B,r)$-strong (the notation indicates that the layers of $\cG$ are placed above those of $\cF$).
    \end{enumerate}
\end{claim}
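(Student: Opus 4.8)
The plan is to prove both parts by unwinding the definition of an $(A,B,r)$-strong tower one layer at a time; there is no geometry involved, only bookkeeping with the parameters, plus one standard base-change fact for relative strength. That fact is the following: for homogeneous ideals $I\subseteq J$ of a standard graded algebra with $J = I + (g_1,\ldots,g_k)$ and a form $f$, the relative strength $\str_J(f)$ equals the relative strength of the image $\ol{f}$ of $f$ modulo the ideal generated by the images $\ol{g}_1,\ldots,\ol{g}_k$ in the quotient $R/I$. This is immediate from $R/J\cong (R/I)/(\ol{g}_1,\ldots,\ol{g}_k)$, since a length-$t$ expression $f = \sum p_\ell q_\ell \bmod J$ as a sum of products of lower-degree forms is literally the same datum as an expression $\ol{f} = \sum \ol{p}_\ell\,\ol{q}_\ell$ modulo $(\ol{g}_1,\ldots,\ol{g}_k)$ in $R/I$ (and likewise for the collective relative strength of a tuple, which is a minimum of such quantities over nonzero linear combinations).

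For part (2), write $\cF = (\cF_1,\ldots,\cF_h)$ with $|\cF_i| = s_i$ and $\cG = (\cG_1,\ldots,\cG_{h'})$, so that the concatenated tower $(\cF,\cG)$ has layers $\cF_1,\ldots,\cF_h,\cG_1,\ldots,\cG_{h'}$, and I would verify the strength inequality separately for the $\cF$-layers and the $\cG$-layers. For the $i$-th layer with $i\le h$, the forms strictly below it in $(\cF,\cG)$ are exactly $\cF_{<i}$ and the layers of height $\ge i$ are $\cF_i,\ldots,\cF_h$ together with all of $\cG$, so the inequality to be checked is $\str_{\cF_{<i}}(\cF_i) > A(s_i+\ldots+s_h+|\cG|+r)^B$, which is precisely the $i$-th instance of the hypothesis that $\cF$ is $(A,B,r+|\cG|)$-strong. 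For the layer $\cG_{i'}$, the forms strictly below it are $\cF$ together with $\cG_{<i'}$, and the base-change fact gives
\[
\str_{(\cF,\cG_{<i'})}(\cG_{i'}) = \str_{\,\cG_{<i'}\bmod(\cF)}\bigl(\cG_{i'}\bmod(\cF)\bigr);
\]
since the $\cG$-layers above $\cG_{i'}$ are the same inside $(\cF,\cG)$ as inside $\cG$ itself, the required lower bound $> A(|\cG_{i'}|+\ldots+|\cG_{h'}|+r)^B$ is exactly the $i'$-th instance of the hypothesis that $\cG\bmod(\cF)$ is $(A,B,r)$-strong. Combining the two cases shows that $(\cF,\cG)$ is $(A,B,r)$-strong.

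For part (1), set $\sigma_i = s_i+\ldots+s_h$ and $r' = A(r+s_h)^B$, and assume without loss of generality that $B\ge 1$ (when $B<1$ the target bound on $\str$ is only smaller, so the $B=1$ constants already work). Since $s_h\le\sigma_i$ for every $i$, we have $r'\le A(\sigma_i+r)^B$, and since $\sigma_i\le \sigma_i+r\le(\sigma_i+r)^B$ we get $\sigma_i+r'\le(1+A)(\sigma_i+r)^B$; hence
\[
r'(\sigma_i+r')^B \le A(\sigma_i+r)^B\cdot(1+A)^B(\sigma_i+r)^{B^2} = A(1+A)^B(\sigma_i+r)^{B+B^2}.
\]
So with $C := A(1+A)^B$ and $D := B+B^2$, the hypothesis that $\cF$ is $(C,D,r)$-strong yields $\str_{\cF_{<i}}(\cF_i) > C(\sigma_i+r)^D \ge r'(\sigma_i+r')^B$ for each $i$, i.e. $\cF$ is $(r',B,r')$-strong, as claimed.

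I do not expect a genuine obstacle here. The only point requiring real care is stating the base-change identity correctly and, throughout part (2), keeping straight which quotient ring the relative strength is being measured in — precisely the ring that changes as one moves up a tower.
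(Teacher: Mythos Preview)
Your proof is correct and follows essentially the same route as the paper. For part (1) you obtain the identical exponent $D=B+B^{2}$ via the same chain of elementary inequalities (with constant $A(1+A)^{B}$ in place of the paper's $2^{B}A^{B+1}$), and for part (2) you spell out in full the layer-by-layer check that the paper compresses into ``follows immediately from the definitions,'' including the base-change identity for relative strength that justifies matching the $\cG$-layer inequalities with the hypothesis on $\cG\bmod(\cF)$.
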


\begin{proof}
    The second statement follows immediately from the definitions. For the first statement, since 
    \begin{align*}
        A(r+s_h)^B(s_i+\ldots+s_h+A(r+s_h)^B)^B &\le A(r+s_h)^B(2A(s_i+\ldots+s_h+r)^B)^B \\
        &\le 2^B A^{B+1} (s_i+\ldots+s_h+r)^{B^2+B},
    \end{align*}
    we can take $C = 2^B A^{B+1}$ and $D = B^2+B$.
\end{proof}

Relative strength and singularities are closely related as follows.  

\begin{lemma}[Lampert \cite{L-small}]\label{lem:str-reg}
    If $\cF$ is absolutely $(A,B,r)$-strong then 
    \[
    \codim_{Z(\cF_{<h})} Z(\cF_{<h})\cap S(\cF) > r,
    \]
    where $S(\cF) = (x: \nabla f_{i,j} \textnormal{ are linearly dependent})$. 
\end{lemma}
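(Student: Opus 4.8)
The plan is to stratify the set $Z(\cF_{<h})\cap S(\cF)$ — all varieties and codimensions taken over $\overline{K}$ — according to the smallest layer at which the gradients of the tower first become linearly dependent, and to bound the codimension of each stratum separately. Two inputs are needed, both extracted from the strength hypothesis. \textbf{(i)} For every $i\in[h]$ the variety $Z(\cF_{<i})\subset\A^n$ is a complete intersection of codimension $s_1+\ldots+s_{i-1}$; in particular it is pure-dimensional, and its smooth locus is precisely the set of points at which the gradients $\{\nabla f_{k,j}:k<i\}$ are linearly independent. \textbf{(ii)} Writing $\Sigma_i\subseteq Z(\cF_{<i})$ for the locus at which the combined gradients $\{\nabla f_{k,j}:k\le i\}$ fail to be linearly independent — so $\Sigma_i$ contains $\textnormal{Sing}\,Z(\cF_{<i})$, and away from the latter it is the common singular locus of the restrictions $f_{i,j}|_{Z(\cF_{<i})}$ — one has the relative analogue of the right-hand inequality of Lemma~\ref{lem:str-brk}:
\[
\codim_{Z(\cF_{<i})}\Sigma_i\ \ge\ \frac{1}{d-1}\,\ol{\str}_{\cF_{<i}}(\cF_i)\,-\,s_i+1,
\]
where $\ol{\str}_{\cF_{<i}}(\cF_i)$ is the relative strength of $\cF_i$ modulo $\cF_{<i}$ computed over $\overline{K}$. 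For $i=1$, where $Z(\cF_{<1})=\A^n$, $\Sigma_1=S(\cF_1)$ and $\codim S(\cF_1)=\brk(\cF_1)$, this is exactly Lemma~\ref{lem:str-brk}.

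Granting (i) and (ii), the rest is short. Given $x\in Z(\cF_{<h})\cap S(\cF)$, let $i\in[h]$ be least such that $\{\nabla f_{k,j}(x):k\le i\}$ are linearly dependent; by minimality $x$ lies in the smooth locus of $Z(\cF_{<i})$ and in $\Sigma_i$, so
\[
Z(\cF_{<h})\cap S(\cF)\ =\ \bigcup_{i=1}^{h}\big(Z(\cF_{<h})\cap\Sigma_i\big),
\]
and it suffices to bound each term by more than $r$. Since $Z(\cF_{<h})$ is carved out of $Z(\cF_{<i})$ by the $s_i+\ldots+s_{h-1}$ forms of $\cF_i,\ldots,\cF_{h-1}$, and both are pure-dimensional complete intersections by (i), a standard codimension count (using $W\subseteq\Sigma_i$ and $Z(\cF_{<h})\subseteq Z(\cF_{<i})$) gives
\[
\codim_{Z(\cF_{<h})}\big(Z(\cF_{<h})\cap\Sigma_i\big)\ \ge\ \codim_{Z(\cF_{<i})}\Sigma_i\,-\,(s_i+\ldots+s_{h-1}).
\]
Feeding in (ii) and the hypothesis $\ol{\str}_{\cF_{<i}}(\cF_i)>A(s_i+\ldots+s_h+r)^B$, and abbreviating $\sigma=s_i+\ldots+s_h$, the right-hand side is at least $\tfrac{A}{d-1}(\sigma+r)^B-2\sigma+1$. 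This exceeds $r$ provided $B\ge 2$ and $A\ge 3(d-1)$, since then $\tfrac{A}{d-1}(\sigma+r)^B\ge 3(\sigma+r)^2\ge 3(\sigma+r)\ge 2\sigma+r$; as enlarging $A$ or $B$ only strengthens the hypothesis, we may assume this. Hence $\codim_{Z(\cF_{<h})}(Z(\cF_{<h})\cap\Sigma_i)>r$ for every $i$, which is the assertion.

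The main obstacle is input (ii): the \emph{lower} bound on $\codim\Sigma_i$ in terms of the relative strength of $\cF_i$. Its absolute counterpart is the nontrivial (Kazhdan--Lampert--Polishchuk) half of Lemma~\ref{lem:str-brk}; the elementary inequality $\brk\le 2\,\ol{\str}$ points the wrong way and is of no help. One must run the KLP argument relative to the ideal $(\cF_{<i})$, using the complete intersection structure of $Z(\cF_{<i})$ to control the differentials of the restricted forms $f_{i,j}|_{Z(\cF_{<i})}$ — this is the heart of the matter and is carried out in \cite{KLP, L-small}. Input (i) is comparatively soft: by induction on $i$, Krull's height theorem bounds the codimension of each component of $Z(\cF_{<i})$ by $s_1+\ldots+s_{i-1}$, while a component of strictly smaller codimension, being cut out of the complete intersection $Z(\cF_{<i-1})$ by the $s_{i-1}$ forms $\cF_{i-1}$ in more than the expected dimension, would necessarily lie in $\Sigma_{i-1}$; this forces $\codim_{Z(\cF_{<i-1})}\Sigma_{i-1}\le s_{i-1}-1$, contradicting (ii) and the strength hypothesis once $A,B$ are large.
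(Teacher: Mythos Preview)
The paper does not prove this lemma; it is quoted from \cite{L-small} without argument, so there is no in-paper proof to compare against. Judged on its own, your outline is sound. The stratification of $Z(\cF_{<h})\cap S(\cF)$ by the least layer $i$ at which the gradients $\{\nabla f_{k,j}:k\le i\}$ first become dependent is correct (any dependence among all gradients forces one among an initial segment, and conversely), and the codimension bookkeeping passing from $Z(\cF_{<i})$ down to $Z(\cF_{<h})$ is the obvious complete-intersection count. You rightly isolate the substantive step as input~(ii), the relative form of the Kazhdan--Lampert--Polishchuk inequality bounding $\codim_{Z(\cF_{<i})}\Sigma_i$ from below by a linear function of $\ol{\str}_{\cF_{<i}}(\cF_i)$, and you correctly attribute this to \cite{L-small}---which is precisely the reference the paper invokes for the whole lemma. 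Your inductive derivation of input~(i) from~(ii) is also the standard argument (and matches the paper's own proof of Lemma~\ref{lem:str-prime}). Finally, your remark that one may assume $A\ge 3(d-1)$, $B\ge 2$ reflects the intended reading of the lemma: the constants are implicitly taken large in terms of the degree data, as is clear from how the result is used downstream. In short, you have unpacked the black-box citation into its soft reduction plus its hard core, which is more detail than the paper itself provides.
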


As a consequence, we have: 

\begin{lemma}\label{lem:str-prime}
    There exist $C,D$ such that if $\cF$ is absolutely $(C,D,1)$-strong then $Z(\cF)$ is a complete intersection and $(\cF)$ is prime.
\end{lemma}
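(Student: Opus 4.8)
The plan is to deduce this from Lemma \ref{lem:str-reg} together with standard facts from commutative algebra, choosing $C, D$ large enough that the relevant codimension bounds become strong enough to apply the Serre-type criteria. First I would unwind what Lemma \ref{lem:str-reg} gives when $r$ is taken to be a sufficiently large (but fixed) constant rather than $1$: using Claim \ref{str-shuffle}(1), an $(C,D,1)$-strong tower with $C,D$ large is in particular $(C_0, D_0, r_0)$-strong for any prescribed constants, so we may assume the hypothesis of Lemma \ref{lem:str-reg} with $r$ as large as we please. Applying Lemma \ref{lem:str-reg} layer by layer (i.e. to each truncation $\cF_{\le i}$, whose bottom layer sits over $\cF_{<i}$) we learn that at each stage the singular locus of the new equations, intersected with the previous variety, has large codimension in that previous variety. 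An inductive argument on the number of layers then shows that $Z(\cF)$ has the expected dimension $n - (s_1 + \cdots + s_h)$ and that its singular locus has codimension $> r$ inside it; in particular $Z(\cF)$ is a set-theoretic (indeed scheme-theoretic) complete intersection of the expected codimension.

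Next I would upgrade "expected dimension" to "complete intersection" in the scheme-theoretic sense: since the forms $f_{i,j}$ are $n - (s_1+\cdots+s_h)$ equations cutting out a scheme of exactly that codimension in the Cohen–Macaulay ring $K[x_1,\dots,x_n]$, they form a regular sequence, so $R/(\cF)$ is Cohen–Macaulay and in particular satisfies Serre's condition $S_2$ (in fact $S_k$ for all $k$). For primeness I would invoke Serre's criterion: a Noetherian ring is a product of normal domains iff it satisfies $R_1$ and $S_2$, and it is a domain once one also knows $\Spec$ is connected. The $S_2$ (even $S_k$) condition is automatic from the complete intersection property just established. The $R_1$ condition — regularity in codimension $1$ — follows from the singular-locus bound: Lemma \ref{lem:str-reg} with $r$ large forces the singular locus of $Z(\cF)$ to have codimension much bigger than $1$, so $R/(\cF)$ is regular in codimension $\le r$; taking $r \ge 2$ already suffices for $R_1$ and normality. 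Connectedness of $Z(\cF)$ follows because it is a complete intersection in $\A^n$ of positive dimension cut out by homogeneous forms, hence its projectivization is a positive-dimensional complete intersection in projective space, which is connected; therefore the affine cone is connected, and a connected normal scheme is irreducible and its coordinate ring a domain — so $(\cF)$ is prime.

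The main obstacle I anticipate is bookkeeping the induction over the layers cleanly: Lemma \ref{lem:str-reg} is stated only for the top layer relative to everything below it, so to control the singular locus and dimension of the full $Z(\cF)$ one must apply it to each truncation and patch the estimates together, being careful that the codimension bounds degrade in a controlled way and that "$\nabla f_{i,j}$ linearly dependent" at a point of $Z(\cF)$ really does detect non-regularity of the whole system there (this is where the complete-intersection Jacobian criterion enters, and it needs the expected-dimension fact already in hand). A secondary subtlety is that $R_1 + S_2$ only gives a finite product of normal domains, so the connectedness argument genuinely has to be made — but for homogeneous complete intersections of positive dimension this is standard. Once these pieces are assembled, choosing $C$ and $D$ large enough to feed Claim \ref{str-shuffle}(1) and guarantee $r \ge 2$ (say) at every layer completes the proof.
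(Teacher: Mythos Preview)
Your approach is essentially the paper's: induct on the height, boost the strength parameter via Claim \ref{str-shuffle}, control the singular locus with Lemma \ref{lem:str-reg} to get both the complete-intersection statement and the codimension-$>1$ bound on the singular locus, and then appeal to the Serre-type criterion (packaged in the paper as Corollary \ref{cor:reg-prime}). One small point: your connectedness argument as written needs $\dim Z(\cF)\ge 2$, not merely positive, for the projectivization to be a positive-dimensional projective complete intersection; the paper sidesteps this by observing that a graded $K$-algebra which is a product of domains is automatically a single domain, since $R_0=K$ has no nontrivial idempotents.
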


The proof of the above lemma relies on a standard criterion for primeness of ideals (see \cite[Theorem 18.15 and Proposition 18.13]{Eisenbud}).

\begin{lemma}
    Suppose that $f_1,\ldots,f_s$ are polynomials which cut out a complete intersection $Z = (f_1=\ldots = f_s = 0)\subset \A^n$ and let  $R = K[x_1,\ldots,x_n]/(f_1,\ldots,f_s)$ be its coordinate ring. If $\codim_Z Z\cap S(\ul{f}) >1$ then $R$ is a direct product of domains.
\end{lemma}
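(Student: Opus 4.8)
The plan is to deduce this from Serre's criterion for normality: a Noetherian ring is normal (i.e.\ reduced and integrally closed in its total ring of fractions) precisely when it satisfies Serre's conditions $(R_1)$ and $(S_2)$ (this is the content of \cite{Eisenbud}), and a normal Noetherian ring is automatically a finite direct product of normal domains. Everything then reduces to checking that $R$ satisfies $(R_1)$ and $(S_2)$.

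The condition $(S_2)$ is immediate. Since $f_1,\ldots,f_s$ cut out a complete intersection in $\A^n$, the quotient $R$ has dimension exactly $n-s$, so the $f_i$ form a regular sequence (a subsystem of parameters in the Cohen–Macaulay ring $K[x_1,\ldots,x_n]$), and hence $R$ is Cohen–Macaulay. In particular $R$ satisfies $(S_k)$ for every $k$, and has no embedded primes. For $(R_1)$, recall that $S(\ul f)$ is the locus where the Jacobian matrix $(\partial f_i/\partial x_j)$ has rank $<s$. Because $\ch K = 0$ the field is perfect, so the Jacobian criterion applies at every (not necessarily closed) point of $Z$: for a prime $\mathfrak p$ of $R$, the local ring $R_\mathfrak p$ is regular if and only if the corresponding point of $Z$ does not lie in $S(\ul f)$. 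Thus the non-regular locus of $\operatorname{Spec} R$ is exactly $Z\cap S(\ul f)$, and the hypothesis $\codim_Z(Z\cap S(\ul f)) > 1$ says precisely that this locus has codimension at least $2$; equivalently, $R_\mathfrak p$ is regular for every prime $\mathfrak p$ of height $\le 1$, which is $(R_1)$. In particular $(R_0)$ holds, so $R$ is generically reduced, and together with the absence of embedded primes this forces $R$ to be reduced.

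By Serre's criterion, $R$ is therefore a normal Noetherian ring. To conclude, observe that no two irreducible components of $\operatorname{Spec} R$ can share a point: at such a point the local ring would be a normal local ring that is not a domain, contradicting the standard fact that normal local rings are domains. Hence $\operatorname{Spec} R$ is the disjoint union of the closed subschemes cut out by its finitely many minimal primes $\mathfrak p_1,\ldots,\mathfrak p_k$, and correspondingly $R \cong \prod_{i=1}^{k} R/\mathfrak p_i$ with each factor a normal domain; in particular $R$ is a direct product of domains.

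The one point requiring genuine care is the identification of $S(\ul f)\cap Z$ with the non-regular locus of $\operatorname{Spec} R$. This is the Jacobian criterion, and it makes essential use both of the complete-intersection hypothesis (so that $Z$ carries its expected codimension and the vanishing/non-vanishing of Jacobian minors reflects the true singularities rather than embedded or excess-dimension phenomena) and of the characteristic-zero hypothesis (perfectness of $K$, so that regularity and smoothness coincide and the criterion is valid at all primes, not just closed points). The remaining steps are direct invocations of Serre's criterion and of the structure theory of normal Noetherian rings.
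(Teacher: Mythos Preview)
Your proof is correct and is precisely the argument the paper has in mind: the paper does not give a proof but simply cites \cite[Theorem 18.15 and Proposition 18.13]{Eisenbud}, which is Serre's criterion for normality together with the Cohen--Macaulay property of complete intersections, and your write-up is a faithful expansion of that citation. One minor remark: you only need the inclusion of the non-regular locus in $Z\cap S(\ul f)$ (points with full-rank Jacobian are regular), not the reverse inclusion, so the Jacobian criterion is being used in its easy direction.
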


\begin{corollary}\label{cor:reg-prime}
    Under the hypotheses of the previous lemma, if $f_1,\ldots,f_s$ are homogeneous, then $(f_1,\ldots,f_s)$ is prime.
\end{corollary}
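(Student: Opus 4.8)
The plan is to use the homogeneity of the $f_i$ to upgrade the conclusion of the previous lemma---that $R:=K[x_1,\ldots,x_n]/(f_1,\ldots,f_s)$ is a finite direct product of domains---to the assertion that $R$ is itself a domain. Since the $f_i$ are forms, $R$ is graded with $R_0=K$, and $R\neq 0$ because every $f_i$ vanishes at the origin. Writing $R\cong R_1\times\cdots\times R_k$ with each $R_j$ a nonzero domain, it suffices to prove $k=1$, and this follows once we know that $R$ has no idempotents besides $0$ and $1$, since a product of $k\geq 2$ nonzero rings always contains the nontrivial idempotent $(1,0,\ldots,0)$.

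Thus the one substantive point is the elementary fact that a nonzero graded ring whose degree-zero part is a field has only trivial idempotents. I would prove this by decomposing an idempotent into homogeneous components, $e=e_0+e_1+\cdots$. Comparing degree-zero parts of $e^2=e$ gives $e_0^2=e_0$ in the field $K=R_0$, so $e_0\in\{0,1\}$; replacing $e$ by $1-e$ if necessary, we may assume $e_0=0$, so that $e$ lies in the irrelevant ideal $R_+=\bigoplus_{i>0}R_i$. But then $e=e^m$ shows $e\in R_+^m$ for every $m$, and since $R_+^m$ is concentrated in degrees $\geq m$, each homogeneous component of $e$ must vanish. Hence $e=0$.

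Combining these two steps, $R$ is a domain, which is exactly the assertion that the ideal $(f_1,\ldots,f_s)$ is prime. Geometrically, the extra input over the previous lemma is just that $Z$ is a cone, hence connected: if $x\in Z$ then $\lambda x\in Z$ for all $\lambda$ since $f_i(\lambda x)=\lambda^{\deg f_i}f_i(x)$, so $Z$ is a union of lines through the origin, and this connectedness forces the product decomposition above to be trivial. I do not expect any real obstacle here; the corollary is a soft consequence of the lemma together with the grading.
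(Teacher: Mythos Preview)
Your proposal is correct and follows exactly the same route as the paper's proof, which simply asserts that a graded ring over a field that is a direct product of domains must itself be a domain. You have just filled in the standard details (no nontrivial idempotents in a connected graded ring) that the paper leaves implicit.
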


\begin{proof}
    By the lemma, $R$ is a direct product of domains. Since it is also a graded ring over a field, it must be a domain. 
\end{proof}

\begin{proof}[Proof of lemma \ref{lem:str-prime}]
    
    The proof is by induction on $h$, where the base case $h=0$ is trivial. Suppose the statement holds for towers of height $< h$ and let $\cF$ be a tower of height $h$. By claim \ref{str-shuffle}, we may assume that $\cF$ is $(A,B,s_h+1)$-regular, after perhaps increasing $C,D$.  
    
    We first prove that $Z(\cF)$ is a complete intersection.
    Suppose, to get a contradiction, that $X\subseteq Z(\cF)$ is an irreducible component of codimension $<s.$ Then  $X\subseteq S$. Writing $Z = Z(\cF)$ and $Z_{<h} = Z(\cF_{<h})$,  We obtain the contradiction
    \[
    s_h+1 < \codim_{Z_{<h}} S\cap Z_{<h} \le \codim_{Z_{<h}} X < s_h.
    \]

    Now we move on to primeness of $(\cF)$. By lemma \ref{lem:str-reg} we have
    \[
    \codim_Z Z\cap S = \codim_{Z_{<h}} Z\cap S -s_h \ge \codim_{Z_{<h}} Z_{<h} \cap S -s_h > 1.  
    \]
    By corollary \ref{cor:reg-prime}, $(\cF)$ is indeed prime. 
\end{proof}

\subsection{Taylor expansion and restriction to subspaces} We now recall the multi-homogeneous Taylor expansion for forms, which will be used to investigate their restriction to subspaces. 

\begin{definition}
    The \emph{Taylor expansion} of a  degree $d$ form $f$ is given by 
    $$f(x_1+\ldots+x_m) = \sum_{|e|=d} f^e(x_1,\ldots,x_m),$$ where each $f^e$ is multi-homogeneous of multi-degree $e$.
\end{definition}

\begin{lemma}\label{lem:Taylor-identities} 
    The terms of the Taylor expansion satisfy
    \[
    f^e(x,\ldots,x) = \binom{d}{e_1,\ldots,e_m} f(x).
    \]
        
        
\end{lemma}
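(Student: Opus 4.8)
The plan is to extract the term $f^e$ from the Taylor expansion by introducing scaling parameters and then combining the homogeneity of $f$ with the multinomial theorem. First I would introduce indeterminates $t_1,\ldots,t_m$ and apply the defining identity of the Taylor expansion to the vectors $t_1x_1,\ldots,t_mx_m$. Since $f^e$ is multi-homogeneous of multi-degree $e=(e_1,\ldots,e_m)$, this yields the identity
\[
f(t_1x_1+\ldots+t_mx_m)=\sum_{|e|=d}t_1^{e_1}\cdots t_m^{e_m}\,f^e(x_1,\ldots,x_m)
\]
in $K[t_1,\ldots,t_m]$, with coefficients in the ring of forms in the $x_i$. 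In other words, the $f^e(x_1,\ldots,x_m)$ are precisely the coefficients of the monomials $t_1^{e_1}\cdots t_m^{e_m}$ in this expansion, so the lemma will follow by specializing both sides and comparing these coefficients.

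Next I would substitute $x_1=\ldots=x_m=x$. The left-hand side becomes $f\big((t_1+\ldots+t_m)x\big)=(t_1+\ldots+t_m)^d f(x)$, using that $f$ is homogeneous of degree $d$, while the right-hand side becomes $\sum_{|e|=d}t_1^{e_1}\cdots t_m^{e_m}\,f^e(x,\ldots,x)$. Expanding $(t_1+\ldots+t_m)^d$ by the multinomial theorem gives $\sum_{|e|=d}\binom{d}{e_1,\ldots,e_m}t_1^{e_1}\cdots t_m^{e_m}$, and comparing the coefficient of $t_1^{e_1}\cdots t_m^{e_m}$ on the two sides yields $f^e(x,\ldots,x)=\binom{d}{e_1,\ldots,e_m}f(x)$, as claimed.

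There is no serious obstacle here; the only point worth a word is the legitimacy of the coefficient-comparison step, i.e. that the displayed expansion really is the decomposition of $f(t_1x_1+\ldots+t_mx_m)$ into monomials in the $t_i$. This is immediate from uniqueness of the multi-homogeneous decomposition: a form in the variables $t_ix_i$ splits uniquely into pieces homogeneous of each prescribed degree $e_i$ in the block $t_ix_i$, and such a piece is exactly $t_1^{e_1}\cdots t_m^{e_m}$ times a form multi-homogeneous of multi-degree $e$ in the $x_i$. (Alternatively one can recover $f^e$, up to the factorial normalization, by applying $\partial_{t_1}^{e_1}\cdots\partial_{t_m}^{e_m}$ and setting $t=0$; the same homogeneity computation then gives the identity.) No use is made of the characteristic-zero hypothesis — the statement is a formal polynomial identity, with $\binom{d}{e_1,\ldots,e_m}$ interpreted in $K$.
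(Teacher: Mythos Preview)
Your proof is correct and follows essentially the same approach as the paper: introduce scaling parameters $t_1,\ldots,t_m$, use multi-homogeneity to factor out the monomial $t_1^{e_1}\cdots t_m^{e_m}$, and then compare coefficients against the multinomial expansion of $(t_1+\ldots+t_m)^d f(x)$. The only cosmetic difference is that the paper substitutes $x_i=x$ from the outset rather than first writing the general identity in the $x_i$ and then specializing.
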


\begin{proof}
    Compute the coefficient of $t_1^{e_1}\ldots t_m^{e_m}$ in the expression $f(t_1x+\ldots+t_m x)$ in two different ways. On the one hand, $f(t_1x+\ldots+t_m x) = (t_1+\ldots+t_m)^d f(x)$ so this coefficient is $\binom{d}{e_1,\ldots,e_m} f(x)$. On the other hand, 
    \[
    f(t_1x+\ldots+t_m x) = \sum_{|\alpha| = d} f^\alpha (t_1 x,\ldots,t_m x) = \sum_{|\alpha| = d} t_1^{\alpha_1} \ldots t_m^{\alpha_m} f^\alpha (x,\ldots,x),
    \]
    so this coefficient equals $f^e (x,\ldots,x)$.
\end{proof}

\begin{definition}
    Let $V = K^n$ and $\cF$  a tower in $K[V]$. For any positive integer  $m$ we have a tower $T_m(\cF)$ in $K[V^m]$ where the elements of $(T_m(\cF))_i$ are given by the terms of the Taylor expansion of the elements of $\cF_i$. Note that the forms in $T_m(\cF)_i$ have the same degree as those in $\cF_i$.  
\end{definition}

\begin{lemma}\label{lem:Taylor-strong}
    If $\cF$ is $(Am^{dB},B,r)$-strong then $T_m(\cF)$ is $(A,B,r)$-strong.
\end{lemma}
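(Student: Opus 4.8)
The plan is to compare $T_m(\cF)$ with $\cF$ one layer at a time, using the Taylor identity of Lemma \ref{lem:Taylor-identities} to collapse Taylor terms back to the original forms, and to combine this with a crude count of how many Taylor terms each layer produces. Write $V=K^n$ and, for a scalar vector $\tau=(\tau_1,\dots,\tau_m)\in K^m$, let $\phi_\tau\colon K[V^m]\to K[V]$ be the $K$-algebra map sending the $k$-th block of coordinates $x_k$ to $\tau_k y$ (here $y$ denotes the coordinates on the target $V$). It sends a form homogeneous of degree $\delta$ in the $x$-variables to one homogeneous of degree $\delta$ in $y$ (possibly $0$), and by multihomogeneity of $f^e$ together with Lemma \ref{lem:Taylor-identities} it acts on a Taylor term by $\phi_\tau(f^e)=\tau^e f^e(y,\dots,y)=\binom{\delta}{e}\tau^e f(y)$, where $\binom{\delta}{e}$ is the multinomial coefficient.

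First I would dispose of the count. The layer $T_m(\cF)_i$ consists of all Taylor terms of the $s_i$ forms of $\cF_i$, hence has $t_i=s_i\binom{m+d_i-1}{d_i}\le s_i m^{d}$ elements (the bound $\binom{m+d-1}{d}\le m^{d}$ holds for $m,d\ge 1$ by a one-line induction, and $m=1$ is trivial since then $T_1(\cF)=\cF$). Summing over the layers from $i$ up, $t_i+\dots+t_h+r\le m^{d}(s_i+\dots+s_h+r)$. So it suffices to prove $\str_{T_m(\cF)_{<i}}(T_m(\cF)_i)\ge \str_{\cF_{<i}}(\cF_i)$ for each $i$: this then yields $\str_{T_m(\cF)_{<i}}(T_m(\cF)_i)>Am^{dB}(s_i+\dots+s_h+r)^B=A\big(m^{d}(s_i+\dots+s_h+r)\big)^B\ge A(t_i+\dots+t_h+r)^B$, which is exactly the assertion that $T_m(\cF)$ is $(A,B,r)$-strong.

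The heart is that layerwise inequality. Take a nonzero $K$-combination $g=\sum_{j,e}b_{j,e}f_{i,j}^e$ of the Taylor terms in layer $i$ together with a shortest representation $g=\sum_{k=1}^{t}p_kq_k \mod (T_m(\cF)_{<i})$, where $t=\str_{T_m(\cF)_{<i}}(g)$ and the $p_k,q_k$ have degree $<d_i$. Apply $\phi_\tau$. Since $\phi_\tau$ carries each generator $f_{i',j'}^{e'}$ of $(T_m(\cF)_{<i})$ to a scalar multiple of $f_{i',j'}\in\cF_{<i}$, we get $\phi_\tau\big((T_m(\cF)_{<i})\big)\subseteq(\cF_{<i})$, so $\phi_\tau(g)=\sum_k\phi_\tau(p_k)\phi_\tau(q_k)\mod(\cF_{<i})$ with all factors of degree $<d_i$ (discard any term in which a factor became $0$), and hence $\str_{\cF_{<i}}(\phi_\tau(g))\le t$. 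On the other hand $\phi_\tau(g)=\sum_j c_j(\tau)f_{i,j}$ with $c_j(\tau)=\sum_e\binom{d_i}{e}\tau^e b_{j,e}$. Choosing $(j_0,e_0)$ with $b_{j_0,e_0}\ne 0$, the coefficient of $\tau^{e_0}$ in $c_{j_0}$ equals $\binom{d_i}{e_0}b_{j_0,e_0}\ne 0$ (characteristic zero), so $c_{j_0}$ is a nonzero polynomial in $\tau$; since $K$ is infinite I may pick $\tau$ with $c_{j_0}(\tau)\ne 0$. Then $\phi_\tau(g)$ is a $K$-combination of $\cF_i$ with not all coefficients zero, so $\str_{\cF_{<i}}(\cF_i)\le\str_{\cF_{<i}}(\phi_\tau(g))\le t$. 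Minimizing over $g$ gives the inequality, and combining with the count proves the lemma.

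The one point that needs care — and which I would call the main obstacle, modest as it is — is the last part of the layerwise inequality: arranging that the specialization $\phi_\tau(g)$ is a \emph{genuinely nontrivial} combination of $\cF_i$ rather than something that collapses to zero. This is exactly where characteristic zero (so the multinomial coefficients $\binom{d_i}{e}$ survive) and the infinitude of $K$ (to choose $\tau$ off the zero set of $c_{j_0}$) come in; both are in force throughout the paper. Everything else — the estimate on the number of Taylor terms and the verification that $\phi_\tau$ respects the relevant ideal containment and degree bounds — is routine bookkeeping.
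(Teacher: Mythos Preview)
Your argument is correct and follows essentially the same route as the paper: both use the diagonal specialization $x\mapsto(\tau_1 x,\dots,\tau_m x)$ together with Lemma~\ref{lem:Taylor-identities} to push a nontrivial combination of Taylor terms down to a nontrivial combination of $\cF_i$, and both combine this with the crude bound $|T_m(\cF)_i|\le m^d s_i$. Your write-up is slightly more explicit about the ideal containment $\phi_\tau\big((T_m(\cF)_{<i})\big)\subseteq(\cF_{<i})$ and about where characteristic zero and infinitude of $K$ enter, but the substance is the same.
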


\begin{proof}
  The number of forms in $(T_m(\cF))_i$ is $\le m^d s_i$, so it's enough to show that the relative strength is at least that of $\cF$. If $\cF_i = (f_1,\ldots,f_s)$ let $g = \sum_{j\in [s], |e| = d_i} a_{j,e}f^e_j $ be a nontrivial linear combination with $\str_{T_m(\cF)_{<i}} (g) = r.$  For $\lambda = (\lambda_1,\ldots,\lambda_m) \neq 0,$ consider the diagonal embedding given by  $x\mapsto (\lambda_1 x,\ldots,\lambda_m x).$ By lemma \ref{lem:Taylor-identities} we have $f^e(\lambda_1 x,\ldots,\lambda_m x) = \binom{d}{e_1,\ldots,e_m} \lambda^e f(x)$ and thus plugging this in to the relative rank of $g$ gives us 
  \[
  \str_{\cF_{<i}} \left( \sum_j f_j(x) \sum_e a_{j,e} \binom{d}{e_1,\ldots,e_m} \lambda^e  \right) \le r. 
  \]
    Our assumption that $g$ is a nontrivial linear combination implies that for some $j$ not all $a_{j,e}$ are zero. Therefore, there exists a choice of $\lambda$ such that $\sum_e a_{j,e} \binom{d}{e_1,\ldots,e_m} \lambda^e$ is not zero, which implies that $\str_{\cF_{<i}} (\cF_i) \le r$ as desired. 
\end{proof}

For convenience, we write $Z_m(\cF)$ for $Z(T_m(\cF))$.
\begin{lemma}\label{lem:res-var}
    Let $m \ge s_h+2,$ and suppose $\cF$ is absolutely $(Cm^{dD},D,1)$-strong. Then the variety 
    \[
    X = \left( (a,\ul{v})\in \A^m\times V^m: \ul{v}\in Z_m(\cF_{<h}),\ a_1v_1+\ldots+a_mv_m \in Z(\cF_h) \right)
    \]
    is an irreducible complete intersection of codimension $|T_m(\cF_{<h})|+s_h$.
\end{lemma}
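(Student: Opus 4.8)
The plan is to analyze the incidence variety $X$ via the projection $\pi\colon X \to Z_m(\cF_{<h})$ sending $(a,\ul v)$ to $\ul v$, and to apply the earlier structural results (Lemmas `\ref{lem:str-reg}`, `\ref{lem:str-prime}`, `\ref{lem:Taylor-strong}`) to control the fibers. First I would record that, since $\cF$ is absolutely $(Cm^{dD},D,1)$-strong, Lemma `\ref{lem:Taylor-strong}` gives that $T_m(\cF)$ is absolutely $(C,D,1)$-strong; hence by Lemma `\ref{lem:str-prime}` (with $C,D$ chosen as there, which we may assume), $Z_m(\cF_{<h}) = Z(T_m(\cF_{<h}))$ is an irreducible complete intersection of codimension $|T_m(\cF_{<h})|$ in $V^m$, and similarly $Z_m(\cF_{\le h})$ is a complete intersection — these are the base and a closely related total space for the fibration. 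The target codimension for $X$ is $|T_m(\cF_{<h})| + s_h$, i.e. $X$ should have the expected dimension for the $s_h$ equations cutting out "$\sum a_i v_i \in Z(\cF_h)$" on top of $\A^m \times Z_m(\cF_{<h})$.

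Next I would study the fiber of $\pi$ over a point $\ul v \in Z_m(\cF_{<h})$: it is the linear-algebra locus $\{a \in \A^m : \sum_i a_i v_i \in Z(\cF_h)\}$, i.e. the preimage of $Z(\cF_h)$ under the linear map $\A^m \to V$, $a \mapsto \sum a_i v_i$. Generically this fiber is cut out by $s_h$ equations of degree $d_h$ in the $m$ variables $a$, so has dimension $\ge m - s_h$ everywhere, giving the inequality $\dim X \ge \dim Z_m(\cF_{<h}) + m - s_h$, i.e. $\codim X \le |T_m(\cF_{<h})| + s_h$. The reverse inequality (that no component of $X$ is too large) and irreducibility are the real content. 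For irreducibility I would use the standard fibration criterion: if the base is irreducible and the general fiber is irreducible of constant dimension, and moreover every fiber has dimension $\le$ that generic value, then the total space is irreducible of the expected dimension. So the task reduces to (i) exhibiting one $\ul v$ for which the fiber is irreducible of dimension exactly $m-s_h$ — equivalently, for which the restriction of $\cF_h$ to the generic $m$-dimensional subspace spanned by $v_1,\dots,v_m$ is itself a high-strength (hence prime, complete-intersection) system in the $a$-variables — and (ii) a uniform upper bound $\dim \pi^{-1}(\ul v) \le m - s_h$ for every $\ul v \in Z_m(\cF_{<h})$.

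Point (ii) is where I expect the main obstacle, and it is exactly what the hypothesis $m \ge s_h + 2$ together with the strength bound is designed for. The fiber over $\ul v$ has dimension $> m - s_h$ precisely when the linear map $a \mapsto \sum a_i v_i$ carries a positive-dimensional family into $Z(\cF_h)$ beyond what $s_h$ equations force — which can only happen when the span of the $v_i$ meets $Z(\cF_h)$, and in fact $Z(\cF_{<h})$, in an unexpectedly singular way; this is controlled by Lemma `\ref{lem:str-reg}`, which bounds $\codim_{Z(\cF_{<h})} \big(Z(\cF_{<h}) \cap S(\cF)\big) > 1$ (and after the shuffle of Claim `\ref{str-shuffle}`, $> s_h + 1$). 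Concretely I would argue: the locus of $\ul v \in Z_m(\cF_{<h})$ with oversized fiber maps, via generic coordinates, into the pullback of $S(\cF)$ or of the singular locus of the restricted system; using that the $v_i$ range over $m \ge s_h + 2$ independent copies, a dimension count shows this bad locus has codimension $> s_h$ in $Z_m(\cF_{<h})$, so the corresponding part of $X$ still has codimension $> |T_m(\cF_{<h})| + s_h$ — hence cannot be a component of the expected dimension, and the generic-fiber description propagates to give irreducibility. I would then assemble these pieces: base irreducible (Lemma `\ref{lem:str-prime}`), generic fiber irreducible complete intersection of dimension $m - s_h$ (same lemma applied to the restricted tower, using that generic restriction preserves strength — cf. the Taylor-expansion mechanism of Lemma `\ref{lem:Taylor-strong}`), all fibers of dimension $\le m - s_h$ (Lemma `\ref{lem:str-reg}` via the dimension count above), concluding that $X$ is an irreducible complete intersection of codimension $|T_m(\cF_{<h})| + s_h$.
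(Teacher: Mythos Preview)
Your approach projects $X$ onto $Z_m(\cF_{<h})$; the paper projects onto $\A^m$ instead, and this choice makes the argument dramatically simpler. For each $a\neq 0$ the fiber $X(a)\subset V^m$ is cut out by $T_m(\cF_{<h})$ together with the $s_h$ forms $f_{h,j}(a\cdot\ul v)=\sum_e a^e f_{h,j}^e(\ul v)$, which are $s_h$ linearly independent members of $\sp_K(T_m(\cF_h))$. Thus the defining equations of every nonzero fiber form an absolutely $(C,D,1)$-strong tower (a sub-tower of $T_m(\cF)$), and Lemma~\ref{lem:str-prime} gives irreducibility and complete intersection immediately. The only exceptional fiber is over $a=0$, where it is simply $Z_m(\cF_{<h})$; the hypothesis $m\ge s_h+2$ is used precisely (and only) to handle this one fiber, both for the dimension count and for the Jacobian criterion of Corollary~\ref{cor:reg-prime}.

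Your route has a real gap at point~(ii). The uniform bound $\dim\pi^{-1}(\ul v)\le m-s_h$ is simply false: over $\ul v=0$ the fiber is all of $\A^m$, and more generally the fiber jumps whenever $\mathrm{span}(v_1,\dots,v_m)$ has small dimension or meets $Z(\cF_h)$ in excess dimension. You try to control the bad locus via $S(\cF)$ and Lemma~\ref{lem:str-reg}, but that is the wrong mechanism: an oversized fiber means $\cF_h$ restricted to the subspace $W=\mathrm{span}(v_i)$ fails to be a regular sequence on $W$, which is a statement about the restriction, not about the Jacobian locus $S(\cF)$ in $V$. Nothing in Lemma~\ref{lem:str-reg} bounds the locus of $\ul v$ with degenerate restriction, and your claimed ``codimension $>s_h$'' is not established. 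Point~(i) is also not supported by the lemma you cite: Lemma~\ref{lem:Taylor-strong} concerns the strength of $T_m(\cF)$ as a tower in $\ul v$, not the strength of $\cF_h|_W$ as forms in $a$; for the latter one needs something like Lemma~\ref{reg-to-surj} combined with Lemma~\ref{lem:gen-str}, which the paper invokes only \emph{after} the present lemma, in the next proposition.
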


\begin{proof}
    By the previous lemma, $T_m(\cF)$ is absolutely $(C,D,1)$-strong. For fixed $0 \neq a\in \A^m$, note that the $s_h$ forms $\cF_h (a\cdot \ul{v})$ are a linearly independent subset of $T_m(\cF_h)$. Thus, the equations for the fiber $X(a)$ are absolutely $(C,D,1)$-strong and hence by lemma \ref{lem:str-prime} this fiber is an irreducible complete intersection. The remaining fiber over zero has codimension $|T_m(\cF_{<h})| + m \ge |T_m(\cF_{<h})|+s_h$, so that $X$ is indeed a complete intersection of the claimed codimension.

    Now we prove that $X$ is irreducible. Let  By corollary \ref{cor:reg-prime}, this will follow once we show that $\codim X\cap S > |T_m(\cF_{<h})|+s_h+1$, where $S$ is the set where the differentials of the defining equations for $X$ are linearly depdendent.  The fiber over zero is $Z_m(\cF_{<h}))$ and has codimension $ m+|T_m(\cF_{<h})| > |T_m(\cF_{<h})|+s_h+1$, since $Z_m(\cF_{<h})$ is a complete intersection by lemma \ref{lem:str-prime}. For any $a\neq 0$, the fiber $X(a)\cap S(a)$ is contained in $Z_m(\cF_{<h})\cap S(T_m(\cF))$. By claim \ref{str-shuffle}, for $C(\ul{d}),D(\ul{d})$ sufficiently large we have that $T_m(\cF)$ is absolutely $(A,B,m+s_h+1)$-strong and hence $\codim Z_m(\cF_{<h})\cap S(T_m(\cF)) > |T_m(\cF_{<h})| +m+s_h+1$. Taking the union over the fibers $X(a)\cap S(a)$, we get $\codim X\cap S > |T_m(\cF_{<h})|+s_h+1$,  completing the proof.
\end{proof}

We also need the fact that a generic tuple of forms has large absolute strength.
\begin{lemma}\label{lem:gen-str}
     For an open dense set of tuples of forms $f_1,\ldots,f_s$ of degree $d$ in $m \ge d$ variables we have  $\overline{\str}(f_1,\ldots,f_s) > \frac{m-s}{d2^d}.$ 
\end{lemma}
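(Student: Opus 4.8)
The plan is a dimension count in the space of coefficient tuples: the degree‑$d$ forms of small strength fill out a low‑dimensional locus, and even after allowing arbitrary linear combinations this locus cannot fill the whole space of $s$‑tuples. Work over $\ol K$ (strength is absolute); write $V$ for the space of degree‑$d$ forms in $m$ variables, $N=\dim V=\binom{m+d-1}{d}$, and $V_e$ for the analogous space in degree $e$. Two cases are immediate and may be set aside: if $d=1$ every nonzero linear form has infinite strength; and if $m<s$ then $\tfrac{m-s}{d2^d}<0\le\ol{\str}(\ul f)$ for every tuple $\ul f$. So assume $d\ge 2$ and $m\ge s$, and let $r$ be the largest non‑negative integer with $r\le\tfrac{m-s}{d2^d}$, so that $r+1>\tfrac{m-s}{d2^d}$. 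I will exhibit a Zariski‑dense open set of tuples with $\ol{\str}(\ul f)>r$; since $\ol{\str}$ is integer‑valued, such $\ul f$ satisfy $\ol{\str}(\ul f)\ge r+1>\tfrac{m-s}{d2^d}$.

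Let $W_r=\{f\in V:\ol{\str}(f)\le r\}$. This is a cone, equal to the union of the images of the finitely many polynomial maps $\prod_{i=1}^{r}(V_{e_i}\times V_{d-e_i})\to V$, $((g_i,h_i)_i)\mapsto\sum_i g_ih_i$, indexed by $\ul e\in\{1,\ldots,d-1\}^r$ (a decomposition using fewer terms being padded by zero products); hence $W_r$ is constructible and, as $\dim V_e$ is non‑decreasing in $e$,
\[
\dim W_r\le 2r\max_{1\le e\le d-1}\dim V_e=2r\binom{m+d-2}{d-1}.
\]
Using $N=\tfrac{m+d-1}{d}\binom{m+d-2}{d-1}$ and $2r\le\tfrac{m-s}{d2^{d-1}}\le\tfrac{m-s}{2d}$ (valid since $d\ge 2$, $m\ge s$), one gets $\codim_V W_r=N-\dim W_r\ge\tfrac{m+s}{2d}\binom{m+d-2}{d-1}$, and a short computation using $m\ge\max(d,s)$ then gives $\codim_V W_r>s-1$. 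This is the one place where the precise constant $d2^d$ is consumed.

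Finally I pass to the tuple space $V^s$. By the definition of collective strength the bad set is
\[
B=\{\ul f\in V^s:\ol{\str}(\ul f)\le r\}=\{\ul f:\textstyle\sum_i a_if_i\in W_r\text{ for some }0\ne a\in\ol K^s\},
\]
which, as $W_r$ is a cone, is the image under the second projection of the constructible set $\widetilde B=\{([a],\ul f)\in\P^{s-1}\times V^s:\sum_i a_if_i\in W_r\}$. Over a point $[a]$, after normalizing a coordinate $a_j=1$, the fibre of $\widetilde B$ is $\{\ul f:f_j\in W_r-\sum_{i\ne j}a_if_i\}$: as $(f_i)_{i\ne j}$ ranges over $V^{s-1}$, the coordinate $f_j$ ranges over a translate of $W_r$. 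Hence $\dim\widetilde B\le(s-1)+\dim W_r+(s-1)N$, so
\[
\dim B\le(s-1)+\dim W_r+(s-1)N=sN-\big(\codim_V W_r-(s-1)\big)<sN=\dim V^s.
\]
Therefore $V^s\setminus\ol B$ is Zariski‑open and dense, and every tuple in it has $\ol{\str}(\ul f)>r$, completing the proof. The main (indeed only) point that needs checking is the codimension estimate $\codim_V W_r>s-1$; the rest is formal.
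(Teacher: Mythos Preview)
Your proof is correct and follows essentially the same approach as the paper: bound the dimension of the locus $W_r\subset V$ of degree-$d$ forms of strength $\le r$, then take a union over $[a]\in\P^{s-1}$ to bound the bad locus in $V^s$. The only cosmetic differences are in the numerical estimates---the paper uses $\dim V_a+\dim V_{d-a}\le\binom{m+d-2}{d-1}+m$ and then the cruder bounds $\binom{m+d-1}{d}\ge m^d/d!$ and $(m+d-2)^{d-1}\le(2m)^{d-1}$, whereas you use the looser $\dim V_a+\dim V_{d-a}\le 2\binom{m+d-2}{d-1}$ together with the exact identity $N=\tfrac{m+d-1}{d}\binom{m+d-2}{d-1}$---and you are slightly more explicit than the paper about constructibility and about the trivial cases $d=1$ and $m<s$.
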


\begin{proof}
    This is an effective version of \cite[Proposition 2.7]{LS-Birch}. Let $r = \frac{m-s}{d2^d}$. First, note that the set of $f$ with absolute strength $\le r$ is parametrized by $r$ pairs of forms $g_i,h_i$ of degrees $a_i,b_i < d$ such that  $a_i + b_i =d$.  For a fixed sequence $a_1,\ldots,a_r < d$, the dimension of this parameter space is 
    \[
    \sum_{i=1}^r \left[\binom{m+a_i-1}{a_i} + \binom{m+b_i-1}{b_i}\right]  \le r\left[\binom{m+d-2}{d-1} + m \right].
    \]
    For fixed $a_1,\ldots,a_s\in P^{s-1}$, the codimension of $f_1,\ldots,f_s$ such that $a_1f_1+\ldots+a_sf_s $ has absolute strength $\le r$ is thus $\ge \binom{m+d-1}{d} - r(\binom{m+d-2}{d-1} + m). $ By a union bound, the codimension of $f_1,\ldots,f_s$ with $\overline{\str}(f_1,\ldots,f_s) \le r$ is at least  
    \begin{align*}
        \binom{m+d-1}{d} - r\left(\binom{m+d-2}{d-1} + m\right) - (s-1) &\ge \frac{m^d}{d!} -2 r \frac{(m+d-2)^{d-1}}{(d-1)!} - (s-1) \\
        &\ge \frac{m^d}{d!} - r\frac{2^d m^{d-1}}{(d-1)!} - (s-1) \\
        &= \frac{sm^{d-1}}{d!} - (s-1) \ge 1,
    \end{align*}
    as desired. 
\end{proof}

\section{Proof of theorem \ref{thm:dense}}

The proof of theorem \ref{thm:dense} is by induction on $h,$ the number of layers in $\cF.$

\subsection{Base case $h=1$}
For Brauer fields, we rely on the following result from \cite{Lam-density}.

\begin{proposition}
    There exist constants $A(d),B(d)$ such that if $K$ is a Brauer field, $f_1,\ldots,f_s$ all have degree $d$ and  $\brk(f_1,\ldots,f_s) > A(s+\phi_2+\ldots+\phi_d)^B $ 
    then the $K$-points of $Z(\ul{f})$ are Zariski dense.
\end{proposition}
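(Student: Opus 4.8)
The approach is a density-enhanced version of Birch's restriction method, combined with the structural results on strong systems recalled above. First, reductions: by Lemma~\ref{lem:str-brk}, $\brk(\ul f)>A(s+\phi_2+\dots+\phi_d)^B$ forces $\ol\str(\ul f)$ to be comparably large, so for $A,B$ large enough Lemma~\ref{lem:str-prime} applies and $Z:=Z(\ul f)$ is a geometrically irreducible complete intersection with singular locus of large codimension. Since $Z$ is irreducible, $\ol{Z(K)}$ is either all of $Z$ or lies inside $Z\cap\{g=0\}$ for some form $g$ not vanishing identically on $Z$; hence it suffices, given such a $g$, to produce a point of $Z(K)$ at which $g$ does not vanish.

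The core is a quantitative restriction lemma: if $\brk(\ul f)$ exceeds a suitable polynomial in $m,s,d$, there is a $K$-linear subspace $W=\langle v_1,\dots,v_m\rangle$ of dimension $m$ such that every $f_i|_W$ is \emph{diagonal} in the basis $v_1,\dots,v_m$, i.e.\ all mixed Taylor coefficients $f_i^{\alpha}(v_1,\dots,v_m)$ vanish (Lemma~\ref{lem:Taylor-identities}), and such that $g$ does not vanish identically on $Z\cap W$. One builds the frame $v_1,\dots,v_m$ one vector at a time: the constraint on each new vector is a system of forms (the mixed Taylor terms involving it and the earlier $v_j$), and since restriction to a generic hyperplane drops Birch rank by at most one, these auxiliary systems keep enough Birch rank that a common zero exists. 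The side condition on $g$ is imposed by noting that the variety of valid frames is, again thanks to the Birch rank hypothesis and Lemmas~\ref{lem:str-reg}--\ref{lem:str-prime}, irreducible of large dimension, so it cannot lie in the proper closed locus of frames for which $g|_{Z\cap W}\equiv 0$. (If the diagonal coefficients $c_{i,j}:=f_i(v_j)$ all vanish then $W\subseteq Z$ and we are done at once; otherwise proceed.)

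Finally, $Z\cap W$ is cut out by the $s$ diagonal forms $\sum_{j=1}^m c_{i,j}t_j^d=0$. Choosing $m$ to exceed a polynomial in $s$ and $\phi_d$, the Brauer property applied to this diagonal system (iterating Definition~\ref{def:Brauer}, or via the Leep--Schmidt/Wooley bound) yields a nontrivial $K$-point; more precisely, a direct argument for diagonal systems over Brauer fields -- grouping the many variables into blocks, solving block by block, and exploiting that Definition~\ref{def:Brauer} holds for \emph{all} coefficients, in particular those obtained by specializing further variables -- shows that the $K$-points of $Z\cap W$ are Zariski dense in it. Since $g$ is not identically zero on $Z\cap W$, some such $K$-point $\sum_j t_j v_j$ has $g\ne 0$, and it lies in $Z(K)$, as required.

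The main obstacle is the restriction lemma with bounds \emph{polynomial} in $m,s,d$: Birch's and Brauer's original inductions are hopelessly lossy here, and one must instead run the vector-by-vector construction while simultaneously controlling the singular locus -- equivalently the strength -- of the auxiliary systems and of the incidence variety of diagonalizing frames, in order to reconcile the closed condition ``$f_i|_W$ diagonal'' with the open condition ``$g\not\equiv 0$ on $Z\cap W$''. The density statement for diagonal systems is comparatively soft, but still has to be made quantitative enough to feed into the same bookkeeping.
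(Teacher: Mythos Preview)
The paper does not prove this proposition; it is quoted from \cite{Lam-density} and used as a black box for the base case $h=1$ of Theorem~\ref{thm:dense}. There is therefore no in-paper argument to compare your sketch against.

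On its own merits, your sketch follows the classical Birch--Brauer diagonalization strategy, and the overall architecture is reasonable. But the load-bearing step is not justified: you claim the auxiliary systems constraining each new $v_j$ ``keep enough Birch rank that a common zero exists.'' These auxiliary forms are the mixed Taylor terms $f_i^{\alpha}(v_1,\ldots,v_{j-1},\cdot)$, which are forms of degrees $1,\ldots,d-1$ in the new variable---they are \emph{not} restrictions of the $f_i$ to hyperplanes---so the one-drop-per-hyperplane heuristic you invoke is not what is in play. Passing from high Birch rank of $\ul f$ to high Birch rank (collectively) of these derivative systems is a genuine lemma that needs a precise statement and proof. Moreover, ``a common zero exists over $K$'' for a system of degrees $<d$ is exactly the proposition in lower degree, so you are implicitly inducting on $d$; that is legitimate, but it must be made explicit and the resulting recursion in $A(d),B(d)$ tracked to confirm the bounds stay polynomial in $s+\phi_2+\cdots+\phi_d$. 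The irreducibility of the frame variety (needed to reconcile the closed diagonalizing condition with the open condition $g\not\equiv 0$ on $Z\cap W$) and the Zariski density of $K$-points on the final diagonal system are further nontrivial lemmas that your sketch only gestures at. As written this is a plausible outline of the classical route, but well short of a proof.
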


\begin{corollary}\label{Brauer-base}
    Theorem \ref{thm:dense} holds over Brauer fields for towers of height one.
\end{corollary}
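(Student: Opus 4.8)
The plan is to deduce the corollary directly from the displayed Proposition of \cite{Lam-density} together with the Kazhdan--Lampert--Polishchuk bound of Lemma~\ref{lem:str-brk}. A height-one tower $\cF$ is simply a tuple of forms $f_1,\ldots,f_{s_1}$ all of one degree $d_1 \le d$, and unwinding the definitions, the hypothesis that $\cF$ is absolutely $(C,D,\phi_2+\ldots+\phi_d)$-strong becomes the single inequality $\ol{\str}(f_1,\ldots,f_{s_1}) > C(s_1+\phi_2+\ldots+\phi_d)^D$.

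First I would dispose of the degenerate case $d_1 = 1$: positivity of the strength forces the $f_i$ to be linearly independent (any nontrivial linear relation would make the collective strength zero), so $Z(\cF)$ is a linear subspace of $\A^n$, whose $K$-points are Zariski dense since $K$ has characteristic zero. So assume $d_1 \ge 2$. The right-hand inequality of Lemma~\ref{lem:str-brk} gives $\ol{\str}(f_1,\ldots,f_{s_1}) \le (d_1-1)\big(\brk(f_1,\ldots,f_{s_1}) + s_1 - 1\big)$, hence
\[
\brk(f_1,\ldots,f_{s_1}) \;\ge\; \frac{\ol{\str}(f_1,\ldots,f_{s_1})}{d_1-1} - s_1 \;>\; \frac{C}{d-1}\,(s_1+\phi_2+\ldots+\phi_d)^D - s_1 .
\]

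Write $A_0 = \max_{2 \le d' \le d} A(d')$ and $B_0 = \max_{2 \le d' \le d} B(d')$ for the constants supplied by the Proposition, and set $D = B_0 + 1$ and $C = (d-1)(A_0+1)$. Since $X := s_1+\phi_2+\ldots+\phi_d \ge 1$, we have $X \le X^{B_0+1}$ and $X^{B_0} \ge (s_1+\phi_2+\ldots+\phi_{d_1})^{B(d_1)}$, so the right-hand side above is at least
\[
(A_0+1)X^{B_0+1} - X \;\ge\; A_0 X^{B_0+1} \;\ge\; A_0 X^{B_0} \;\ge\; A(d_1)\big(s_1+\phi_2+\ldots+\phi_{d_1}\big)^{B(d_1)} .
\]
Thus $\brk(f_1,\ldots,f_{s_1}) > A(d_1)(s_1+\phi_2+\ldots+\phi_{d_1})^{B(d_1)}$, and the Proposition applies to conclude that the $K$-points of $Z(\cF)$ are Zariski dense. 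This proves the corollary with $C_{\ref{thm:dense}}(d) = C$ and $D_{\ref{thm:dense}}(d) = D$ for the height-one case.

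I do not expect a serious obstacle here: the argument is bookkeeping once the cited Proposition and Lemma~\ref{lem:str-brk} are available. The only points requiring attention are the edge cases (degree one, or a linearly dependent collection, where the hypothesis is either vacuous or the conclusion trivial) and the uniform choice of the constants $C, D$ over the finitely many admissible degrees $d_1 \le d$.
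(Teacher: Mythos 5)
Your argument is correct and is essentially the paper's own proof: the paper likewise deduces the corollary immediately from the cited Proposition of \cite{Lam-density} together with Lemma~\ref{lem:str-brk}, and your contribution is only to make the constant bookkeeping and the degenerate degree-one case explicit.
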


\begin{proof}[Proof of corollary \ref{Brauer-base}]
    This is an immediate consequence of the above proposition and lemma \ref{lem:str-brk}. 
\end{proof}

For number fields, we begin with the following well-known fact.
\begin{lemma}\label{lem:real-pts}
    If $f_1,\ldots,f_s \in \R[x_1,\ldots,x_n]$ are forms of odd degrees and $Z = Z(f_1,\ldots,f_s)$ is an irreducible complete intersection, then the real points are Zariski dense in $Z$. 
\end{lemma}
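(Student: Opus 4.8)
The plan is to exploit the elementary fact that a nonconstant odd-degree polynomial in one real variable has a real root, combined with the irreducibility of $Z$ to upgrade a single real point to a Zariski-dense set. First I would observe that it suffices to produce, for every nonempty open subset $U$ of $Z$ (in the Zariski topology), a real point in $U$; since $Z$ is irreducible, showing $Z(\R)$ is not contained in any proper subvariety is the same as showing it is Zariski dense.

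The heart of the argument is a dimension-induction that peels off one variable at a time. Concretely, I would argue as follows. Since $Z$ is an irreducible complete intersection of codimension $s$, it has dimension $n-s$, and after a generic linear change of coordinates we may assume the projection $\pi\colon Z \to \A^{n-s}$ onto the first $n-s$ coordinates is dominant and generically finite. Pick a general point $b = (b_1,\ldots,b_{n-s}) \in \R^{n-s}$; the fiber $\pi^{-1}(b)$ is a finite nonempty set of points in $\A^s$. To see this fiber contains a real point, set $x_1 = b_1, \ldots, x_{n-s} = b_{n-s}$ in the $f_i$ and solve the resulting zero-dimensional system in the remaining $s$ variables. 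Here I would use the odd-degree hypothesis together with a further generic coordinate choice so that, after eliminating variables one at a time (e.g.\ via resultants, or by choosing a coordinate in which $Z$ projects finitely onto each intermediate $\A^{k}$), one is reduced to finding a real root of a single one-variable polynomial whose degree is odd—forced by the odd degrees of the $f_i$ and the genericity of the coordinates—which exists by the intermediate value theorem. Lifting back up the tower of eliminations, each subsequent coordinate is then determined by a polynomial equation over $\R$ that again, by genericity, has odd degree in the relevant variable, hence a real solution.

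A cleaner way to package the elimination step, which I would prefer to present, is this: by repeatedly intersecting $Z$ with generic real hyperplanes $x_i = b_i$ for $i \le n-s$, genericity of the $b_i$ keeps each successive intersection an irreducible complete intersection (Bertini, over $\R$ or after base change, noting that irreducibility is preserved for general members since $Z$ is geometrically irreducible by hypothesis) of one lower dimension, until we reach a curve $C \subset \A^{s+1}$ that is an irreducible complete intersection cut out by $s$ forms of odd degree. On this curve, the remaining coordinate functions, restricted to $C$, give maps $C \to \A^1$; a general such map has odd degree because the degrees of the defining forms are odd and parity of the mapping degree is computed from these via Bézout-type count, and an odd-degree finite map from a real curve to $\A^1$ has a real point in the fiber over a general real value. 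Intersecting once more reduces to a finite set of points, one of which must be real by this parity argument.

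The main obstacle I anticipate is making the "odd degree forces odd mapping degree, hence a real point" step fully rigorous and correctly handling genericity: one must ensure that the generic linear change of coordinates is defined over $\R$ (fine, since $\R$ is infinite), that geometric irreducibility is preserved under the successive generic hyperplane sections (this is where one needs $Z$ irreducible as stated, and a real Bertini argument), and—most delicately—that the relevant one-variable polynomial genuinely has odd degree rather than merely odd "expected" degree, which could fail if leading terms cancel. Controlling the leading behavior is exactly what the genericity of the coordinate choice buys us, since the top-degree part of each $f_i$ is a nonzero form of odd degree, and a generic line meets the projectivized vanishing locus of that top-degree part in a number of points of the correct parity. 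I would also need the classical input that an odd-degree real polynomial in one variable has a real root, and the fact that a dominant generically finite map of real varieties has real points in general fibers once one fiber is known to meet $\R$—both standard, but worth citing carefully.
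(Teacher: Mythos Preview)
Your approach is correct in outline and does prove the lemma, but it is considerably more elaborate than necessary, and the paper's own proof is a single sentence: ``The Zariski closure $Y = \overline{Z(\R)}$ has dimension at least $n-s = \dim Z$.'' Since $Z$ is irreducible of dimension $n-s$, this forces $Y = Z$.

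The clean way to justify that one sentence---and the kernel of what you are doing---is this. Because $Z$ is a complete intersection, B\'ezout gives $\deg Z = \prod_{i=1}^s d_i$, which is \emph{odd}. Choose a generic linear projection $\pi \colon Z \to \A^{n-s}$ defined over $\R$; it is finite of degree $\prod d_i$. For $b \in \R^{n-s}$ outside a proper Zariski-closed set, the fiber $\pi^{-1}(b)$ consists of exactly $\prod d_i$ complex points, and complex conjugation acts on this set. Non-real points pair off, so an odd-cardinality fiber must contain a real point. Hence $Z(\R)$ dominates $\A^{n-s}$ under $\pi$, and $\dim \overline{Z(\R)} \ge n-s$.

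You do reach this idea (``parity of the mapping degree is computed from these via B\'ezout-type count''), but it is buried under machinery you do not need. In particular: there is no need for resultants or one-variable elimination, so the worry about leading-term cancellation never arises; there is no need for Bertini over $\R$ or for slicing down to a curve; and you do not need geometric irreducibility---the fiber-parity argument only uses that $Z$ is a complete intersection, and irreducibility is used only at the very end to conclude $Y = Z$. Your proposal is not wrong, but it obscures a two-line argument inside several paragraphs of scaffolding.
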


\begin{proof}
    The Zariski closure $Y = \ol{Z(\R)}$ has dimension at least $n-s = \dim Z$.
\end{proof}

Our main tool is a result of Skinner on weak approximation. First we recall the definition.

\begin{definition}
    Let $K$ be a number field and $f_1,\ldots,f_s\in K[x_1,\ldots,x_n]$ forms. We say that \emph{weak approximation} holds for $Z = Z(f_1,\ldots,f_s)$ if $Z(K) \neq \{0\}$ and the diagonal embedding $Z(K)\to \prod_v Z(K_v)$ has dense image, where the product is over the completion of $K$ at all places $v$.
\end{definition}

Now we can state Skinner's result \cite[Corollary 1]{Skinner-HLS}.

\begin{proposition}[Skinner]\label{prop:skinner}
    Suppose that $f_1,\ldots,f_s$ all have degree $d$ and that 
    \[
    \brk(f_1,\ldots,f_s) > s(s+1)(d-1)2^{d-1}.
    \]
    Suppose furthermore that  $Z = Z(f_1,\ldots,f_s) $  is a complete intersection and that it has a non-singular $K_v$-point for every place $v$. Then weak approximation holds for $Z$. 
\end{proposition}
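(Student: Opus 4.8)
The plan is to prove this the way Skinner does, via the Hardy--Littlewood circle method over number fields (itself a refinement of Birch's 1962 treatment of a single box). After clearing denominators we may assume $f_1,\dots,f_s\in\mathcal O_K[x_1,\dots,x_n]$, and since the $f_i$ are homogeneous it suffices, given a finite set $T$ of places and a small open neighbourhood $U_v\subseteq Z(K_v)$ of a chosen nonsingular point for each $v\in T$, to produce a point of $Z(K)$ lying in every $U_v$. Shrinking the archimedean $U_v$'s to balls and the non-archimedean ones to cosets modulo a suitable ideal $\mathfrak q$ (chosen \emph{before} any limit is taken, in terms of the fixed nonsingular local points), this reduces to showing that the number $N(P)$ of $x\in\mathcal O_K^n$ in a growing box of size $P$, lying in the prescribed residue classes mod $\mathfrak q$ and in the prescribed archimedean balls, with $f_i(x)=0$ for all $i$, tends to infinity. (The complete-intersection hypothesis is used here to guarantee that the smooth locus of $Z$ has the expected dimension $n-s$, which matches the exponent in the main term below.)

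Next I would write $N(P)$ as an integral of $\prod_{i} $ of the exponential sums $\sum_{x} e\bigl(\operatorname{Tr}_{K/\Q}(\alpha_1 f_1(x)+\dots+\alpha_s f_s(x))\bigr)$ over a fundamental domain for $(K\otimes_\Q\R)^s$ modulo the dual lattice $(\mathfrak d^{-1})^s$, with the $x$-sum restricted to the box and the congruence classes. The circle is dissected into major arcs around rationals of small denominator and minor arcs. On the major arcs the main term is $\mathfrak S\,\mathfrak I\,P^{\,n-ds}$, where $\mathfrak I$ is a singular integral (a product over archimedean places) and $\mathfrak S$ a singular series (an Euler product of local $\mathfrak p$-adic densities, twisted at primes dividing $\mathfrak q$). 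Positivity of $\mathfrak I$ follows from the existence of the chosen nonsingular archimedean points in the $U_v$, positivity of each $\mathfrak p$-adic factor from a nonsingular $\mathfrak p$-adic point together with Hensel's lemma, and convergence of the Euler product (together with the error in approximating the local densities) is controlled by Birch's bound for complete exponential sums modulo prime powers --- this is one place where the hypothesis $\brk(f_1,\dots,f_s)>s(s+1)(d-1)2^{d-1}$ is invoked.

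The heart of the argument is the minor-arc estimate. Here one Weyl-differences each $f_i$ a total of $d-1$ times, reducing $|\sum_x e(\cdots)|$ to a count of lattice points on which certain multilinear forms built from the top-degree data of the $f_i$ are simultaneously small; Birch's geometric lemma bounds this count in terms of $\dim V^*$, where $V^*=(x:\nabla f_1(x),\dots,\nabla f_s(x)\text{ linearly dependent})$, so that $\codim V^*=\brk(f_1,\dots,f_s)$. The inequality $\brk>s(s+1)(d-1)2^{d-1}$ is precisely what makes this bound beat $P^{\,n-ds}$ once summed over all minor arcs. Combining the two contributions gives $N(P)\sim \mathfrak S\,\mathfrak I\,P^{\,n-ds}$ with positive leading constant, hence $N(P)\to\infty$, so $Z(K)$ meets every $U_v$ and weak approximation holds. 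I expect the main obstacle to be twofold: running the Weyl-differencing and geometry-of-numbers steps uniformly over a general number field --- where complex places contribute doubly to dimensions and $\mathcal O_K$ must be treated as a lattice in $K\otimes_\Q\R$ --- and verifying that imposing the congruence and archimedean conditions neither kills the positivity of $\mathfrak S$ and $\mathfrak I$ nor enlarges the minor arcs; the latter is exactly why $\mathfrak q$ and the balls are fixed in terms of the local nonsingular points at the outset.
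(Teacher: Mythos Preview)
The paper does not prove this proposition at all: it is quoted verbatim as \cite[Corollary~1]{Skinner-HLS} and used as a black box. So there is nothing to compare your argument against in the paper itself.

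That said, your sketch is a faithful outline of Skinner's actual proof, and the overall architecture --- reduce weak approximation to an asymptotic count with congruence and archimedean constraints, run the circle method with Weyl differencing $d-1$ times, invoke Birch's geometry-of-numbers lemma to relate the minor-arc bound to $\codim V^\ast=\brk(f_1,\dots,f_s)$, and check positivity of $\mathfrak S$ and $\mathfrak I$ from the nonsingular local points --- is correct. The threshold $\brk>s(s+1)(d-1)2^{d-1}$ is indeed exactly the point where the minor-arc saving beats $P^{n-ds}$ in Birch's setup, and your remarks about fixing $\mathfrak q$ and the archimedean balls in advance (so they do not interfere with the asymptotic) are on target. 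If you were to write this out in full you would essentially be reproducing Skinner's paper; for the purposes of the present work a citation suffices.
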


\begin{corollary}\label{base-num}
    Theorem \ref{thm:dense} holds over number fields for towers of height one.
\end{corollary}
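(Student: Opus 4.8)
The plan is to reduce Theorem \ref{thm:dense} for a single layer over a number field $K$ to Skinner's weak approximation result, Proposition \ref{prop:skinner}. Suppose $\cF = \cF_1 = (f_1,\ldots,f_s)$ consists of forms of odd degree $d$ in $n$ variables and is absolutely $(C,D,1)$-strong for the constants we are allowed to choose. By Lemma \ref{lem:str-brk}, absolute strength at least $As^B$ (for suitable $A,B$) forces $\brk(f_1,\ldots,f_s)$ to be at least $s(s+1)(d-1)2^{d-1}$, so the Birch-rank hypothesis of Proposition \ref{prop:skinner} is met once $C,D$ are large enough; and by Lemma \ref{lem:str-prime}, after possibly enlarging $C,D$ again, $Z = Z(\cF)$ is an irreducible complete intersection (in particular $(\cF)$ is prime). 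So the only remaining input to Skinner's proposition is the existence, at every place $v$ of $K$, of a non-singular $K_v$-point of $Z$.

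For the finite places $v$, the completion $K_v$ is a $p$-adic field, hence Brauer by Birch's theorem \cite{Birch-p-adic}; moreover absolute strength is unchanged by the field extension $K \subseteq K_v$, so $\cF$ is still absolutely $(C,D,1)$-strong over $K_v$, and Corollary \ref{Brauer-base} (the Brauer case already handled) gives that $Z(K_v)$ is Zariski dense in $Z$. Since the singular locus $Z \cap S(\cF)$ has strictly smaller dimension than $Z$ by Lemma \ref{lem:str-reg}, Zariski density produces a $K_v$-point off the singular locus, i.e. a non-singular one. For the infinite places: a complex place gives $K_v = \C$, and an algebraically closed field certainly has a non-singular point of the irreducible variety $Z$; a real place gives $K_v = \R$, and here we invoke Lemma \ref{lem:real-pts} — since $Z$ is an irreducible complete intersection cut out by forms of odd degree, the real points are Zariski dense in $Z$, and again density plus the codimension-$\ge 2$ bound on the singular locus yields a non-singular real point. (Note this real-place step is exactly where the oddness of the degrees is used, matching the hypothesis of Theorem \ref{thm:dense} in the number-field case.)

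With non-singular local points in hand at every place, Proposition \ref{prop:skinner} gives weak approximation for $Z$; in particular $Z(K) \neq \{0\}$ and $Z(K)$ is dense in $\prod_v Z(K_v)$, which maps onto each factor, so $Z(K)$ is non-empty and, being dense in (say) $Z(\C)$ for a complex place or dense in the full-dimensional $Z(\R)$ for a real place, is Zariski dense in $Z$. (If $K$ has no archimedean place of the convenient type one can instead note directly that a $v$-adically dense subset of the non-singular $K_v$-points is already Zariski dense in the irreducible variety $Z$, since a proper Zariski-closed subset is nowhere dense $v$-adically.) This establishes Theorem \ref{thm:dense} for height-one towers over number fields.

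The main obstacle I anticipate is not any single hard estimate but bookkeeping: one must arrange a single pair of constants $(C,D)$ that simultaneously satisfies the Birch-rank threshold coming from Lemma \ref{lem:str-brk} and Skinner, the completeness/primeness threshold from Lemma \ref{lem:str-prime}, and the codimension-$\ge 2$ singular-locus bound from Lemma \ref{lem:str-reg}. Each of these is polynomial in $s$ with $d$-dependent coefficients, so taking the maximum is routine, but it needs to be stated carefully. A secondary subtlety is the passage from Zariski density of local points to a genuine non-singular local point, which uses that the singular locus is a proper closed subset — valid precisely because absolute strength controls $S(\cF)$ via Lemma \ref{lem:str-reg}.
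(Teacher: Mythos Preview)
Your proof is correct and follows essentially the same route as the paper's: Birch-rank via Lemma~\ref{lem:str-brk}, irreducible complete intersection via Lemma~\ref{lem:str-prime}, local Zariski density at infinite places via Lemma~\ref{lem:real-pts} and at finite places via Corollary~\ref{Brauer-base}, then Skinner's Proposition~\ref{prop:skinner}. The one detail you should make explicit is that invoking Corollary~\ref{Brauer-base} over $K_v$ requires absolute $(C,D,\phi_2+\cdots+\phi_d)$-strength with the Brauer constants of $K_v$, so you need the uniform bound $\phi_d \le \Gamma(d)$ for all $p$-adic fields (from \cite{Birch-p-adic}) in order to absorb $\phi_2+\cdots+\phi_d$ into a single choice of $C,D$ that works for every finite place simultaneously---the paper states this explicitly, and without it the constants would a priori depend on $v$.
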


\begin{proof}[Proof of corollary \ref{base-num}]

    Choosing $A(d),B(d)$ sufficiently large in the hypothesis of theorem \ref{thm:dense}, lemma \ref{lem:str-brk}  guarantees that $\brk(f_1,\ldots,f_s) > s(s+1)(d-1)2^{d-1}$ and also that  $Z$ is an irreducible complete intersection by lemma \ref{lem:str-prime}.  Now, by lemma \ref{lem:real-pts},  it is enough to prove that weak approximation holds for $Z$. By proposition \ref{prop:skinner}, this will follow once we show that the $K_v$-points of $Z$ are Zariski dense for every place $v$. For infinite places, this is immediate from lemma \ref{lem:real-pts}.  
    
    For finite places, Zariski-density of the $K_v$-points of $Z$ follows from corollary \ref{Brauer-base} for Brauer fields, together with the fact that $\phi_d \le \Gamma (d)$ for any finite extension of $\Q_p$ (see \cite{Birch-p-adic}).
    
\end{proof}

\subsection{The inductive step}

Now that we have established theorem \ref{thm:dense} for towers of height one, we move on to the inductive step. We will use the following standard fact.

\begin{lemma}\label{reg-to-surj}
    If $f_1,\ldots,f_s\in K[x_1,\ldots,x_n]$ are forms which comprise a regular sequence then the map $\ul{f}:\overline{K}^n\to \overline{K}^s$ is surjective.
\end{lemma}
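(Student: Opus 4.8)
The plan is to prove the statement by a dimension count, which is really just a restatement of the complete intersection condition. Recall that $f_1,\ldots,f_s$ forming a regular sequence in $\overline{K}[x_1,\ldots,x_n]$ means precisely that the closed subscheme $Z = Z(f_1,\ldots,f_s)$ they cut out has codimension exactly $s$ in $\A^n$, i.e.\ every irreducible component of $Z$ has dimension $n-s$. The map in question is the morphism $\ul{f}\colon \A^n \to \A^s$ given by $x \mapsto (f_1(x),\ldots,f_s(x))$, and we want to show it is surjective on $\overline{K}$-points.

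First I would pass to the question of whether every fiber is nonempty. Fix a point $c = (c_1,\ldots,c_s) \in \overline{K}^s$; the fiber $\ul{f}^{-1}(c)$ is the vanishing locus of $f_1 - c_1,\ldots,f_s - c_s$. The key observation is that $f_1 - c_1,\ldots,f_s - c_s$ is again a regular sequence: this follows because the associated graded (with respect to the standard filtration by degree, or simply because the leading forms are the $f_i$, which form a regular sequence) argument shows that an $s$-tuple of polynomials whose top-degree parts form a regular sequence is itself a regular sequence. Alternatively, and perhaps more cleanly, one can invoke the fact that a sequence is regular in a polynomial ring over a field iff the codimension of its zero locus equals the number of elements, and then use upper-semicontinuity of fiber dimension for the morphism $\ul{f}$: since the fiber over $0$ has codimension $s$ (its components all have dimension $n-s$), the generic fiber has dimension $n-s$, and every fiber has dimension $\ge n-s$ by the principal ideal theorem; we need to promote this to nonemptiness of every fiber, which is where the homogeneity is used.

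Here is where I would use that the $f_i$ are forms. The image $\ul{f}(\A^n)$ is a constructible subset of $\A^s$ (Chevalley), and it is stable under the scaling action $t \cdot (c_1,\ldots,c_s) = (t^{d_1}c_1,\ldots,t^{d_s}c_s)$ where $d_i = \deg f_i$, because $f_i(tx) = t^{d_i} f_i(x)$. Its closure $\overline{\ul{f}(\A^n)}$ is therefore a closed cone, hence contains $0$, and by the fiber dimension bound just established, $\dim \overline{\ul{f}(\A^n)} \ge \dim \A^n - (n-s) = s$ (every nonempty fiber has dimension exactly $n - s$ on a dense open, and at most $\dim$ of the source minus $\dim$ of a generic fiber globally — more carefully, $\dim \overline{\ul{f}(\A^n)} = n - (\text{generic fiber dim}) = n - (n-s) = s$). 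Thus $\overline{\ul{f}(\A^n)} = \A^s$, so $\ul{f}$ is dominant. Finally, since $\ul{f}$ is dominant with all fibers of dimension $\ge n - s = \dim \A^n - \dim \A^s$, the image is closed: indeed a dominant morphism of affine spaces whose every nonempty fiber has the expected dimension is surjective — one can see this because the constructible image contains a dense open $U \subseteq \A^s$, and for $c \notin U$ the fiber $\ul{f}^{-1}(c)$ is still nonempty since it has codimension $\le s$ in $\A^n$ (principal ideal theorem applied to $f_1 - c_1,\ldots,f_s-c_s$) hence is nonempty as a vanishing locus of $s$ polynomials cannot be empty unless... actually $V(g_1,\ldots,g_s)$ in $\A^n$ can be empty, so the cleanest route is the cone argument: $\ul{f}(\A^n)$ is a constructible cone containing a dense open of $\A^s$, and a constructible cone that is dense must be everything, because its complement is a constructible cone of strictly smaller dimension, and if the complement contained a point $c$ it would contain the whole line through $c$, forcing $0$ into the complement — but $0 \in \ul{f}(\A^n)$ since $\ul{f}(0) = 0$, contradiction once we know the punctured line through $c$ meets the image; hmm, this needs the line minus origin in the image).

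The cleanest complete argument, which I would ultimately write up: since $\ul{f}$ is dominant (shown above via the fiber-dimension/cone count), $\ul{f}(\A^n) \supseteq U$ for some dense open $U$. Given any $c \in \A^s$, pick $t \in \overline{K}^\times$; if the scaled point $t\cdot c$ lies in $U$ for some $t$ — which holds unless the entire punctured line $\{t \cdot c : t \ne 0\}$ avoids $U$, impossible since $U$ is dense open and this line is irreducible of dimension $1$ (for $c \ne 0$) meeting $U$ — then $t \cdot c = \ul{f}(x)$ for some $x$, whence $c = \ul{f}(t^{-1}x)$. And $c = 0 = \ul{f}(0)$ is trivially in the image. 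Hence $\ul{f}$ is surjective. The main obstacle — and it is a mild one — is making the fiber-dimension count rigorous enough to conclude dominance: specifically verifying that the generic fiber has dimension exactly $n - s$ rather than merely at least $n-s$, which is exactly the content of the regular sequence hypothesis via the unmixedness/codimension characterization, so this is where Lemma-style input about complete intersections (already in play in the paper) is invoked.
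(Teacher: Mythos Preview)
Your second paragraph already contains a complete proof, though you do not seem to realize it. Once you know that $f_1-c_1,\ldots,f_s-c_s$ is again a regular sequence (via the leading-forms/associated-graded fact you cite), you are done: the quotient $\overline K[x_1,\ldots,x_n]/(f_1-c_1,\ldots,f_s-c_s)$ then has Krull dimension $n-s\ge 0$ and is in particular nonzero, so the fiber over $c$ is nonempty. No further ``promotion to nonemptiness'' is required.

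By contrast, the cone/scaling argument you eventually commit to has a genuine gap. After establishing dominance you assert that the weighted orbit $\{t\cdot c:t\ne 0\}$ must meet the dense open $U\subset\ul{f}(\A^n)$, but an irreducible curve in $\A^s$ can perfectly well lie inside a proper closed subset. Dominance plus cone-equivariance alone are insufficient: for $n=s=2$, $f_1=x_1^2$, $f_2=x_1x_2$, the map is dominant with image $\{c_1\ne 0\}\cup\{0\}$, yet the weighted orbit through $(0,1)$ is $\{0\}\times\overline K$, which never meets any $U\subset\{c_1\ne 0\}$. Your scaling step uses nothing beyond dominance and equivariance, so it cannot distinguish this from the regular-sequence situation; the hypothesis has to re-enter after the dominance step, and in your write-up it never does.

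The paper's proof is different from either of your routes but closest in spirit to the first. It homogenizes: set $g_i=f_i-a_i z^{d_i}$ in $\overline K[x_1,\ldots,x_n,z]$. Since reducing modulo $z$ returns the regular sequence $f_1,\ldots,f_s$, the sequence $z,g_1,\ldots,g_s$ is regular; as these are homogeneous one may permute, so $g_1,\ldots,g_s,z$ is regular too and hence $z$ does not vanish identically on $Z(g_1,\ldots,g_s)$. Any point there with $z\ne 0$ dehomogenizes to a preimage of $(a_1,\ldots,a_s)$. This permutation trick is in effect a self-contained proof of the leading-forms lemma in the case at hand.
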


\begin{proof}
    Let $a_1,\ldots,a_s$ be scalars and consider the forms $g_i = f_i-a_iz^{d_i},$ where $z$ is a new variable and $d_i$ is the degree of $f_i$. Then $z,g_1,\ldots,g_s$ is a regular sequence of forms, hence a regular sequence in any order, so $g_1,\ldots,g_s,z$ is also a regular sequence. In particular, $z$ doesn't vanish identically on $Z(g_1,\ldots,g_s).$  If $(x,z)$ is a point with $g_i(x,z) = 0$ and $z\neq 0$ then we have $f_i(x/z) = a_i$ as desired. 
\end{proof}

The inductive step relies on the following proposition.

\begin{proposition}
    Suppose $m\ge \max(d,s_h+2)$, $\cF$ is absolutely $(Am^{dB},B,1)$-strong and $U\subseteq Z(\cF)$ is a nonempty open subset. Then there is a dense open $W\subseteq Z_m(\cF_{<h})$ such that, for all $(v_1,\ldots,v_m)\in W$, the subspace $V_0 = \sp(v_1,\ldots,v_m)$ satisfies
    \begin{enumerate}
        \item $\overline{\str}(\cF_h\restriction_{V_0}) > \frac{m-s_h}{d2^d}$ and
        \item $Z(\cF_h) \cap U\cap V_0 \neq \emptyset.$
    \end{enumerate}
\end{proposition}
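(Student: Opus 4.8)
The plan is to analyze the incidence variety
\[
X = \left\{ (a,\ul{v})\in \A^m\times V^m : \ul{v}\in Z_m(\cF_{<h}),\ a_1v_1+\ldots+a_mv_m\in Z(\cF_h)\right\}
\]
introduced in Lemma \ref{lem:res-var}, together with the projection $\pi\colon X\to Z_m(\cF_{<h})$ onto the $\ul{v}$-coordinate. By Lemma \ref{lem:res-var}, since $\cF$ is absolutely $(Cm^{dD},D,1)$-strong (absorbing the constants from that lemma into $A,B$), $X$ is an irreducible complete intersection of codimension $|T_m(\cF_{<h})|+s_h$. First I would record that each generic fiber of $\pi$ is nonempty and of the expected dimension: over a generic $\ul{v}\in Z_m(\cF_{<h})$ the fiber is $\{a : \sum a_i v_i\in Z(\cF_h)\}$, which has dimension $m-s_h$ by Lemma \ref{lem:res-var} (codimension count) and irreducibility. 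The key point for part (2) is then a dominance/genericity argument: the open set $U\subseteq Z(\cF)$ pulls back to a constructible subset of $X$, and I need to show its image under $\pi$ contains a dense open $W_2\subseteq Z_m(\cF_{<h})$. This should follow because $U$ is open and nonempty in the irreducible variety $Z(\cF)$, and there is a natural map (or at least correspondence) from $X$ to $Z(\cF)$ sending $(a,\ul v)\mapsto \sum a_iv_i$; the preimage of $U$ is open dense in $X$ (using irreducibility of $X$ and that this map is dominant), and then its image in $Z_m(\cF_{<h})$ is constructible and contains a dense open set. So for $\ul v\in W_2$ there exists $a$ with $\sum a_iv_i\in Z(\cF_h)\cap U$, and of course $\sum a_iv_i\in \sp(v_1,\ldots,v_m)=V_0$, giving (2).

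For part (1), I would use Lemma \ref{lem:gen-str}, which says that a generic tuple of $s_h$ forms of degree $d$ in $m\ge d$ variables has absolute strength $>\frac{m-s_h}{d2^d}$. The subtlety is that $\cF_h\restriction_{V_0}$ is not a generic tuple of forms; rather it is obtained by restricting the fixed forms $\cF_h$ to the $m$-dimensional subspace spanned by $v_1,\ldots,v_m$. However, the coefficients of $\cF_h\restriction_{V_0}$, expressed in the basis $v_1,\ldots,v_m$, are exactly (up to the multinomial constants from Lemma \ref{lem:Taylor-identities}) the values of the Taylor terms $f^e_{i}(v_1,\ldots,v_m)$ — that is, the forms $T_m(\cF_h)$ evaluated at $\ul v$. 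So what must be shown is that the map $Z_m(\cF_{<h})\to (\text{space of }s_h\text{-tuples of degree-}d\text{ forms in }m\text{ vars})$ given by $\ul v\mapsto \cF_h\restriction_{\sp(\ul v)}$ is dominant onto a set meeting the open dense locus of Lemma \ref{lem:gen-str}; equivalently, the image is not contained in the complement. This is the heart of the matter. I would argue it by exhibiting a single point of $Z_m(\cF_{<h})$, or rather a single subspace $V_0$ on which $Z(\cF_{<h})$ imposes the right conditions, such that $\cF_h\restriction_{V_0}$ has large absolute strength — this in turn should come from the fact that $T_m(\cF)$ is absolutely $(C,D,1)$-strong (Lemma \ref{lem:Taylor-strong}) and from Lemma \ref{lem:str-reg}/\ref{lem:res-var} guaranteeing $Z_m(\cF_{<h})$ is large and generic enough that a generic subspace spanned by its points behaves like a generic subspace, so the restriction is "generic enough" for Lemma \ref{lem:gen-str} to apply. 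Concretely: the locus of $\ul v\in V^m$ for which $\overline{\str}(\cF_h\restriction_{\sp\ul v})\le \frac{m-s_h}{d2^d}$ is Zariski closed in $V^m$, and since a fully generic $\ul v\in V^m$ already spans a subspace making the restriction generic (here one can use that restricting to a generic $m$-plane in $V$ preserves high strength up to the bound — compare Lemma \ref{lem:gen-str}'s proof via \cite{LS-Birch}), this closed locus is proper; then I need it to remain proper after intersecting with the irreducible variety $Z_m(\cF_{<h})$, which follows once $Z_m(\cF_{<h})$ is not contained in it. That non-containment is where the strength hypothesis on $\cF$ (transferred to $T_m(\cF)$) does the work: intuitively, $Z_m(\cF_{<h})$ is a complete intersection of high codimension cut out by high-strength forms, hence "spread out" enough that it meets the generic locus.

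Putting it together, I would set $W = W_1\cap W_2$ where $W_1$ is the dense open locus from the part-(1) analysis (complement of the proper closed set above, intersected with $Z_m(\cF_{<h})$) and $W_2$ is the dense open locus from the part-(2) dominance argument; since $Z_m(\cF_{<h})$ is irreducible (it is a complete intersection with prime defining ideal by Lemmas \ref{lem:str-prime} and \ref{lem:Taylor-strong}, applied with the strengthened constants), $W_1\cap W_2$ is again dense open, and any $\ul v\in W$ satisfies both conclusions. The constants $A,B$ in the statement are obtained by feeding the constants needed in Lemmas \ref{lem:res-var}, \ref{lem:str-prime}, and \ref{lem:gen-str} (all of which require $(C',D',1)$- or $(C',D',m+s_h+1)$-strength of $T_m(\cF)$, hence $(C'm^{dD'},D',\cdot)$-strength of $\cF$ by Lemma \ref{lem:Taylor-strong}) through Claim \ref{str-shuffle}.

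The main obstacle I anticipate is part (1): showing that $Z_m(\cF_{<h})$ is not contained in the closed locus where the restriction $\cF_h\restriction_{V_0}$ has small absolute strength. The dimension count alone is not obviously sufficient — one really wants a statement that the restriction map is dominant (or at least non-degenerate) onto the relevant parameter space, and this must exploit the high strength of $\cF$ in an essential way rather than just its codimension. I expect the clean way to handle this is to observe that $\cF_h\restriction_{V_0}$ for $V_0 = \sp(\ul v)$ is, in coordinates, the tuple $T_m(\cF_h)$ "evaluated coefficient-wise" at $\ul v$ (via Lemma \ref{lem:Taylor-identities}), so its absolute strength as a function of $\ul v$ is itself controlled by the strength of $T_m(\cF_h)$ modulo $T_m(\cF_{<h})$ — and then invoke that $T_m(\cF)$ is absolutely $(C,D,1)$-strong with $C,D$ chosen large enough (via Lemma \ref{lem:Taylor-strong}) to force the generic restriction over $Z_m(\cF_{<h})$ to already exceed the Lemma \ref{lem:gen-str} threshold.
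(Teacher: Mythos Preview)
Your argument for part (2) is essentially the paper's: reduce to a polynomial $g$ not vanishing on $Z(\cF)$, pull back along $(a,\ul v)\mapsto a\cdot\ul v$ to a function on the irreducible variety $X$, and project. That is fine.

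The gap is in part (1). You correctly observe that, in the coordinates $v_1,\ldots,v_m$, the restricted tuple $\cF_h\restriction_{\sp(\ul v)}$ has as its coefficients precisely the values $T_m(\cF_h)(\ul v)$, so the question becomes whether the map
\[
T_m(\cF_h)\colon Z_m(\cF_{<h}) \longrightarrow \{\text{tuples of }s_h\text{ degree-}d\text{ forms in }m\text{ variables}\}
\]
hits the dense open locus of Lemma \ref{lem:gen-str}. But none of your proposed mechanisms actually establish this. ``$Z_m(\cF_{<h})$ is spread out because it is cut by high-strength forms'' does not give dominance of a map \emph{out} of it; ``restricting to a generic $m$-plane preserves high strength'' is about generic $\ul v$ in $V^m$, not in $Z_m(\cF_{<h})$; and ``the strength of the restriction is controlled by the strength of $T_m(\cF_h)$ modulo $T_m(\cF_{<h})$'' is not a statement that has been proved anywhere. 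You flag this yourself as the main obstacle, and it is a real one.

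The paper closes it with a single clean stroke you overlooked: Lemma \ref{reg-to-surj}. Since $T_m(\cF)$ is absolutely $(C,D,1)$-strong (Lemma \ref{lem:Taylor-strong}), it cuts out a complete intersection (Lemma \ref{lem:str-prime}), hence forms a regular sequence; by Lemma \ref{reg-to-surj} the polynomial map $T_m(\cF)$ is \emph{surjective} onto affine space. In particular, for any prescribed value $c$ of $T_m(\cF_h)$ there is a $\ul v$ with $T_m(\cF_{<h})(\ul v)=0$ and $T_m(\cF_h)(\ul v)=c$, so $T_m(\cF_h)$ maps $Z_m(\cF_{<h})$ onto the entire coefficient space. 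Surjectivity is far stronger than the dominance you were seeking, and immediately gives that the preimage of the Lemma \ref{lem:gen-str} locus is nonempty (hence dense open, by irreducibility of $Z_m(\cF_{<h})$). With this one lemma inserted, your outline becomes the paper's proof.
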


\begin{proof}
    By lemmas \ref{lem:Taylor-strong} and \ref{lem:str-prime}, $Z_m(\cF_{<h})$ is irreducible. Thus, it suffices to verify that each of the conditions holds separately on a nonempty open set. By possibly shrinking $U$, we may assume that $U = (x\in Z(\cF):g(x)\neq 0)$ for some polynomial $g$. The variety 
    \[
     X = \left( (a,\ul{v})\in \A^m\times V^m: \ul{v}\in Z_m(\cF_{<h}),\ a_1v_1+\ldots+a_mv_m \in Z(\cF_h) \right)
    \]
    is irreducible by lemma \ref{lem:res-var}. Note also that $X$ surjects onto $Z_m(\cF_{<h})$ by projection on $\ul{v}$ and that the polynomial $g'(\ul{a},\ul{v}) := g(\ul{a}\cdot \ul{v})$ does not vanish identically on $X$, hence it cuts down the dimension. It follows that for a dense open set of $\ul{v} \in Z_m(\cF_{<h}),$ $g'(\cdot,\ul{v})$ doesn't vanish identically on the fiber $X(\ul{v}),$ which is exactly the second condition. 

    For the first condition, use the fact that $T_m(\cF)$ is a regular sequence by lemmas \ref{lem:Taylor-strong} and \ref{lem:str-prime}. By lemma \ref{reg-to-surj} this implies that the corresponding polynomial map is surjective. In particular, $T_m(\cF_h)\restriction_{Z_m(\cF_{<h})}$ is surjective. By lemma \ref{lem:gen-str}, this implies that the first condition holds on a dense open subset of $Z_m(\cF_{<h})$. 
\end{proof}

\begin{corollary}
    Let $h\ge 2$ and suppose that theorem \ref{thm:dense} holds for towers of height $h-1$. Then it holds for towers of height $h$.
\end{corollary}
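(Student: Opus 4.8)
The plan is to reduce the inductive step to the preceding proposition, applied with a suitably large auxiliary dimension $m$, together with the height-one case (Corollaries \ref{Brauer-base} and \ref{base-num}): one restricts $\cF$ to a subspace spanned by $K$-rational points of the Taylor-expanded lower tower $T_m(\cF_{<h})$, where the surviving top layer has enough absolute strength to be handled by the base case. Throughout, $r$ denotes $\phi_2+\ldots+\phi_d$ in the Brauer case and $1$ in the odd-degree number field case, and $\cF$ is a tower of height $h$ that is absolutely $(C,D,r)$-strong for constants $C,D$ large enough in terms of $d$, to be pinned down below. Since $Z(\cF)$ is irreducible by Lemma \ref{lem:str-prime} and $\overline{Z(\cF)(K)}$ is Galois-stable, hence defined over $K$, it suffices to show that every nonempty $K$-rational open $U\subseteq Z(\cF)$ meets $Z(\cF)(K)$; I will assume $U(K)=\emptyset$ and derive a contradiction.

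First I would fix $m$. Let $C_1,D_1$ be constants for which the height-one case of Theorem \ref{thm:dense} applies to any single-layer tower that is absolutely $(C_1,D_1,r)$-strong, and let $C_{h-1},D_{h-1}$ be the constants furnished by the inductive hypothesis in height $h-1$. Choose $m$ minimal subject to $m\ge\max(d,s_h+2)$ and $\frac{m-s_h}{d2^d}>C_1(s_h+r)^{D_1}$; then $m$ is a polynomial in $s_h+r$. The bookkeeping step is to take $C,D$ large enough that absolute $(C,D,r)$-strength of $\cF$ forces, layer by layer, both that $\cF$ is absolutely $(Am^{dB},B,1)$-strong, so the proposition applies with our $m$, and that $\cF_{<h}$ is absolutely $(C_{h-1}m^{dD_{h-1}},D_{h-1},r)$-strong, so that by Lemma \ref{lem:Taylor-strong} the height-$(h-1)$ tower $T_m(\cF_{<h})$ is absolutely $(C_{h-1},D_{h-1},r)$-strong. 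This is possible precisely because $m$ is polynomial in $s_h+r\le s_i+\ldots+s_h+r$, so these demands are each bounded by $C(s_i+\ldots+s_h+r)^D$ in layer $i$. This constant-chasing is the one step I expect to be fiddly; everything else is a direct chaining of the cited results.

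Now apply the proposition to $\cF$ and $U$ with this $m$ to get a dense open $W\subseteq Z_m(\cF_{<h})$ satisfying its conclusions (1) and (2); replacing $W$ by the intersection of its finitely many Galois conjugates (still dense open, since $Z_m(\cF_{<h})$ is irreducible), we may take $W$ defined over $K$. By the inductive hypothesis applied to $T_m(\cF_{<h})$, the set $Z_m(\cF_{<h})(K)$ is Zariski dense, so it meets $W$; choose $\ul v=(v_1,\ldots,v_m)\in W(K)$ and put $V_0=\sp(v_1,\ldots,v_m)$, a $K$-subspace contained in $Z(\cF_{<h})$. By conclusion (1) and the defining inequality for $m$, the $s_h$ forms $\cF_h\restriction_{V_0}$ satisfy $\overline{\str}(\cF_h\restriction_{V_0})>C_1(s_h+r)^{D_1}$, so the height-one case of Theorem \ref{thm:dense} applies to them on $V_0$ and shows that $Z(\cF_h\restriction_{V_0})(K)=(Z(\cF)\cap V_0)(K)$ is Zariski dense in $Z(\cF)\cap V_0$. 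By conclusion (2), $Z(\cF)\cap V_0\cap U$ is a nonempty open subset of $Z(\cF)\cap V_0$, hence contains a $K$-point $x$; then $x\in V_0\subseteq Z(\cF_{<h})$ and $\cF_h(x)=0$, so $x\in Z(\cF)(K)\cap U$, contradicting $U(K)=\emptyset$ and completing the induction.
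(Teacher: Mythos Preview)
Your proof is correct and follows essentially the same approach as the paper's: choose $m$ polynomial in $s_h+r$ so that $\frac{m-s_h}{d2^d}$ exceeds the height-one threshold, apply the proposition to obtain $W$, use the height $h-1$ hypothesis on $T_m(\cF_{<h})$ to find a $K$-point $\ul v\in W$, and then invoke the base case on $\cF_h\restriction_{V_0}$ to hit $U$. Your write-up is in fact more careful than the paper's in two places---you explicitly note that $m$ being polynomial in $s_h+r$ is what makes the constant bookkeeping go through, and you address the rationality of $W$ via Galois descent (though this last step is not strictly needed: Zariski density of $K$-points in $Z_m(\cF_{<h})$ already forces them to meet any dense open, $K$-rational or not).
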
 

\begin{proof}
    Let $A,B$ be such that theorem \ref{thm:dense} holds with these constants for all towers of degree $\le d$ and height $< h$. Apply the previous proposition with either 
    \begin{align*}
    m &= d2^d(A(s_h+\phi_2+\ldots+\phi_d)^B+s_h) \textnormal{ or} \\
    m &= d2^d(As_h^B+s_h).
    \end{align*}
    
     By claim \ref{str-shuffle}, there exist $A',B'$ such that if $\cF$ is absolutely $(A',B',1)$-strong then it is absolutely $(Am^{dB},B,1)$-strong. The tower $T_m(\cF_{<h})$ has degree $(d_1,\ldots,d_{h-1})$ and is $(A,B,1)$-strong by lemma \ref{lem:Taylor-strong} so by our assumption we can choose $\ul{v}\in W\subseteq Z_m(\cF_{<h})$ to be a $K$-point. By  corollaries \ref{Brauer-base} or \ref{base-num} we get that there exists $ a\in K^m$ with $a\cdot\ul{v} \in Z(\cF_h) \cap U$. Since $\ul{v} \in Z_m(\cF_{<h})$, we have obtained a $K$-point  $a\cdot\ul{v} \in Z(\cF) \cap U$ as desired. 
\end{proof}

\section{Properties of multi-linear forms}

We now get to work in earnest, building towards our main technical result -- theorem \ref{thm:reg}. We begin by recalling the standard correspondence between polynomials and multi-linear forms.

\begin{definition}\label{def:polar}
    For a homogeneous polynomial $f$  of degree $d$, its \emph{polarization} is the multi-linear form  $\Tilde{f}:V^d\to K$ given by $\Tilde{f}(h_1,\ldots,h_d) = f^{(1,\ldots,1)}(h_1,\ldots,h_d)$.
\end{definition}

Note that this correspondence preserves strength.

\begin{lemma}\label{lem:str-polar}
    We have 
    \[
    2^{-d} \cdot \str(\Tilde{f}) \le \str(f) \le \str(\Tilde{f}).
    \]
\end{lemma}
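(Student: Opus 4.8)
The plan is to prove the two inequalities separately, exploiting the fact that $f$ and its polarization $\tilde f$ can be recovered from one another by the standard restitution/polarization identities, and that both operations only mildly affect the number of terms in a decomposition as a sum of products of lower-degree forms.

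\medskip

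\noindent\textbf{The easy direction: $\str(\tilde f) \le \text{(something depending on) }\str(f)$, and the clean bound $\str(f)\le\str(\tilde f)$.} First I would do $\str(f) \le \str(\tilde f)$. If $\tilde f = \sum_{i=1}^r G_i H_i$ as multilinear forms on $V^d$ (with each $G_i,H_i$ multilinear of positive multidegree), then restrict to the diagonal: by Lemma \ref{lem:Taylor-identities} (with all $e_j=1$) we have $\tilde f(x,\ldots,x) = d!\, f(x)$, and each $G_i(x,\ldots,x)$, $H_i(x,\ldots,x)$ is a form in $K[V]$ of positive degree $< d$ (the degrees of $G_i$ and $H_i$ add up to $d$). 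Dividing by $d!$ (legitimate in characteristic zero) exhibits $f$ as a sum of $r$ products of lower-degree forms, so $\str(f)\le r$. Taking the infimum over decompositions of $\tilde f$ gives $\str(f)\le\str(\tilde f)$.

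\medskip

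\noindent\textbf{The other direction: $2^{-d}\str(\tilde f)\le\str(f)$.} Suppose $f = \sum_{i=1}^r g_i h_i$ with $g_i$ of degree $a_i$ and $h_i$ of degree $b_i=d-a_i$, $0<a_i<d$. I want to polarize this identity. The multilinear form $\tilde f(h_1,\dots,h_d)$ is, up to the constant $1/d!$, the full mixed partial: $\tilde f(y_1,\dots,y_d) = \frac{1}{d!}\,\partial_{t_1}\cdots\partial_{t_d}\big|_{t=0} f(t_1y_1+\cdots+t_dy_d)$. Applying this to $f=\sum g_ih_i$ and using the Leibniz rule, $\partial_{t_1}\cdots\partial_{t_d}\big(g_i(t\cdot y)h_i(t\cdot y)\big)$ expands as a sum over ways to split the $d$ derivatives into a group of $a_i$ hitting $g_i$ and a group of $b_i$ hitting $h_i$; there are $\binom{d}{a_i}$ such splittings, and each term is a product of a (partially polarized, $a_i$-linear) piece of $g_i$ and a ($b_i$-linear) piece of $h_i$. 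Hence $\tilde f$ is a sum of at most $\sum_{i=1}^r \binom{d}{a_i} \le r\cdot 2^d$ products of multilinear forms of positive multidegree, i.e. $\str(\tilde f)\le 2^d r$. Taking the infimum over decompositions of $f$ yields $\str(\tilde f)\le 2^d\,\str(f)$, equivalently $2^{-d}\str(\tilde f)\le\str(f)$.

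\medskip

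\noindent\textbf{Where the care is needed.} The only subtle points are bookkeeping ones: making sure that the pieces obtained after (partial) polarization genuinely have \emph{positive} multidegree in each factor — which holds because $0<a_i<d$ forces both the $a_i$-linear and $b_i$-linear parts to be nonconstant — and that the characteristic-zero hypothesis (implicit throughout the paper) is used to divide by the multinomial/factorial constants. I expect the main (still entirely routine) obstacle to be writing the Leibniz expansion cleanly enough that the count $\sum_i\binom{d}{a_i}\le 2^d r$ is manifest; no genuinely hard idea is involved, and the two displayed inequalities $2^{-d}\str(\tilde f)\le\str(f)\le\str(\tilde f)$ follow by combining the two halves above.
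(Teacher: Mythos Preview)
Your proposal is correct and follows essentially the same route as the paper: restrict to the diagonal and divide by $d!$ for the right-hand inequality, and polarize a decomposition of $f$ (Leibniz / Taylor expansion) to get at most $\sum_i\binom{d}{a_i}\le 2^d r$ terms for the left-hand inequality. One small slip: in the first direction, the factors $G_i,H_i$ in a strength decomposition of $\tilde f$ are arbitrary forms of lower degree on $V^d$, not necessarily multilinear; your diagonal-restriction argument works verbatim in that generality, so this does not affect the proof.
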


\begin{proof}
    Suppose first that $\str(\Tilde{f}) = r$ and write $\Tilde{f} = \sum_{i=1}^r p_i q_i$, where $p_i,q_i$ are forms of degree $<d$ on $V^d$. Plugging in $h_1=\ldots=h_d = x$, we get 
    \[
    d! f(x) = \Tilde{f}(x,\ldots,x) = \sum_{i=1}^r p_i(x,\ldots,x) q_i(x,\ldots,x),
    \]
    where the equality on the left follows from lemma \ref{lem:Taylor-identities}. This implies 
    \[
    \str(f) = \str(d! f) \le r = \str(\Tilde{f}),
    \]
    proving the inequality on the right hand side.

    Now suppose $\str(f) = r$ and write $f = \sum_{i=1}^r p_i q_i$, where $p_i,q_i$ are forms of positive degrees $d_i,d-d_i$.  Comparing the corresponding terms of the Taylor expansion of $f(h_1+\ldots+h_d)$ and $\sum_{i=1}^r p_i(h_1+\ldots+h_d)\cdot q_i(h_1+\ldots+h_d)$, we get 
    \[
    \Tilde{f}(h_1,\ldots,h_d) = \sum_{i=1}^r \sum_{J\in \binom{[d]}{d_i}}  \Tilde{p}_i((h_j)_{j\in J}) \cdot \Tilde{q}_i((h_j)_{j\in [d]\setminus J}).  
    \]
    This proves the inequality $\str(\Tilde{f}) \le 2^d\cdot \str(f)$.
\end{proof}

Applying the correspondence of definition \ref{def:polar} to a tower of homogeneous polynomials gives rise to a multi-linear tower. 

\begin{definition}
    A \emph{multi-linear tower} $\cF$ on $V^{[d]}$ is a tower where for each $i\in [h]$ there is some $I_i\subset [d]$ such that $(f_{i,j})_{j\in [s_h]}$ are all multi-linear forms on $V^{I_i}$. For a  tower $\cF$ with $d = \max_i d_i$, its polarization $\Tilde{\cF}$ is a multi-linear tower on $V^{[d]}$. For every $i\in [h]$ and $I\in \binom{[d]}{d_i}$ we get a layer $\Tilde{\cF}_i^I$ whose forms are $\Tilde{f}_{i,j}^I:V^I\to K$. The layers of $\Tilde{\cF}$  are ordered consecutively in the different $i$ and arbitrarily in the $I$. Given $i,I$ we write $\Tilde{\cF}_{<(i,I)}$ for the tower consisting of $\Tilde{\cF}_{i'}^{I'}$ for $i'<i$ or $i' = i$ and $I'\neq I$. 
\end{definition}

The correspondence for towers also preserves relative strength.

\begin{lemma}\label{lem:rel-str-polar}
   For every $i\in [h] $ and $I\in \binom{[d]}{d_i}$ we have 
    \[
    2^{-d} \cdot \str_{\Tilde{\cF}_{<(i,I)}} (\Tilde{\cF}_i^I) \le \str_{\cF_{<i}} (\cF_i) \le \str_{\Tilde{\cF}_{<(i,I)}} (\Tilde{\cF}_i^I).
    \]
\end{lemma}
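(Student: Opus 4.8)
The plan is to mimic the proof of Lemma~\ref{lem:str-polar} (the non-relative case) but carry everything out modulo the appropriate ideals, taking care that the ambient quotient rings on the two sides match up under polarization. Fix $i \in [h]$ and $I \in \binom{[d]}{d_i}$. The essential point we need is that reducing a polynomial identity modulo $(\cF_{<i})$ corresponds, after polarizing, to an identity modulo $(\Tilde{\cF}_{<(i,I)})$, and conversely. So the first step is to record this dictionary: a form $g$ of degree $d_i$ on $V$ has $\str_{\cF_{<i}}(g) \le r$ iff $g = \sum_{k=1}^r p_k q_k + \sum_{\ell} u_\ell w_\ell$ where the $p_k,q_k$ are forms of lower degree and the $u_\ell w_\ell$ witness membership in $(\cF_{<i})$ (with $u_\ell \in \cF_{<i}$). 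Polarizing such an identity on $V^I$ produces, by the same computation as in Lemma~\ref{lem:str-polar}, an expression for $\Tilde{g}^I$ as a sum of at most $2^d r$ products of lower-degree multilinear forms plus a combination of the polarized forms of $\cF_{<i}$ on sub-tuples — and those polarized forms are exactly (linear combinations of) the elements of $\Tilde{\cF}_{<(i,I)}$, because every layer $\Tilde{\cF}_{i'}^{I'}$ with $i' < i$ appears, as does $\Tilde{\cF}_i^{I'}$ for $I' \neq I$.

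For the right-hand inequality $\str_{\cF_{<i}}(\cF_i) \le \str_{\Tilde{\cF}_{<(i,I)}}(\Tilde{\cF}_i^I)$: suppose $\sum_j a_j \Tilde{f}_{i,j}^I$ has relative strength $r$ in $K[V^{[d]}]/(\Tilde{\cF}_{<(i,I)})$, so it equals $\sum_{k=1}^r p_k q_k$ modulo that ideal. Set all the variables in the tuples indexed by $I$ equal to $x$ (and choose the remaining tuples freely, or simply restrict to $V^I$). Since every generator of $\Tilde{\cF}_{<(i,I)}$ is a polarization $\Tilde{f}_{i',j}^{I'}$ and, by Lemma~\ref{lem:Taylor-identities}, the diagonal restriction of such a polarization is a scalar multiple of $f_{i',j}$ (or of a term that still lies in $(\cF_{<i})$ after diagonalizing — here one must check that the $I' \neq I$, $i' = i$ terms also diagonalize into $(\cF_{<i})$, which they do since $\Tilde{f}_{i,j}^{I'}$ restricted diagonally is a multiple of $f_{i,j} \in \cF_i$... wait, that is not in $\cF_{<i}$). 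This is the subtlety: when $i' = i$ but $I' \neq I$, the generator $\Tilde{f}_{i,j}^{I'}$ of $\Tilde{\cF}_{<(i,I)}$, restricted diagonally on the $I$-block, involves the $I'$-tuples which are disjoint from $I$, so it does not collapse to something in $(\cF_{<i})$; instead one should set the $I'$-tuples to zero (or to generic values making those terms vanish), which is legitimate because $I' \cap I$ may be nonempty. I expect this bookkeeping — making sure that the diagonal/zero substitution kills precisely the $\Tilde{\cF}_{<(i,I)}$-part and turns the rest into a genuine relative-strength witness over $K[V]/(\cF_{<i})$ for some $a_j f_{i,j}$ — to be the main obstacle, and it is exactly analogous to, but more delicate than, the plugging-in step $h_1 = \cdots = h_d = x$ in Lemma~\ref{lem:str-polar}.

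For the left-hand inequality $2^{-d}\str_{\Tilde{\cF}_{<(i,I)}}(\Tilde{\cF}_i^I) \le \str_{\cF_{<i}}(\cF_i)$: take a nontrivial combination $g = \sum_j a_j f_{i,j}$ with $\str_{\cF_{<i}}(g) = r$, write $g = \sum_{k=1}^r p_k q_k$ modulo $(\cF_{<i})$, i.e. $g = \sum_k p_k q_k + \sum_\ell u_\ell w_\ell$ with $u_\ell \in \cF_{<i}$. Polarize the whole identity on $V^I$ as in the second half of the proof of Lemma~\ref{lem:str-polar}: the left side becomes $\sum_j a_j \Tilde{f}_{i,j}^I$, the $p_k q_k$ terms contribute at most $r \cdot 2^d$ products of strictly-lower-degree multilinear forms (splitting each $[d_i]$ into the support sizes of $p_k$ and $q_k$ over subsets $J \subseteq I$), and each $u_\ell w_\ell$ term polarizes into a sum over splittings $J$ of $\Tilde{u}_\ell^{J} \cdot (\text{lower-degree factor})$ where $\Tilde{u}_\ell^{J}$, after relabeling the coordinate blocks $J$ to lie in the canonical copies, is one of the generators of $\Tilde{\cF}_{<(i,I)}$ (this uses $\deg u_\ell < d_i \le d_i$... actually $u_\ell$ has degree $\le$ that of some element of $\cF_{<i}$, and its layer index is $< i$, so $\Tilde{u}_\ell^J \in \Tilde{\cF}_{<(i,I)}$). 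Hence $\Tilde{f}_i^I$-combination has relative strength $\le 2^d r$ modulo $(\Tilde{\cF}_{<(i,I)})$, which is the claim. Finally, to pass from a single combination to the collective relative strength $\str_{\cF_{<i}}(\cF_i) = \min_a \str(\sum a_j f_{i,j})$, one observes that polarization is linear in the $a_j$, so a witnessing combination on one side gives a witnessing combination on the other with the same coefficients, and the min's match up (up to the $2^d$ factor); this is routine and I would state it in one sentence. I would therefore structure the write-up as: (1) the dictionary lemma for membership in the two ideals, (2) the right inequality via diagonal-plus-zero substitution, (3) the left inequality via polarizing the witnessing identity, (4) a one-line reduction from single combinations to collective strength.
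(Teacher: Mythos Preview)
Your approach is correct and essentially identical to the paper's: polarize a low-strength witness for the left inequality, and plug in $h_\ell = x$ for $\ell\in I$, $h_\ell = 0$ for $\ell\notin I$ for the right inequality. Your worry about the $i'=i$, $I'\neq I$ generators is a non-issue: since $|I'|=|I|=d_i$ and $I'\neq I$, there is some $\ell\in I'\setminus I$, and setting $h_\ell=0$ annihilates the multilinear form $\Tilde{f}_{i,j}^{I'}$ outright --- so these terms go to zero, not to something you need to absorb into $(\cF_{<i})$. The paper's proof of the right inequality is one sentence with exactly this substitution; your step (1) ``dictionary lemma'' is unnecessary scaffolding once you see this.
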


\begin{proof}
        The proof of the left inequality is identical to lemma \ref{lem:str-polar}. 
        
        For the right inequality, suppose that $f = \sum_{j=1}^{s_i} \sum_{I\in \binom{[d]}{d_i}} a_j^I \Tilde{f}^I_{i,j}$ is a nontrivial linear combination with $\str_{\Tilde{\cF}_{<(i,I)}} (f) = r$. Then there exists some $I_0\in \binom{[d]}{d_i}$ such that $a_j^{I_0}$ are not all zero.  Plugging in $h_\ell = x$ for $\ell \in I_0$ and $h_\ell = 0$ for $\ell\notin I_0$, we get $\str_{\cF_{<i}} (d_i! \sum_{j=1}^{s_i} a_j^{I_0}f_{i,j}) \le r$, which proves the claim.  
\end{proof}

Due to this equivalence between relative rank for the original tower and for its polarization, it is sufficient to prove theorem \ref{thm:reg} for multi-linear towers.

\begin{corollary}\label{cor:ml-reduction}
    Suppose that theorem \ref{thm:reg} holds for multi-linear towers. Then it holds for all towers.
\end{corollary}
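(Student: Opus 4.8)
The plan is to reduce the regularity statement for an arbitrary tower $\cF$ of degree $(1,\ldots,d)$ to the corresponding statement for its polarization $\Tilde{\cF}$, using the two-sided comparison of relative strengths in Lemma \ref{lem:rel-str-polar}. The key point is that absolute strength and relative strength are each controlled up to a multiplicative constant $2^d$ depending only on $d$, and that passing from $\cF$ to $\Tilde{\cF}$ only multiplies the number of forms in each layer by at most $2^d$ as well (each layer $\cF_i$ of $s_i$ forms of degree $d_i$ produces the layers $\Tilde{\cF}_i^I$ for $I \in \binom{[d]}{d_i}$, so at most $\binom{d}{d_i}2^{d}\cdot\,$wait — precisely $\binom{d}{d_i} \le 2^d$ layers, each with $s_i$ forms). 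Since the hypotheses and conclusions of Theorem \ref{thm:reg} are ``$(C',D',r)$-strong $\Rightarrow$ absolutely $(C,D,r)$-strong'' with polynomial bounds in $s_i + \cdots + s_h + r$, absorbing a factor of $2^d$ into the constants is harmless.

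Concretely, here is how I would run the argument. Suppose Theorem \ref{thm:reg} holds for multi-linear towers, and fix $C, D, d$. Let $C'', D''$ be the constants produced by the multi-linear case for the input $(2^d C, D, d)$ — i.e., if a multi-linear tower of degree $\le d$ is $(C'', D'', r)$-strong then it is absolutely $(2^d C, D, r)$-strong. Now given a tower $\cF$ of degree $(1,\ldots,d)$ in $K[x_1,\ldots,x_n]$ which is $(C', D', r)$-strong for $C', D'$ to be chosen, I claim its polarization $\Tilde{\cF}$ is $(C'', D'', r)$-strong. Indeed, for the layer $\Tilde{\cF}_i^I$ sitting above $\Tilde{\cF}_{<(i,I)}$, Lemma \ref{lem:rel-str-polar} gives $\str_{\Tilde{\cF}_{<(i,I)}}(\Tilde{\cF}_i^I) \ge \str_{\cF_{<i}}(\cF_i) > C'(s_i + \cdots + s_h + r)^{D'}$; on the other hand the number of forms in $\Tilde{\cF}_i^I$ and all higher layers is bounded by $2^d(s_i + \cdots + s_h)$, so it suffices to take $C', D'$ large enough (in terms of $C'', D'', d$) that $C'(t + r)^{D'} > C''(2^d t + r)^{D''}$ for all $t \ge 0$ — a routine choice, e.g. $D' = D''$ and $C' = 2^{dD''}C''$ works since $2^d t + r \le 2^d(t+r)$. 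Hence $\Tilde{\cF}$ is $(C'', D'', r)$-strong, so by the multi-linear case it is absolutely $(2^d C, D, r)$-strong: for each $(i,I)$, $\str_{\Tilde{\cF}_{<(i,I)}}(\Tilde{\cF}_i^I \otimes \ol{K}) > 2^d C(\cdots)^D$. Applying the \emph{left} inequality of Lemma \ref{lem:rel-str-polar} over $\ol{K}$ then gives $\str_{\cF_{<i}}(\cF_i \otimes \ol{K}) \ge 2^{-d}\str_{\Tilde{\cF}_{<(i,I)}}(\Tilde{\cF}_i^I \otimes \ol{K}) > C(s_i + \cdots + s_h + r)^D$, i.e. $\cF$ is absolutely $(C, D, r)$-strong, which is exactly the conclusion of Theorem \ref{thm:reg} for $\cF$.

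There is essentially no serious obstacle here; the statement is a bookkeeping reduction. The only points requiring a little care are (i) checking that Lemma \ref{lem:rel-str-polar} applies unchanged after base change to $\ol{K}$ — it does, since polarization commutes with field extension and the lemma's proof is purely formal — and (ii) tracking the layer count of $\Tilde{\cF}$ correctly, namely that the total size $\sum_{i,I} |\Tilde{\cF}_i^I| = \sum_i \binom{d}{d_i}s_i \le 2^d \sum_i s_i$, and more importantly that the ``tail sum'' appearing in the definition of $(A,B,r)$-strength for the layer $(i,I)$ is bounded by $2^d(s_i + \cdots + s_h)$ regardless of where $I$ sits among the layers with that $i$. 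Once these are in hand, choosing the constants is immediate and the corollary follows.
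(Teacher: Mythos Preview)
Your proposal is correct and follows exactly the paper's approach: the paper's proof is the single sentence ``Apply the previous lemma to both $\cF$ and $\cF\otimes_K \ol{K}$,'' and you have simply unpacked that sentence, using the right-hand inequality of Lemma~\ref{lem:rel-str-polar} over $K$ to transfer $(C',D',r)$-strength from $\cF$ to $\Tilde{\cF}$, invoking the multi-linear case, and then using the left-hand inequality over $\ol{K}$ to come back. One minor point of bookkeeping: in your final step you write ``for each $(i,I)$, $\str_{\Tilde{\cF}_{<(i,I)}}(\Tilde{\cF}_i^I \otimes \ol{K}) > 2^d C(\cdots)^D$,'' but the absolutely-strong hypothesis on $\Tilde{\cF}$ bounds $\str$ relative to the \emph{preceding} layers in the fixed total order, which is a priori only a subset of $\Tilde{\cF}_{<(i,I)}$; to get the bound you need, take $I$ to be the last layer at level $i$ (so that ``preceding'' equals $\Tilde{\cF}_{<(i,I)}$), which suffices since the left inequality of the lemma need only be applied for a single $I$ per $i$.
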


\begin{proof}
    Apply the previous lemma to both $\cF$ and $\cF\otimes_K \ol{K}$.
\end{proof}

In light of the above, we henceforth restrict our attention to multi-linear towers. In this setting there is a very simple geometric condition which ensures density of rational points.

\begin{proposition}\label{dense-ml}
    If $K$ is an infinite field and $\ul{F} = (F^I_i)$ is a collection of multi-linear forms on $V^{[d]}$ which cut out an irreducible complete intersection, then their rational zeros are Zariski dense.
\end{proposition}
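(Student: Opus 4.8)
The plan is to exploit the multi-linear structure, which makes the zero set a union of affine-space "slices" inside $V^{[d]}$, and to use this to manufacture many rational points once we have one. First I would reduce to showing that $Z(\ul{F})$ contains a rational point; the density statement then follows by an inductive/geometric bootstrapping. Concretely, fix one of the factors, say the last copy of $V$, and write the forms $F_i^I$ with $d\in I$ as linear forms in the variable $x_d\in V$ whose coefficients are multi-linear forms in the remaining variables $(x_1,\ldots,x_{d-1})$ (over those $I$ with $d\in I$); the $F_i^I$ with $d\notin I$ do not involve $x_d$ at all. Thus for a fixed value of $(x_1,\ldots,x_{d-1})$ satisfying the $d$-free equations, the remaining equations in $x_d$ are \emph{linear} in $x_d$, so the fiber is an affine subspace of $V$. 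This is the key structural observation: $Z(\ul{F})$ is fibered over a subvariety of $V^{[d]\setminus\{d\}}$ with affine-space fibers, and over an open locus these fibers are honest linear subspaces of positive (in fact predictable) dimension because $Z(\ul{F})$ is a complete intersection of the expected dimension.

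The main work is then to produce a rational point. Since $K$ is infinite, it suffices to find a single point $p=(p_1,\ldots,p_d)\in Z(\ul{F})(K)$, and then vary. Here I would argue as follows: the projection $\pi\colon Z(\ul{F})\to V^{[d]\setminus\{d\}}$ has image a constructible set whose closure $Y$ is a subvariety; because $Z$ is irreducible (given) and a complete intersection of the right codimension, a dimension count forces the generic fiber of $\pi$ to be a linear subspace of $V$ of dimension exactly $\dim Z - \dim Y \ge 1$. Over the dense open locus of $Y$ where the fiber is a genuine linear subspace, choosing a rational point of $Y$ inductively (this is where one recurses on $d$: $Y$ itself, after intersecting with generic rational linear slices to cut it down, is again cut out by multi-linear forms, or one simply uses that an affine space over an infinite field has a dense set of rational points once we are down to a linear situation) and then a rational point of the linear fiber gives a rational point of $Z$. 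Once one rational point $p$ exists, density follows: through $p$ we get a whole rational linear subspace of $Z$ (the fiber of $\pi$ over $\pi(p)$, assuming $\pi(p)$ lies in the good locus — which we can arrange), and sliding the base point around the good locus of $Y$, whose rational points are themselves Zariski dense by the inductive hypothesis, sweeps out a Zariski-dense subset of $Z$.

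The step I expect to be the main obstacle is making the fibration/dimension bookkeeping rigorous and uniform: one must check that the "bad locus" of $Y$ (where the fiber jumps in dimension or fails to be a linear space, e.g. where the linear system in $x_d$ becomes inconsistent or drops rank) is a proper closed subset, so that irreducibility of $Z$ really does guarantee a dense open locus with linear fibers of the expected positive dimension, and one must set up the recursion on $d$ so that the smaller-dimensional problem is again of the same type (a system of multi-linear forms cutting out an irreducible complete intersection over an infinite field) — intersecting with sufficiently generic rational linear subspaces in the spare $V$-factors to reduce the number of variables while preserving irreducibility and the complete-intersection property via a Bertini-type argument. I would handle the base case $d=1$ (or the case where some $V$-factor appears in every equation) directly: there the forms are linear, $Z$ is a linear subspace, and rational points are obviously dense since $K$ is infinite.
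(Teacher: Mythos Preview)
Your approach is essentially the paper's: induct on $d$, project onto $V^{[d-1]}$, and use that the fibers in the last coordinate are linear subspaces (hence have dense $K$-points over an infinite field). The paper's execution dissolves what you flag as ``the main obstacle'': since the forms are multi-linear, the equations in $x_d$ are \emph{homogeneous} linear and therefore never inconsistent, so every fiber is an honest linear subspace and there is no bad locus to excise; moreover the image of the projection is exactly $Z\bigl((F^I_i)_{I\subseteq [d-1]}\bigr)$, which is irreducible (image of an irreducible set) and a complete intersection by a direct dimension count, so the inductive hypothesis applies immediately without any Bertini-type slicing. Your separate ``find one rational point, then bootstrap'' step is also unnecessary (and trivial anyway, since $0\in Z(K)$): once $Z'(K)$ is dense in $Z'$, the union $\bigcup_{x\in Z'(K)}\{x\}\times Z(x)$ already lies in $\overline{Z(K)}$ and is dense in $Z$.
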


\begin{proof}

    By induction on $d.$ The base case $d=1$ is immediate. For the inductive step, we need to show that $Y = \overline{Z(K)} \subset Z$ has full dimension. Consider the projection of $Z$ on $V^{[d-1]}$, denoted $Z'$. This is the image of an irreducible set, hence is irreducible itself. The set $Z'$ is the zero locus of $(F^I_i)_{I\subset [d-1]}$, so it is also a complete intersection. By inductive hypothesis, $Z'(K)$ is dense in $Z'$. For each point $x\in Z'(K),$ the fiber $Y(x)$ is a closed subset of the linear subspace $Z(x)\subset V_d$ which contains all the $K$-points, so $Y(x) = Z(x).$ This implies that $  \dim Y \ge \dim Z' + \dim V_d - r = \dim Z$, where $r$ is the number of equations involving the $d$-th coordinate.
   
\end{proof}
    
For multi-linear forms, strength admits a natural refinement to partition rank.

\begin{definition}
    Let $F:V^{[d]}\to K $ be a multi-linear form. We say that $F$ has partition rank one if $F(x) = G(x_I)\cdot H(x_{I^c})$ for a proper nonempty subset $I\subset [d]$ and $G,H$ multi-linear forms. The \emph{partition rank} of $F$ is 
    \[
    \prk(F) = \min\{r: F = F_1+\ldots+F_r\},
    \]
    where each $F_i:V^{[d]}\to K$ is multi-linear of partition rank one. If $I\subset [d]$, $F:V^I\to K$ is multi-linear and $G_i:V^{I_i}\to K$ are multi-linear then the \emph{relative partition rank} of $F$ on $\ul{G}$ is 
    \[
    \prk_{\ul{G}}(F) = \min\{\prk(F+H) | H:V^I\to K \textnormal{ is multi-linear and } H\in (\ul{G})\}
    \]
\end{definition}

In a similar way, Birch rank may be refined to geometric rank.

\begin{definition}
    The \emph{geometric rank} of $F:V^I\to K$ is 
    \[
    \grk(F) = \codim (x:F(x_{I\setminus\{i_0\}},\cdot)\equiv 0),
    \]
    where $i_0\in I$ is any element. Note that this quantity does not depend on the choice of $i_0.$
    The \emph{relative geometric rank} of $F$ on $\ul{G}$ is 
    \[
    \grk_{\ul{G}} (F) = \min_{i_0\in I} \codim_{Z(\ul{G})} (x\in Z(\ul{G}): F(x_{I\setminus\{i_0\}},\cdot),\ul{G}^{i_0}(x_{[d]\setminus \{i_0\}},\cdot ) \textnormal{ are linearly dependent} ),
    \]
    where $\ul{G}^{i_0}$ are the multi-linear forms in $\ul{G}$ which involve $i_0$.
\end{definition}

Using these definitions, we can refine the definitions of strong/regular towers to the multi-linear setting in the obvious way. As in the non-relative setting, low relative partition rank implies low relative geometric rank:

\begin{claim}\label{cl:rk-sing}
   We have $\grk_{\ul{G}}(F) \le \overline{\prk}_{\ul{G}} (F).$
\end{claim}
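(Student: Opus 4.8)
The plan is to show that a relative partition rank one term $F_1 = G(x_I) H(x_{I^c})$ forces linear dependence of the relevant differentials on a large subvariety, and then sum over the $r = \overline{\prk}_{\ul G}(F)$ terms. Work over $\ol K$ throughout, which is legitimate since the statement is about $\overline{\prk}_{\ul G}$; rename $\ol K$ to $K$ for brevity. Write $\overline{\prk}_{\ul G}(F) = r$, so there is a multilinear $H_0 \in (\ul G)$ with $\prk(F + H_0) = r$, i.e. $F + H_0 = F_1 + \cdots + F_r$ with each $F_\ell$ of partition rank one, say $F_\ell(x) = P_\ell(x_{I_\ell}) Q_\ell(x_{I^c_\ell})$ for a proper nonempty $I_\ell \subset I$ (we may split $I$ rather than $[d]$ since $F$ is supported on $V^I$). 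Fix a coordinate $i_0 \in I$; for each $\ell$ exactly one of $I_\ell$, $I^c_\ell$ contains $i_0$, say $i_0 \in I^c_\ell$ after relabeling, so $Q_\ell$ involves the $i_0$-variable and $P_\ell$ does not.

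The key computation: restricting the $i_0$-slot, $F(x_{I\setminus\{i_0\}}, \cdot) = -H_0(x_{I\setminus\{i_0\}},\cdot) + \sum_\ell P_\ell(x_{I_\ell}) Q_\ell(x_{I^c_\ell\setminus\{i_0\}}, \cdot)$ as a linear functional in the $i_0$-variable. Now restrict to the closed set $W \subset Z(\ul G)$ cut out by the $r$ scalar conditions $P_\ell(x_{I_\ell}) = 0$ for all $\ell$; this has codimension at most $r$ inside $Z(\ul G)$. On $W$, since $H_0 \in (\ul G)$ we have $H_0(x,\cdot) \equiv 0$ as a functional of the $i_0$-variable (its coefficients lie in $(\ul G)$, hence vanish on $Z(\ul G) \supseteq W$), and each remaining term $P_\ell(x_{I_\ell}) Q_\ell(\cdots)$ vanishes because $P_\ell(x_{I_\ell}) = 0$. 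Hence $F(x_{I\setminus\{i_0\}}, \cdot) \equiv 0$ on $W$, so in particular the functionals $F(x_{I\setminus\{i_0\}},\cdot)$ and $\ul G^{i_0}(x_{[d]\setminus\{i_0\}},\cdot)$ are (trivially) linearly dependent at every point of $W$. Therefore the defining set for $\grk_{\ul G}(F)$ with this choice of $i_0$ contains $W$, giving $\grk_{\ul G}(F) \le \codim_{Z(\ul G)} W \le r = \overline{\prk}_{\ul G}(F)$, which is the claim.

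I expect the only real subtlety to be bookkeeping: making sure that when $H_0 \in (\ul G)$, its polarization-coefficients in the $i_0$-variable genuinely lie in the ideal $(\ul G)$ so that they vanish on $Z(\ul G)$ — this is immediate from multilinearity (a multilinear form in $(\ul G)$ can be written $\sum_k G_k \cdot R_k$ with $R_k$ multilinear on the complementary variables, and freezing all but the $i_0$-slot keeps each coefficient in $(\ul G)$), but it should be stated carefully. A second minor point is the relabeling that puts $i_0$ in $I^c_\ell$ for every $\ell$; this is harmless since partition rank one is symmetric in the two blocks. Everything else is a dimension count, and since $\grk_{\ul G}$ is defined as a minimum over $i_0 \in I$, exhibiting the bound for one choice of $i_0$ suffices. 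No appeal to any deep input is needed beyond the definitions.
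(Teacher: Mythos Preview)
Your approach matches the paper's one-line argument, but the step you flagged as ``the only real subtlety'' is in fact wrong as stated. You claim that for $x\in Z(\ul G)$ the functional $H_0(x_{I\setminus\{i_0\}},\cdot)$ is identically zero because ``its coefficients lie in $(\ul G)$''. This fails precisely when some $G_k$ in the decomposition $H_0=\sum_k G_k R_k$ involves the coordinate $i_0$. In that case, freezing all but the $i_0$-slot gives $R_k(x_{I\setminus J_k})\cdot G_k(x_{J_k\setminus\{i_0\}},\cdot)$, and the coefficient $R_k(x)$ has no reason to lie in $(\ul G)$ or to vanish on $Z(\ul G)$. (Concretely: take $\ul G=\{G\}$ a single bilinear form on $V^{\{1,2\}}$, $H_0=G$, $i_0=2$; then $H_0(x_1,\cdot)=G(x_1,\cdot)$ need not be zero at points of $Z(G)$.)

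The correct conclusion, and this is exactly what the paper records, is that on $Z(\ul G)$ one has
\[
H_0(x_{I\setminus\{i_0\}},\cdot)\ \in\ \sp\bigl(\ul G^{i_0}(x_{[d]\setminus\{i_0\}},\cdot)\bigr),
\]
because the terms with $i_0\notin J_k$ vanish (their coefficient is $G_k(x)=0$ on $Z(\ul G)$) while the terms with $i_0\in J_k$ are scalar multiples of $G_k(x,\cdot)$. Hence on your set $W$ one gets $F(x,\cdot)\in\sp(\ul G^{i_0}(x,\cdot))$, which is the linear dependence required by the definition of $\grk_{\ul G}$ --- but not ``trivially'' as you wrote. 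With this correction the rest of your argument goes through unchanged.
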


\begin{proof}
    If $F = \sum_{i=1}^r P_iQ_i \mod \ul{G}$ then $F(x,\cdot) \in \sp(G(x,\cdot))$ whenever $P_i(x) = 0$.
\end{proof}

The main result we will use is:

\begin{proposition}\cite[Theorem 3.5]{L-small}\label{prop:str-reg}
    Given $A,B$, there exist $C,D$ such that if $\cF$ is a multi-linear tower which is absolutely $(C,D,r)$-strong (in the sense of partition rank) then it is $(A,B,r)$-regular (in the sense of geometric rank).
\end{proposition}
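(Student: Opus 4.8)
The plan is to argue by contraposition and reduce everything to a statement about a single multi-linear form, then to use as a black box the (non-relative) equivalence of partition rank and geometric rank over $\ol{K}$. First I would unwind the definitions. Suppose the multi-linear tower $\cF$ fails to be $(A,B,r)$-regular in the sense of geometric rank: some layer $i$ carries a nontrivial combination $G = \sum_j a_j f_{i,j}$ with $\grk_{\cF_{<i}}(G) \le A(s_i+\cdots+s_h+r)^B =: k$. Because $\cF$ is absolutely $(C,D,r)$-strong, the sub-tower $\cF_{<i}$ is automatically absolutely $(C,D,\rho)$-strong in partition rank with $\rho := s_i+\cdots+s_h+r$, and $k$ is polynomial in $\rho$. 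Since $\ol{\prk}_{\cF_{<i}}(\cF_i) \le \ol{\prk}_{\cF_{<i}}(G)$, it suffices to find $C'',D''$ depending only on $d$ with the following property: whenever $\ul{G}$ is an absolutely $(C,D,\rho)$-strong multi-linear tower over $\ol{K}$ of degree $\le d$ and $G$ is a multi-linear form with $\grk_{\ul{G}}(G) \le k$, one has $\ol{\prk}_{\ul{G}}(G) \le C''(k+\rho)^{D''}$. The essential point is that this target bound may depend only on $k$ and $\rho$, and not on the total size of $\ul{G}$; granting it, one recovers suitable $C,D$ by substituting $\rho = s_i+\cdots+s_h+r$ and $k = A\rho^B$.

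For this single-form relative statement, the base case $\ul{G} = \emptyset$ is the known fact that for multi-linear forms of bounded degree over an algebraically closed field of characteristic zero, partition rank is bounded by a fixed polynomial (of degree depending only on $d$) in geometric rank, following from the established equivalence of partition, analytic, and geometric rank over such fields. For the relative case I would pass to the coordinate ring $R = \ol{K}[V^{[d]}]/(\ul{G})$. Since $\ul{G}$ is absolutely strong, $Z(\ul{G})$ is an irreducible complete intersection whose singular locus has codimension far exceeding $k$ (by the multi-linear analogues of lemmas \ref{lem:str-reg} and \ref{lem:str-prime}, or, within the induction, by those results for shorter towers), so $R$ is a normal domain. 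The hypothesis $\grk_{\ul{G}}(G) \le k$ furnishes a slot $i_0$ and a subvariety $W \subseteq Z(\ul{G})$ of codimension $\le k$ on which the $i_0$-slot gradient of $G$ lies in the span of those of the generators of $(\ul{G})$ involving $i_0$. The plan is to promote this into a genuine identity $G \equiv (\textnormal{low partition rank form}) \bmod (\ul{G})$ by running the non-relative partition-rank-versus-geometric-rank argument at the generic point of $Z(\ul{G})$ and clearing denominators, keeping the auxiliary loci that appear inside the regular locus of $Z(\ul{G})$ via the surplus strength of $\ul{G}$; an extraction/restriction step in the spirit of lemma \ref{lem:Taylor-strong} aligns the coordinate-block types so that the correction term is a multi-linear element of $(\ul{G})$ of the same type as $G$. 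An alternative framing is induction on the number of layers of $\ul{G}$: peel off the top layer $\cH$, produce a multi-linear $H \in (\cH)$ with $\grk_{\ul{G}'}(G-H)$ controlled (where $\ul{G}'$ is $\ul{G}$ with $\cH$ deleted), and invoke the inductive hypothesis for $\ul{G}'$, which is shorter and even stronger.

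The step I expect to be the main obstacle is precisely the demand that the final bound not depend on the total size of $\ul{G}$. In the layer-by-layer framing this manifests as a genuine defect: deleting the top layer $\cH$ replaces the error parameter $\rho$ by $|\cH| + \rho$, so once all layers of $\ul{G}$ have been removed one has picked up a dependence on $|\ul{G}| = s_1 + \cdots + s_{i-1}$, which is not allowed, since the codimension bound for layer $i$ is forbidden from seeing the forms below it. Overcoming this is exactly what the strength hypothesis is built to do: requiring each layer to have partition rank exceeding a polynomial (of degree depending on $d$) in the number of forms above it plus $r$ provides the surplus needed to absorb the layer sizes in a single pass rather than recursively, and making this absorption quantitative and uniform is the technical crux, where the bulk of the work in \cite[Theorem~3.5]{L-small} lies. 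A secondary point is that the non-relative base case must be invoked with an explicitly polynomial --- not merely finite --- dependence of partition rank on geometric rank, as any weaker dependence there would destroy the uniformity of $C$ and $D$ in $s$ and $r$.
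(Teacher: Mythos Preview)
The paper does not supply its own proof of this proposition: it is quoted verbatim from \cite[Theorem~3.5]{L-small} and used as a black box. There is therefore nothing in the present paper to compare your proposal against.

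As for the proposal itself, it is a plan rather than a proof. Your contrapositive set-up and the reduction to a single multi-linear form $G$ with small $\grk_{\cF_{<i}}(G)$ are fine, and you correctly isolate the real issue: the eventual bound on $\ol{\prk}_{\cF_{<i}}(G)$ must be polynomial in $s_i+\cdots+s_h+r$ alone, with no dependence on $|\cF_{<i}| = s_1+\cdots+s_{i-1}$. But you do not actually carry out that step. Your ``generic point / clear denominators'' sketch is vague---passing to the function field of $Z(\cF_{<i})$ and bounding partition rank there would still require controlling how many forms from $\cF_{<i}$ are needed to absorb the denominators, which is exactly the size-independence problem restated. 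Your alternative layer-peeling argument you yourself diagnose as defective for the same reason. In both cases you then say that the strength hypothesis is ``built to'' absorb this and defer the work to \cite{L-small}. That is where the content lies, and your write-up does not supply it.

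If your goal is to understand the mechanism, the key input you are missing is a \emph{fiberwise} argument: rather than working modulo all of $\cF_{<i}$ at once, one controls the codimension of the bad locus inside $Z(\cF_{<i})$ directly via the geometric-rank condition, and the strength of $\cF_{<i}$ enters only to guarantee that $Z(\cF_{<i})$ is geometrically well-behaved (irreducible complete intersection, small singular locus). This is closer in spirit to how lemma~\ref{lem:fixed-reg} is proved later in the paper. But making this precise and polynomial is the substance of the cited theorem, not something that follows from the outline you have given.
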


A key role will also be played by the regularity of towers obtained by fixing subsets of the coordinates.

\begin{definition}
    Suppose  $\cF$ is a multi-linear tower on $V^{[d]}$ and $I\subset [d]$. We denote by $\cF^I$ the multi-linear tower on $V^I$ consisting of layers which only involve coordinates from $I$. Given $x\in Z(\cF^I)$, write $\cF(x)$ for the multi-linear tower on $V^{[d]\setminus I}$ obtained by fixing the $I$ coordinates to equal $x$ in layers which involve some coordinate outside $I$. In other words, these are the equations for the fiber of the projection map $Z(\cF) \to Z(\cF^I)$ over $x$.  
\end{definition}

\begin{lemma}\label{lem:fixed-reg}
    Given $A,B$, there exist $C,D$ such that if $\cF$ is absolutely $(C,D,r)$-strong then for $x\in Z(\cF^I)$ outside a set of codimension $\ge r$, the tower $\cF(x)$ is absolutely $(A,B,r)$-strong. 
\end{lemma}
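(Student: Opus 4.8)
The plan is an induction on the height $h$ of $\cF$, peeling off the top layer $\cF_h$. The device that makes the codimension bookkeeping work is that passing from $\cF$ to $\cF_{<h}$ raises the error parameter from $r$ to $r+s_h$ — directly from the definition, absolute $(C,D,r)$-strength of $\cF$ gives absolute $(C,D,r+s_h)$-strength of $\cF_{<h}$ — and this is exactly enough to compensate for the fact that $Z(\cF^I)$ sits inside $Z(\cF^I_{<h})$ (where $\cF^I_{<h}:=(\cF_{<h})^I$) in codimension $s_h$. One fixes $C,D$ large enough that Lemma~\ref{lem:str-prime} applies, that Proposition~\ref{prop:str-reg} turns absolute $(C,D,\cdot)$-strength into absolute $(A+1,B,\cdot)$-regularity in the sense of geometric rank, and that $(C,D)$ dominates the constant produced by the inductive hypothesis for target $(A,B)$; since the hypothesis is always invoked with this same target, this can be done uniformly in $h$. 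For $h=0$ the tower $\cF(x)$ is empty and there is nothing to prove. Throughout, I use that, by Claim~\ref{cl:rk-sing} applied layerwise, geometric $(A,B,r)$-regularity of a multilinear tower implies absolute $(A,B,r)$-strength.

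\emph{Case $I_h\subseteq I$.} Here $\cF(x)=(\cF_{<h})(x)$ for every $x\in Z(\cF^I)$, and $\cF^I=(\cF^I_{<h},\cF_h)$ is itself $(C,D,r)$-strong, so by Lemma~\ref{lem:str-prime} both $Z(\cF^I)$ and $Z(\cF^I_{<h})$ are irreducible complete intersections with $\codim_{Z(\cF^I_{<h})}Z(\cF^I)=s_h$. Apply the inductive hypothesis to $\cF_{<h}$ with parameter $r+s_h$: there is a closed $\mathrm{Bad}\subseteq Z(\cF^I_{<h})$ of codimension $\ge r+s_h$ off which $(\cF_{<h})(x)$ is absolutely $(A,B,r+s_h)$-strong. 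Since $\mathrm{Bad}\cap Z(\cF^I)\subseteq\mathrm{Bad}$, it has dimension $\le\dim\mathrm{Bad}\le\dim Z(\cF^I_{<h})-r-s_h=\dim Z(\cF^I)-r$; thus $\mathrm{Bad}\cap Z(\cF^I)$ is an exceptional set of codimension $\ge r$ in $Z(\cF^I)$ off which $\cF(x)=(\cF_{<h})(x)$ is absolutely $(A,B,r)$-strong.

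\emph{Case $I_h\not\subseteq I$.} Now $\cF^I=\cF^I_{<h}$, so $Z(\cF^I)=Z(\cF^I_{<h})$; by Lemma~\ref{lem:str-prime} both this and $Z(\cF_{<h})$ are irreducible, and the projection $q\colon Z(\cF_{<h})\to Z(\cF^I)$, $z\mapsto z_I$, is surjective (it hits every $x$ via $(x,0)$) with fibre over $x$ equal to $Z((\cF_{<h})(x))$. For $i_0\in I_h\setminus I$ let $T_{i_0}\subseteq Z(\cF_{<h})$ be the locus where $\cF_h(z_{I_h\setminus\{i_0\}},\cdot)$ together with $\cF_{<h}^{i_0}(z_{[d]\setminus\{i_0\}},\cdot)$ are linearly dependent; a direct computation of partial derivatives identifies $q^{-1}(x)\cap T_{i_0}$ with the locus appearing in $\grk_{(\cF_{<h})(x)}(\cF_h(x))$ at the index $i_0$. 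By the choice of $C,D$, $\cF$ is absolutely $(A+1,B,r)$-regular, so $\codim_{Z(\cF_{<h})}T_{i_0}>(A+1)(s_h+r)^B$. Restrict now to $x$ outside the union of: the exceptional set from the inductive hypothesis for $\cF_{<h}$ with parameter $r+s_h$ (codimension $\ge r+s_h\ge r$), and, for each $i_0\in I_h\setminus I$, the closed set of codimension $\ge r$ provided by the standard fibre-dimension inequality — for any morphism $g\colon W\to B$ with $B$ irreducible, $\{b:\dim g^{-1}(b)\ge\dim W-\dim B+r\}$ lies in a closed set of codimension $\ge r$ — applied to $q|_{T_{i_0}}$. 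For such $x$: first, $(\cF_{<h})(x)$ is absolutely $(A,B,r+s_h)$-strong; second, combining $\dim Z((\cF_{<h})(x))\ge\dim Z(\cF_{<h})-\dim Z(\cF^I)$ (irreducibility of $Z(\cF_{<h})$) with the bound on $\dim\big(q^{-1}(x)\cap T_{i_0}\big)$ yields $\codim_{Z((\cF_{<h})(x))}\big(q^{-1}(x)\cap T_{i_0}\big)\ge\codim_{Z(\cF_{<h})}T_{i_0}-r+1>A(s_h+r)^B$ for every $i_0$, hence $\grk_{(\cF_{<h})(x)}(\cF_h(x))>A(s_h+r)^B$, hence $\overline{\prk}_{(\cF_{<h})(x)}(\cF_h(x))>A(s_h+r)^B$ by Claim~\ref{cl:rk-sing}. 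By Claim~\ref{str-shuffle}(2), applied to the tower $(\cF_{<h})(x)$ and the layer $\cF_h(x)$, $\cF(x)$ is absolutely $(A,B,r)$-strong.

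\textbf{Main obstacle.} The substantive difficulty is exactly the codimension bookkeeping: restricting an exceptional set from $Z(\cF^I_{<h})$ down to $Z(\cF^I)$, or passing from a total space to a fibre, can each destroy a codimension estimate. Both are tamed by the same mechanism — carrying the inflated parameter $r+s_h$ down the induction, and invoking the fibre-dimension inequality with shift $r$ — and this is precisely why the lemma has to be stated for a general parameter rather than just $r=1$. A secondary (routine) point is that the constants $C,D$ generated by the recursion can be chosen independently of $h$, since the inductive hypothesis is invoked at every level with the fixed target $(A,B)$. (If the paper's definition of geometric rank of a layer is taken as a minimum over linear combinations rather than ``the whole collection is dependent'', one replaces $T_{i_0}$ by the universal family over $\mathbb{P}^{s_h-1}\times Z(\cF^I)$ and argues identically, at the cost of an extra additive $s_h$ in the codimension, absorbed by the $(A+1)$ versus $A$.)
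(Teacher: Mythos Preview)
Your proof is correct and follows essentially the same route as the paper's: induction on $h$ with the same two-case split according to whether $I_h\subseteq I$, the same observation that $\cF_{<h}$ is absolutely $(C,D,r+s_h)$-strong, and in the second case the same passage through Proposition~\ref{prop:str-reg} combined with a fibre-dimension argument to transfer the global codimension bound on the singular locus down to the fibres. Your write-up is a bit more explicit in places---spelling out the fibre-dimension inequality, invoking Claim~\ref{cl:rk-sing} for the direction $\grk\Rightarrow\overline{\prk}$, and using Claim~\ref{str-shuffle}(2) to reassemble the layers---but the underlying argument is the same.
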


\begin{proof}
    This is proved by induction on $h,$ where the base case $h = 0$ is trivial. Suppose the statement holds for height $<h$ and let $\cF$ be a multi-linear tower of height $h$ on $V^{[d]}$. Note that $\cF_{<h}$ is absolutely $(C,D,r+s_h)$-strong whenever $\cF$ is $(C,D,r)$-strong.

    There are two cases to handle. The first case is when $I_h \subset I$. In this case $\cF(x) = \cF_{<h}(x)$ for each $x$  and by inductive hypothesis the tower $\cF(x)$ is $(A,B,r)$-regular for $x\in Z(\cF_{<h}^I)$ outside a bad set $E$ of codimension $r+s_h$. Thus,
    \[
    \codim_{Z(\cF^I)} E\cap Z(\cF^I) \ge \dim Z(\cF^I) - \dim E  = \dim Z(\cF_{<h}^I) - s_h - \dim E \ge r,
    \] 
    where we used the fact that $Z(\cF_I)$ is a complete intersection by lemma \ref{lem:str-prime}. 

    The second case is when $I_h \not\subset I$. In this case, $Z(\cF_{<h}^I) = Z(\cF^I)$. By inductive hypothesis, $\cF_{<h}(x)$ is absolutely $(A,B,r+s_h)$-strong for $x\in Z(\cF^I)$ outside a set of codimension $r$, so we need to show that $\ol{\str}_{\cF_{<h}(x)} (\cF_h(x)) > A(s_h+r)^B$ for $x\in Z(\cF^I)$ outside a set of codimension $r$.  Suppose, without loss of generality, that $d\in I_h\setminus I$ and let  
    \[
    S = (x\in Z(\cF_{<h}): \cF(x_{[d-1]},\cdot) \textnormal{ are linearly dependent}).
    \]
    
    For any fixed $\eta$, the set
    $E_\eta = (x\in Z(\cF_I): \codim_{Z(\cF_{<h})(x)} S(x) \le \dim Z(\cF_I) + \eta)$ satisfies
    \begin{equation}\label{eq:fiber-str}
        \alpha(s+r)^\beta < \codim_{Z(\cF_{<h})} S \le \codim_{Z(\cF_I)} E_\eta +\eta,
    \end{equation}
    where the inequality on the left follows from proposition \ref{prop:str-reg} and the one on the right is a general property for dimension of fibers.
    By claim \ref{cl:rk-sing} the set of $x\in Z(\cF_I)$ with $ \ol{\str}_{\cF_{<h}(x)} (\cF_h(x)) \le \eta$ is contained in $E_\eta$ so it's enough to show that $E_{A(s_h+r)^B+1}$ has codimension $\ge r$. By inequality \eqref{eq:fiber-str} this holds for appropriately chosen $\alpha,\beta$, and hence for the corresponding choice of $C,D$. 
\end{proof}

We often rely on a stronger version of lemma \ref{lem:fixed-reg}, namely that $\cF\cup \cF(\ul{y})$ is regular for generic choices of $\ul{y}\in Z(\cF_I)^m$. To prove this, we first introduce two "cloning" operations for multi-linear equations. Given a multi-linear form $F:V^{[d]}\to K$, we can set $V_{d+1}=V_d$ and define forms $F_1,F_2:V^{[d+1]}\to K$ by $F_1(x_1,\ldots,x_{d+1}) = F(x_1,\ldots,x_d)$ and $F_2(x_1,\ldots,x_{d+1}) = F(x_1,\ldots,x_{d-1},x_{d+1})$. These are multi-linear in the $[d]$ coordinates and $[d-1]\cup\{d+1\}$ coordinates, respectively, which is an example of \emph{external} cloning. Alternatively, we can set $U_i = V_i$ for $i<d$ and $U_d = V_d^2$ and then define $F_1,F_2:U^{[d]}\to K$ by $F_i(x_1,\ldots,x_d) = F(x_1,\ldots,x_d(i))$. These are both multi-linear in the $[d]$ coordinates, which is an example of \emph{internal} cloning. We now give the general definition. 

\begin{definition}
    Given a multi-linear form $F:V^{[d]}\to K$ and a subset $I\subset [d]$, we can produce $m$ multi-linear forms  $F_i:V^{[d]\setminus I}\times \prod_{i=1}^m V^I \to K$ by $F_i(x,y) = F(x,y(i))$. When the $F_i$ are considered as multi-linear forms on different subsets of $[d+(m-1)|I|]$, we call this operation \emph{external cloning}  of degree $m$ over $I$. When they are all considered as multi-linear forms on $[d]$, we call this \emph{internal cloning} of degree $m$ over $I$. These operations naturally extend to multi-linear towers. External cloning produces a tower with at most $m$ times as many layers and internal cloning produces a tower with the same number of layers. 
\end{definition}

These cloning operations preserve regularity.

\begin{lemma}\label{lem:clone-str}
Let $\cF$ be a multi-linear tower on $V^{[d]}$ and let $\cF', \cF''$ be obtained by external, internal cloning of degree $m$ over $I$, respectively. 
\begin{enumerate}
    \item If $\cF$ is $(A,B,r)$-strong then so is $\cF'$.
    \item If $\cF$ is $(Am^B,B,r)$-strong then $\cF''$ is $(A,B,r)$-strong.
\end{enumerate}    
\end{lemma}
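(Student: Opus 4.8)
The plan is to reduce both parts to one assertion: \emph{cloning never decreases the relative strength of a layer}. Fix $\ell$, let $\cF_\ell$ be the $\ell$-th layer of $\cF$, let $\cG$ denote the clone-layer(s) it produces in $\cF'$ (resp.\ in $\cF''$), and let $\cH$ denote the tower obtained by performing the same cloning to $\cF_{<\ell}$. I claim $\str_\cH(\cG) \ge \str_{\cF_{<\ell}}(\cF_\ell)$ — for external cloning this is to be read for each of the $\le m$ individual clone-layers, for internal cloning for the single fattened layer. Granting this, the two statements differ only in bookkeeping. External cloning multiplies the number of layers by at most $m$ but leaves the number of forms in each individual layer unchanged; ordering the clone-layers appropriately (the $\le m$ clones of $\cF_\ell$ placed in the block where $\cF_\ell$ sat), a short comparison of layer sizes — the key point being that no single layer grows — shows that $(A,B,r)$-strength of $\cF$ passes to $\cF'$. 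Internal cloning keeps the same $h$ layers but replaces $\cF_\ell$ by a layer with at most $m s_\ell$ forms, so each tail $s_\ell + \dots + s_h$ grows by a factor of at most $m$; a routine estimate of the form $(x+y)^B \le 2^B \max(x,y)^B$ together with the hypothesis that $\cF$ is $(Am^B, B, r)$-strong absorbs exactly this factor and yields $(A,B,r)$-strength of $\cF''$.

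The inequality $\str_\cH(\cG) \ge \str_{\cF_{<\ell}}(\cF_\ell)$ is proved by specialization. A nontrivial linear combination $g$ of the clone-layer $\cG$ is, for internal cloning, of the form $\sum_{i=1}^m g_i$ where $g_i$ is a linear combination of $\cF_\ell$ evaluated on the $i$-th copy of the $I$-coordinates; since $g$ is nontrivial and $I \cap I_\ell \ne \emptyset$ (the only case in which cloning does anything), some $g_{i_0}$ has nonzero coefficient vector. For external cloning $g$ is already a combination of $\cF_\ell$ sitting on a single copy, so we take $i_0$ to be that copy. Suppose $\str_\cH(g) = t$, say $g + H = \sum_{k=1}^t P_k Q_k$ with $H \in (\cH)$ and each $P_k Q_k$ of partition rank $\le 1$. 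Specialize to zero every coordinate belonging to the $i$-th copy of $I$ for $i \ne i_0$. On the left, each $g_i$ with $i \ne i_0$ vanishes (it is multilinear in at least one coordinate of a foreign copy), while $g_{i_0}$ is untouched; every generator of $\cH$ that uses a foreign-copy coordinate vanishes, so $H$ lands in the ideal generated by the surviving generators, which after renaming are precisely a copy of $\cF_{<\ell}$. On the right, restricting a partition-rank-$\le 1$ form to a coordinate subspace leaves partition rank $\le 1$, so the right side still has partition rank $\le t$. Hence $\str_{\cF_{<\ell}}(g_{i_0}) \le t$, and since $g_{i_0}$ is a nontrivial combination of $\cF_\ell$ we conclude $\str_{\cF_{<\ell}}(\cF_\ell) \le t = \str_\cH(g)$; minimizing over $g$ gives the claim.

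The main obstacle is making the specialization step watertight: (i) after killing the foreign copies of $I$, the residual generators must genuinely form an isomorphic copy of $\cF_{<\ell}$, which uses that the cloning is performed uniformly over $I$ in every layer and that distinct copies use disjoint variable sets; (ii) one must be able to pick $i_0$ with $g_{i_0}$ nontrivial, i.e.\ the nontriviality of $g$ cannot be "spread out" so thinly that no single copy carries it — which is immediate from the coefficient bookkeeping but worth stating; and (iii) the elementary but essential fact that partition rank does not increase under restriction to a coordinate subspace (clear, since each partition-rank-one summand restricts to a partition-rank-$\le 1$ summand with the same bipartition of the surviving coordinates). The layer-ordering bookkeeping underlying part (1) is the remaining point requiring care, but it is purely combinatorial.
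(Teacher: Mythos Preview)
Your proposal is correct and follows essentially the same approach as the paper: both establish the key inequality $\str_{\cH}(\cG) \ge \str_{\cF_{<\ell}}(\cF_\ell)$ by setting every foreign copy of the $I$-coordinates to zero and reading off a relative-rank decomposition in the original tower, and the paper treats part~(2) as ``almost identical'' to part~(1), exactly as you do. Your writeup is somewhat more explicit about why specialization behaves correctly on each ingredient (the choice of $i_0$, the ideal term $H$, and the partition-rank-one summands), but the underlying idea is identical.
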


\begin{proof}
    If $\cF = (\cF_i)_{i\in [h]}$ then $\cF' = (\cF_{i,j})_{i\in [h], j\in [r]}$ with the layers arranged in lexicographic order. The first claim follows from $\str_{\cF'_{<(i,j)}} (\cF'_{i,j}) \ge \str_{\cF_{<i}} (\cF_i)$ for all $i,j$. Indeed, suppose that
    \[
    t = \str_{\cF'_{<(i,j)}} \left( \sum_{k=1}^{s_i} a_k F_{i,k}(x,y_j) \right)
    \]
    for a nontrivial linear combination of the forms in $\cF'_{i,j}$. Plugging in $y_j = y$ and $y_\ell = 0 $ for all $\ell \neq j$ yields 
    \[
    \str_{\cF_{<i}} (\cF_i) \le \str_{\cF_{<i}} \left( \sum_{k=1}^{s_i} a_k F_{i,k}(x,y) \right) \le t \le \str_{\cF'_{<(i,j)}} (\cF'_{i,j}). 
    \]

    The proof of the second claim is almost identical.
\end{proof}

We can finally prove:

\begin{lemma}\label{lem:reg-der}
    Given $A,B$ there exist $C,D$ such that if $\cF$ is a multi-linear tower which is absolutely $(C,D,r+m)$-strong then for $\ul{y}\in Z(\cF_I)^m$ outside a subset of codimension $r$, the tower $\cF\cup\cF(\ul{y})$ is absolutely $(A,B,r)$-strong.
\end{lemma}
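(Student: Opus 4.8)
The plan is to deduce this from Lemma \ref{lem:fixed-reg} via the cloning operations of Lemma \ref{lem:clone-str}, using the observation that the tower $\cF \cup \cF(\ul{y})$ is (a piece of) the fiber of an externally cloned tower over the point $\ul y$. Concretely, consider the external clone $\cF^{(m+1)}$ of $\cF$ of degree $m+1$ over the index set $I$; this replaces the $I$-coordinates by $m+1$ independent copies $V^I_0, V^I_1, \ldots, V^I_m$, and by Lemma \ref{lem:clone-str}(1) it is still absolutely $(C,D,r+m)$-strong whenever $\cF$ is. Now apply Lemma \ref{lem:fixed-reg} to $\cF^{(m+1)}$ with the subset of coordinates being the $m$ copies $V^I_1,\ldots,V^I_m$ (i.e.\ all the cloned $I$-blocks except the $0$-th): after enlarging $C,D$ suitably, for $\ul y = (y_1,\ldots,y_m)$ in $Z((\cF^{(m+1)})^{I_1\cup\cdots\cup I_m})$ outside a set of codimension $\ge r+m$ — wait, I want codimension $r$, so I should run Lemma \ref{lem:fixed-reg} at parameter $r$ after first absorbing the $m$ extra points, which is exactly why the hypothesis is $(C,D,r+m)$-strong rather than $(C,D,r)$-strong. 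The fiber tower $\cF^{(m+1)}(\ul y)$ is precisely the tower on $V^{[d]}$ (with the single remaining copy $V^I_0$ of the $I$-coordinates) whose equations are: the original forms of $\cF$ that do not involve $I$, together with, for each layer of $\cF$ that does involve $I$, the form itself (using the $0$-th copy) \emph{and} its $m$ specializations $F(\cdot, y_j)$. That is exactly $\cF \cup \cF(\ul y)$ up to relabelling, and $Z((\cF^{(m+1)})^{I_1\cup\cdots\cup I_m}) = Z(\cF_I)^m$ since fixing only the non-$I$ copies gives $m$ independent copies of the $I$-block equations. So Lemma \ref{lem:fixed-reg} yields that $\cF \cup \cF(\ul y)$ is absolutely $(A,B,r)$-strong for $\ul y \in Z(\cF_I)^m$ outside a set of codimension $\ge r$, as desired.

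The one technical point to be careful about is the bookkeeping of the "$r+m$" versus "$r$": Lemma \ref{lem:fixed-reg} as stated says that $(C,D,r)$-strength of the ambient tower gives, for $x$ outside codimension $\ge r$, that the fiber tower is $(A,B,r)$-strong. Here the set of coordinates being fixed has $m$ blocks and the fiber lives over $Z(\cF_I)^m$, whose dimension is $m$ times that of $Z(\cF_I)$; there is no loss there since $Z(\cF_I)$ is a complete intersection by Lemma \ref{lem:str-prime}, so codimensions add cleanly. The reason the hypothesis is inflated to $r+m$ is more subtle: the cloned tower $\cF^{(m+1)}$, while $(C,D,r+m)$-strong, has its layers involving the cloned blocks, and when we invoke Lemma \ref{lem:fixed-reg} we want the \emph{output} strength parameter to be the full $r$ for the genuine layers $\cF_h$ — but the internal proof of Lemma \ref{lem:fixed-reg} peels off $s_h$ at each layer, and here there is no new inflation beyond what Lemma \ref{lem:fixed-reg} already absorbs; the $+m$ is there simply because the dimension count for the bad set is being done inside $Z(\cF_I)^m$ and one needs the ambient strength to exceed the target by the relevant slack. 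In the write-up I would state a clean version: apply Lemma \ref{lem:fixed-reg} to $\cF^{(m+1)}$ at parameter $r$ (not $r+m$), noting that $(C,D,r)$-strength of $\cF^{(m+1)}$ follows from $(C,D,r+m)$-strength of $\cF$ together with Lemma \ref{lem:clone-str}(1) and the trivial monotonicity in the error parameter — actually $(C,D,r+m)$-strong implies $(C,D,r)$-strong directly, so even more simply one just applies Lemma \ref{lem:fixed-reg} at parameter $r$ and the $+m$ in the hypothesis is the slack that makes the intermediate codimension estimates go through.

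I expect the main obstacle to be not any deep mathematics but getting the combinatorial identification of $\cF^{(m+1)}(\ul y)$ with $\cF \cup \cF(\ul y)$ exactly right — in particular tracking which layers "involve $I$" and hence get cloned, versus which do not and appear unchanged in both the clone and the fiber, and verifying that the ordering of layers in the clone-then-fiber tower can be rearranged to match the ordering in $\cF \cup \cF(\ul y)$ without changing relative strength (this is just the observation that relative strength of a layer only depends on the ideal generated by the layers below it, a fact already used implicitly in Claim \ref{str-shuffle}). Once that identification is in place, the lemma is immediate from Lemmas \ref{lem:clone-str} and \ref{lem:fixed-reg}.

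\begin{proof}
    Fix $A,B$ and let $C,D$ be large enough that Lemma \ref{lem:fixed-reg} applies with these constants (we will enlarge $C,D$ further if needed). Let $\cF$ be absolutely $(C,D,r+m)$-strong. Form the external clone $\cG := \cF^{(m+1)}$ of degree $m+1$ over $I$; write the $m+1$ cloned copies of the $I$-coordinates as $V^I_{(0)}, V^I_{(1)}, \ldots, V^I_{(m)}$. By Lemma \ref{lem:clone-str}(1), $\cG$ is absolutely $(C,D,r+m)$-strong, and in particular absolutely $(C,D,r)$-strong.

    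Let $J = I_{(1)} \cup \cdots \cup I_{(m)}$ be the union of the last $m$ cloned blocks of coordinates. The subtower $\cG^J$ consists exactly of $m$ independent copies of $\cF_I$, one in each block $V^I_{(j)}$, so $Z(\cG^J) = Z(\cF_I)^m$, and since $Z(\cF_I)$ is a complete intersection by Lemma \ref{lem:str-prime}, so is $Z(\cG^J)$, with $\dim Z(\cG^J) = m \dim Z(\cF_I)$. For $\ul y = (y_1,\ldots,y_m) \in Z(\cF_I)^m = Z(\cG^J)$, the fiber tower $\cG(\ul y)$ lives on the remaining coordinates $V^{[d]}$ (with $I$-coordinates in the $0$-th copy $V^I_{(0)}$), and it is obtained as follows: every layer of $\cF$ not involving $I$ contributes the same layer unchanged; every layer $\cF_i$ involving $I$ contributes the layer itself together with the $m$ specializations $\cF_i(\cdot, y_j)$ for $j \in [m]$. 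Rearranging the layers (which does not affect any relative strength, since the relative strength of a layer depends only on the ideal generated by the layers below it), this tower is precisely $\cF \cup \cF(\ul y)$.

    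Now apply Lemma \ref{lem:fixed-reg} to $\cG$ with the distinguished subset of coordinates equal to $J$: after enlarging $C,D$ if necessary, for $\ul y \in Z(\cG^J) = Z(\cF_I)^m$ outside a subset of codimension $\ge r$, the tower $\cG(\ul y) = \cF \cup \cF(\ul y)$ is absolutely $(A,B,r)$-strong. This is exactly the assertion of the lemma.
\end{proof}
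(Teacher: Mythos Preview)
Your proposal is correct and follows essentially the same strategy as the paper's own proof: construct a cloned tower whose fiber over $\ul y$ is exactly $\cF\cup\cF(\ul y)$, verify via Lemma~\ref{lem:clone-str} that the clone inherits strength, and then invoke Lemma~\ref{lem:fixed-reg}. The only difference is cosmetic: the paper first externally doubles over $I$ and then internally $m$-clones the second copy, whereas you perform a single external $(m+1)$-clone; the two constructions produce the same set of forms on the same ambient space (merely grouped into layers differently) and the identification of the fiber with $\cF\cup\cF(\ul y)$ and of $Z(\cG^J)$ with $Z(\cF_I)^m$ goes through identically.
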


\begin{proof}
    Let $\cF'$ be obtained from $\cF$ by externally doubling the $I$ coordinates. Then construct $\cF''$ from $\cF'$ by internally cloning the second copy of the $I$ coordinates $m$ times. Note that fixing those $m$ copies to equal $\ul{y} = (y_1,\ldots,y_m)$ gives us $\cF''(\ul{y}) = \cF\cup\cF(\ul{y})$. By lemma \ref{lem:clone-str}, the tower $\cF''$ is absolutely $(A,B,r)$-strong as soon as $\cF$ is absolutely $(C,D,r+m)$-strong. Now apply lemma \ref{lem:fixed-reg} to complete the proof.  
\end{proof}

\section{Proof of theorem \ref{thm:reg}}

We begin with a few reductions. First, we reproduce a useful argument which we learned from A. Polishchuk.

\begin{lemma}\label{lem:Gal-des}
   Suppose that $ \overline{\str}_{\cF_{<h}} (\cF_h) = t.$ Then there exist $a_1,\ldots,a_{s_h} \in K$ not all zero such that $\overline{\str}_{\cF_{<h}} (a_1F_{h,1}+\ldots+a_{s_h}F_{h,s_h}) \le s_h\cdot t$. 
\end{lemma}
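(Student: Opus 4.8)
The plan is a Galois descent argument of the type used for analogous statements about strength. Since $\overline{\str}_{\cF_{<h}}(\cF_h) = t$, I would begin by fixing a nonzero vector $b = (b_1,\ldots,b_{s_h})$ with entries in some finite extension $L/K$, together with a congruence
\[
b_1 F_{h,1}+\ldots+b_{s_h}F_{h,s_h} \equiv p_1 q_1 + \ldots + p_t q_t \pmod{(\cF_{<h})},
\]
where the $p_k,q_k$ are forms of lower degree (enlarging $L$ so that all coefficients above lie in it). The crucial point is that the forms $F_{h,j}$ and the ideal $(\cF_{<h})$ are defined over $K$: so for every embedding $\sigma\colon L\hookrightarrow \ol{K}$ fixing $K$, applying $\sigma$ to this congruence gives $\sum_j \sigma(b_j) F_{h,j} \equiv \sum_k \sigma(p_k)\sigma(q_k) \pmod{(\cF_{<h})}$, and hence $\overline{\str}_{\cF_{<h}}\big(\sum_j \sigma(b_j) F_{h,j}\big) \le t$ for each such $\sigma$.

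Next I would pass to the $\ol{K}$-span $W = \sp_{\ol{K}}\{\sigma(b)\} \subseteq \ol{K}^{s_h}$, which is $\mathrm{Gal}(\ol{K}/K)$-stable of some dimension $m \le s_h$. It contains a nonzero $K$-point: choosing $j$ with $b_j \ne 0$ and $\mu \in L$ with $\mathrm{Tr}_{L/K}(\mu)\ne 0$ (possible since $\ch K = 0$), the vector $a := \sum_\sigma \sigma(\mu b_j^{-1} b)$ is $\mathrm{Gal}(\ol{K}/K)$-fixed, hence lies in $K^{s_h}$, while its $j$-th entry equals $\mathrm{Tr}_{L/K}(\mu)\ne 0$, so $a\neq 0$. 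I would then choose embeddings $\sigma_1,\ldots,\sigma_m$ with $\sigma_1(b),\ldots,\sigma_m(b)$ a basis of $W$ and write $a = \sum_{l=1}^{m} c_l \sigma_l(b)$ for suitable $c_l \in \ol{K}$.

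To finish, I would invoke the facts that relative strength is scale-invariant and subadditive on forms of a common degree, which give, writing $(a_1,\ldots,a_{s_h}) := a$,
\[
\overline{\str}_{\cF_{<h}}\big(a_1 F_{h,1}+\ldots+a_{s_h}F_{h,s_h}\big) \;\le\; \sum_{l=1}^{m} \overline{\str}_{\cF_{<h}}\big(\textstyle\sum_j \sigma_l(b_j) F_{h,j}\big) \;\le\; m t \;\le\; s_h t,
\]
exactly the asserted bound. I do not anticipate a real obstacle; the only point that needs a moment's care is the descent step — that a $\mathrm{Gal}(\ol{K}/K)$-stable $\ol{K}$-subspace of $\ol{K}^{s_h}$ has a nonzero $K$-point — which the trace computation above handles using only separability of $L/K$. (Note that a naive trace argument alone produces a $K$-combination of all $[L:K]$ conjugates, giving only the bound $[L:K]\cdot t$; the improvement to $s_h t$ comes precisely from first cutting down to the at most $s_h$-dimensional span $W$.)
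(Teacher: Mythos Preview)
Your argument is correct and follows essentially the same Galois descent strategy as the paper: pass to the Galois-stable $\ol{K}$-span of the conjugates of the witnessing linear combination (dimension $\le s_h$), pick a nonzero $K$-point in it, and use subadditivity of relative strength on at most $s_h$ conjugates. The only cosmetic difference is that the paper works in the span of the forms $\sigma F$ and invokes Galois descent of subspaces abstractly, while you work in the coefficient space $\ol{K}^{s_h}$ and make the descent explicit via a trace.
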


\begin{proof}
    Suppose that $F = b_1F_{h,1}+\ldots+b_{s_h}F_{h,s_h}$ is a nontrivial $\ol{K}$-linear combination of the forms in $\cF_h$ with $\overline{\str}_{\cF_{<h}} (F)  = t. $ Let $ U = \sp_{\bar K}(\sigma F) \subset \sp_{\bar K}(\cF_h),$ where $\sigma F$ runs over all Galois conjugates. By Galois descent, $ \bar U = \sp_{\bar K} (G_1,\ldots,G_\ell) $ for some $\ell \le s_h$ and $G_i\in \sp_K(\cF_h)$. Choose some $0\neq G\in \sp_K (G_1,\ldots,G_\ell) \subset \sp_K(\cF_h)$. We may write $G = \alpha_1 \sigma_1 F+\ldots \alpha_\ell \sigma_\ell F$ for some $\alpha_i\in \bar K$ and Galois conjugates $\sigma_i F.$ Since $\sigma (\cF_{<h}) = \cF_{<h}$, we have $\overline{\str}_{\cF_{<h}} (\sigma_i F) = \overline{\str}_{\cF_{<h}} (F)  = t.$ By subadditivity,
    \[
    \overline{\str}_{\cF_{<h}} (G) = \overline{\str}_{\cF_{<h}} (\sum_{j=1}^\ell \alpha_j \sigma_j F) \le \sum_{j=1}^\ell \overline{\str}_{\cF_{<h}} (F) \le  s_h\cdot t.
    \]
\end{proof}

The proof of theorem  \ref{thm:reg} will be by induction on the degree sequence $\ul{d}$, where $\ul{e} < \ul{d}$ if $\ul{e}$ can be obtained from $\ul{d}$ by replacing some entry by any number of lower ones. We note another useful reduction.

\begin{lemma}\label{lem:non-max}
    Let $\ul{d} = (d_1,\ldots,d_h)$ be a degree sequence with $d_i > d_h$ for some $i < h.$ If theorem \ref{thm:reg} holds for degrees $<\ul{d}$ then it holds for degree $\ul{d}$.
\end{lemma}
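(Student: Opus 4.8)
The plan is to verify the conclusion of theorem~\ref{thm:reg} one layer at a time: fixing the target constants $C,D$, I will produce $C',D'$ so that whenever $\cF$ is $(C',D',r)$-strong one has $\overline{\str}_{\cF_{<\ell}}(\cF_\ell)>C(s_\ell+\dots+s_h+r)^D$ for every $\ell\in[h]$, which is exactly the assertion that $\cF\otimes\overline K$ is $(C,D,r)$-strong. Each such inequality will come from an application of theorem~\ref{thm:reg} to an auxiliary tower whose degree sequence lies strictly below $\ul d$. The mechanism that makes this work is the elementary remark that the relative strength of a form of degree $e$ depends only on the degree-$e$ graded piece of the ideal one reduces modulo; consequently any layer $\cF_j$ with $d_j>d_\ell$ is invisible to the computation of $\str_{\cF_{<\ell}}(\cF_\ell)$, and the same holds over $\overline K$.

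Concretely, for $\ell\in[h]$ let $\cG_\ell$ be the tower obtained from $\cF_{\le\ell}$ by deleting every layer $\cF_j$ with $j<\ell$ and $d_j>d_\ell$, keeping the surviving layers in their original order so that $\cF_\ell$ is on top. Its degree sequence is obtained from $\ul d$ by deleting entries, and this deletion is nonempty: for $\ell<h$ the entry $d_h$ has been removed, while for $\ell=h$ the hypothesis $d_i>d_h$ guarantees that the entry $d_i$ has been removed. Hence $\cG_\ell$ has degree sequence $<\ul d$. Next I would check that if $\cF$ is $(C',D',r)$-strong then $\cG_\ell$ is $(C',D',r+s_{\ell+1}+\dots+s_h)$-strong: for the top layer this holds because deleting layers of degree $>d_\ell$ does not change $\str_{\cF_{<\ell}}(\cF_\ell)$; for a surviving lower layer $\cF_j$, deleting generators can only increase its relative strength, and the bookkeeping inequality $s_j+\dots+s_\ell+(r+s_{\ell+1}+\dots+s_h)\le s_j+\dots+s_h+r$ shows the required bound is implied by the hypothesis on $\cF$.

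We are assuming theorem~\ref{thm:reg} for all degree sequences $<\ul d$, and only finitely many degree sequences arise among $\cG_1,\dots,\cG_h$; so we may take $C',D'$ to be the maximum of the finitely many constants produced by theorem~\ref{thm:reg} for those degree sequences (with target $C,D$), noting that passing to larger $C',D'$ only strengthens the hypothesis ``$(C',D',\cdot)$-strong''. Then each $\cG_\ell$ is absolutely $(C,D,r+s_{\ell+1}+\dots+s_h)$-strong, and reading off its top layer — using once more that layers of degree $>d_\ell$ do not affect the degree-$d_\ell$ relations over $\overline K$ — gives
\[
\overline{\str}_{\cF_{<\ell}}(\cF_\ell)=\overline{\str}_{(\cG_\ell)_{<\ell}}(\cF_\ell)>C\bigl(s_\ell+(r+s_{\ell+1}+\dots+s_h)\bigr)^D=C(s_\ell+\dots+s_h+r)^D .
\]
Ranging over $\ell$ gives that $\cF$ is absolutely $(C,D,r)$-strong, as required.

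I do not expect a genuine obstacle: the hypothesis $d_i>d_h$ is used in exactly one place — forcing the degree sequence of $\cG_h$ to be strictly below $\ul d$ — and every other layer is handled identically and essentially for free. The only point that needs care is the constant management in the last step: making sure a single pair $(C',D')$ attached to $\ul d$ dominates the constants demanded by all of the smaller degree sequences, and keeping straight that the ``$r$''-parameters in the auxiliary applications are shifted by $s_{\ell+1}+\dots+s_h$.
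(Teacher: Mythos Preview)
Your proof is correct and rests on the same idea as the paper's: layers of degree $>d_\ell$ contribute nothing to the degree-$d_\ell$ part of the ideal, so one may delete them and apply theorem~\ref{thm:reg} to a tower of strictly smaller degree sequence. The paper is simply more economical, using just two auxiliary towers---$\cF_{<h}$ to handle all layers $\ell<h$ at once, and $(\cF_j)_{j\neq i}$ to handle the top layer---rather than one tower $\cG_\ell$ per layer.
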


\begin{proof}
    Let $\cF$ be a tower of degree $\ul{d}$ which is $(C,D,r)$-strong. Then the tower $\cF_{<h}$ is $(C,D,r+s_h)$-strong, hence absolutely $(A,B,r+s_h)$-strong by theorem \ref{thm:reg}. The theorem also implies that the tower $(\cF_j)_{j\neq i}$ is absolutely $(A,B,r)$-strong, so in particular
    \[
    \ol{\str}_{\cF_{<h}} (\cF_h) = \ol{\str}_{(\cF_j)_{j\neq i,h}} (\cF_h)  > A(r+s_h)^B.
    \]
    Therefore $\cF$ is absolutely $(A,B,r)$-regular as claimed. 
\end{proof}
    
The inductive proof will establish the validity of the following statement for all $\ul{e},d$:

 \begin{description}[align=right,labelwidth=1.5cm,leftmargin=!]
\item[$\Sigma(\ul{e},d)$] There exist quantities $A(\ul{e},d),B(\ul{e},d)$ such that if $\cG$ is a multi-linear tower of degree $\ul{e}$ which is absolutely $(A,B,r)$-strong and $F$ is a multi-linear form of degree $d$ satisfying $\overline{\prk}_{\cG}(F) = r$, then $\prk_{\cG}(F) \le Ar^B$. 
\end{description}

We introduce a refinement of the partial order on degree sequences.

\begin{definition}
    For degree sequences $\ul{d},\ul{e}$ write $\ul{e} <^{\textnormal{top}} \ul{d}$ if:
\begin{enumerate}
    \item $\max_i e_i \le d$ and also 
    \item  $|\{i:e_i = d\}| < |\{i:d_i = d\}|$.
\end{enumerate}
\end{definition}

We will prove that $\Sigma(\ul{e},d)$ holds for \emph{every} $\ul{e} <^{\textnormal{top}} \ul{d}$, which immediately implies theorem \ref{thm:reg} for degree $\ul{d}$ by taking $\ul{e} = (d_1,\ldots,d_{h-1})$.

The proof will be by induction on the partitions appearing in the partition rank decomposition. For expository purposes we sketch the argument in a simple case.

\subsection*{Proof sketch} Suppose $\cG$ is a tower of bilinear forms and that $F:V^{[3]}\to K$ is trilinear with $\overline{\prk}_{\cG}(F) = 2$, given by $F(x,y,z) = \alpha (x) A(y,z) + \beta(z)B(x,y) \mod \cG$. The forms $\alpha,A,\beta,B$ have coefficients in $\overline{K}$ and our goal is to replace them by a small collection of forms with coefficients in $K$. Our assumption $\overline{\prk}_{\cG}(F) = 2$ implies that $\beta\neq 0$ and therefore we can choose a $K$-point $z_0\in V_3$ with $\beta(z_0) \neq 0$. Plugging this in yields $F(x,y,z_0) = \alpha(x) A(y,z_0) + \beta(z_0)B(x,y) \mod \cG(z_0)$, implying $B\in (\alpha,F(\cdot,z_0),\cG(z_0))$. Therefore $F = \alpha (x) A'(y,z) \mod \cG,\cG(z_0), F(\cdot,z_0)$. This last ideal is generated by forms with coefficients in $K$. By lemma \ref{lem:reg-der}, $\cG\cup\cG(z_0)$ is relatively strong for $z_0$ in a dense open subset of $V_3$. Applying proposition \ref{prop:rel-reg}, we can replace $F(\cdot,z_0)$ by a small collection $\cH$ which is relatively strong $\mod \cG\cup\cG(z_0)$. Now we have $\overline{\prk}_{\cG\cup \cG(z_0) \cup \cH}(F) \le 1$ but with only the partition $(\{1\},\{2,3\})$ appearing! Now we can apply an appropriate inductive hypothesis to get $\prk_{\cG\cup \cG(z_0) \cup \cH} (F) \le C $. This implies $\prk_{\cG\cup \cG(z_0)} (F) \le C+|\cH| \le C'$. Finally, we will see in lemma \ref{lem:gluing-deriv} that the fact that this is true for many choices of $K$-points $z_0$ implies $\prk_{\cG}(F) \le C''$. 

In general, to carry out the induction on partitions of $[d]$ we make the following definition. 

\begin{definition}
    Let $\cI$ be a collection of proper subsets of $[d],$ all containing the element $1.$ The $\cI$-rank, written $\prk^\cI(F),$ is the minimal $r$ such that $ F = \sum_{i=1}^r G_i(x_{I_i}) H_i(x_{[d]\setminus I_i})$,
    where $G_i,H_i$ are multi-linear forms and $I_i\in \cI$ for all $i\in [r].$ For example, taking $\cI = \{\{1\}\} \cup \{[d]\setminus \{j\}\}_{j\neq 1}$ yields the slice rank. The relative $\cI$-partition rank is defined analogously: $\prk^\cI_\cG(F) = \min \{ \prk^\cI(F+G): G\in I(\cG)_{[d]} \}.$ 
\end{definition}

We partially order these collections as follows: If $\cI'$ is obtained from $\cI$ by replacing some of the sets by any number (possibly zero) of their proper subsets which contain $1$ then $\cI' < \cI$. 

\begin{example}
The proper subsets of $[3]$ containing the element $1$ are $\{1\}, \{1,2\}, \{1,3\}$. In this case the nonempty collections of subsets are ordered as follows: 

\begin{center}

\begin{tikzpicture}[
    node distance=1.5cm,
    level distance=1.5cm
]

    \node (A) at (0, 3) {$\{\{1\},\{1,2\},\{1,3\}\}$};

    \node (B) at (0,1.5) {$\{\{1,2\},\{1,3\}\}$};
    
    \node (C1) at (-2, 0) {$\{\{1\},\{1,2\}\}$};
    \node (C2) at (2, 0) {$\{\{1\},\{1,3\}\}$};

    \node (D1) at (-2, -1.5) {$\{\{1,2\}\}$};
    \node (D2) at (2, -1.5) {$\{\{1,3\}\}$};

    \node (E) at (0, -3) {$\{\{1\}\}$};

    \draw (A) edge (B);
    \draw (B) edge (C1);
    \draw (B) edge (C2);
    \draw (C1) edge (D1);
    \draw (C2) edge (D2);
    \draw (D1) edge (E);
    \draw (D2) edge (E);

\end{tikzpicture}

\end{center}

\end{example}

With the above definition, we can now refine the statement $\Sigma(\ul{e},d)$.

 \begin{description}[align=right,labelwidth=1.5cm,leftmargin=!]
\item[$\Sigma^\cI(\ul{e},d)$] There exist quantities $A(\ul{e},d),B(\ul{e},d)$ such that if $\cG$ is a multi-linear tower of degree $\ul{e}$ which is absolutely $(A,B,r)$-strong and $F$ is a multi-linear form of degree $d$ satisfying $\ol{\prk}^\cI_{\cG}(F) \le r$, then $\prk^\cI_{\cG}(F) \le Ar^B$. 
\end{description}

Having set up our inductive framework and the appropriate reductions, our task is now to prove:

\begin{proposition}\label{prop:I-rank}
    Suppose theorem \ref{thm:reg} holds for multi-linear towers of degree $<\ul{d}$ and that $d = d_h = \max_i d_i$. Then $\Sigma^\cI(\ul{e},d)$ holds for every $\ul{e} <^{\textnormal{top}} \ul{d}$ and every collection $\cI$ of proper subsets of $[d]$, all containing the element $1$.
\end{proposition}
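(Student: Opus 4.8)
The proof goes by induction on the partition-rank structure, that is, by induction on the poset of collections $\cI$ introduced above, with the trivial collection $\cI = \{\{1\}\}$ as the base case and the full collection $\{\{1\}, \{1,2\}, \ldots\}$ eventually yielding $\Sigma(\ul{e},d)$ and hence theorem \ref{thm:reg}. Fix a collection $\cI$ and assume $\Sigma^{\cI'}(\ul{e}',d)$ for all $\ul{e}' <^{\mathrm{top}} \ul{d}$ and all $\cI' < \cI$, and also assume theorem \ref{thm:reg} for degree sequences $<\ul{d}$. We must show: if $\cG$ is a multi-linear tower of degree $\ul{e} <^{\mathrm{top}} \ul{d}$ which is absolutely $(A,B,r)$-strong for suitably large $A,B$, and $\ol{\prk}^\cI_{\cG}(F) \le r$, then $\prk^\cI_{\cG}(F) \le Ar^B$.

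\textbf{Base case.} When $\cI = \{\{1\}\}$, the hypothesis $\ol{\prk}^\cI_{\cG}(F) \le r$ says $F = \sum_{i=1}^r \alpha_i(x_1) A_i(x_{[d]\setminus 1}) \bmod (\cG)$ over $\ol K$, i.e.\ $F(x_1,\cdot)$ lies in an $r$-dimensional $\ol K$-space of multi-linear forms on $V^{[d]\setminus 1}$ modulo $\cG$. Here I would use a Galois-descent argument in the style of lemma \ref{lem:Gal-des}: the $K$-span of the $K$-conjugates of the relevant data is defined over $K$ and has bounded dimension, and combined with the fact that the degree-$d$ layer is not the maximal degree layer of $\cG$ (since $\ul{e}<^{\mathrm{top}}\ul{d}$, so we may invoke theorem \ref{thm:reg} / lemma \ref{lem:non-max} to understand the quotient by $\cG$) one extracts a $K$-rational bounded-rank expression. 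Alternatively, since $\cI = \{\{1\}\}$ corresponds to ``$F$ factors through the first variable,'' the condition $\ol\prk^{\{\{1\}\}}_\cG(F)\le r$ forces the map $x_1 \mapsto F(x_1,\cdot) \bmod \cG$ to have bounded-rank image, and bounded-rank subspaces defined over $\ol K$ that contain a $K$-point structure descend; I would make this precise using lemma \ref{lem:reg-der} to keep $\cG$ strong after specialization and proposition \ref{prop:rel-reg} to re-regularize.

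\textbf{Inductive step.} Write $\ol{\prk}^\cI_{\cG}(F) \le r$ as $F = \sum_{j} G_j(x_{I_j}) H_j(x_{[d]\setminus I_j}) \bmod (\cG)$ with $I_j \in \cI$, $r$ terms, coefficients in $\ol K$. Pick a maximal set $I \in \cI$ appearing; since $\ol\prk^\cI_\cG(F)=r$ is genuinely positive, the corresponding $H$ factor on $V^{[d]\setminus I}$ is nonzero, so we may choose a $K$-point $y_0 \in V^{[d]\setminus I}$ (or, more carefully, a $K$-point in the complement of a proper subvariety) with $H(\cdot, y_0) \ne 0$. Specializing the $[d]\setminus I$ coordinates outside $I$ to $y_0$ — here I follow exactly the proof sketch — expresses $H$ (restricted) as lying in the ideal generated by $G$, by $F(\cdot, y_0)$, and by $\cG(y_0)$; substituting back, $F \equiv \sum_{j'} G_{j'}(x_{I_{j'}}) H_{j'}(\cdots) \bmod (\cG, \cG(y_0), F(\cdot,y_0))$ with the term for $I$ eliminated, so only sets from a strictly smaller collection $\cI' < \cI$ survive. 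By lemma \ref{lem:reg-der}, $\cG \cup \cG(y_0)$ is absolutely strong for $y_0$ outside a set of bounded codimension; by proposition \ref{prop:rel-reg} we replace the forms $F(\cdot, y_0)$ by a small relatively-strong tower $\cH$, and by claim \ref{str-shuffle}(2) glue to get that $\cG \cup \cG(y_0) \cup \cH$ is a strong multi-linear tower whose degree sequence is still $<^{\mathrm{top}} \ul{d}$ (the added forms have degree $<d$ since they come from $F(\cdot,y_0)$ which has at least one coordinate fixed, and $\cG(y_0)$ only lowers degrees). Now $\ol\prk^{\cI'}_{\cG\cup\cG(y_0)\cup\cH}(F) \le r' $ for a bounded $r'$, so by the inductive hypothesis $\Sigma^{\cI'}$ we get $\prk^{\cI'}_{\cG\cup\cG(y_0)\cup\cH}(F) \le A(r')^B$, whence $\prk^\cI_{\cG\cup\cG(y_0)}(F)$ is bounded. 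Finally, a gluing lemma for derivatives — lemma \ref{lem:gluing-deriv}, applied to the fact that this bound holds for a Zariski-dense set of rational $y_0$ — upgrades this to a bound on $\prk^\cI_{\cG}(F)$ itself.

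\textbf{Main obstacle.} The delicate point is bookkeeping the quantifiers on the constants $A,B$ and the error parameter $r$: each specialization step (passing to $\cG(y_0)$, re-regularizing via proposition \ref{prop:rel-reg}, gluing via claim \ref{str-shuffle}) inflates $r$ polynomially and demands $\cG$ be strong with larger constants, and we must verify this does not spiral — i.e.\ that the polynomial degrees $B(\ul{e},d)$ chosen at collection $\cI$ can be made to dominate all the downstream demands from $\cI' < \cI$. Because the poset of collections $\cI$ is finite (for fixed $d$) and the degree-sequence induction on $<^{\mathrm{top}}$ is also finite, this is a finite bootstrap, but it requires care to state the induction so that the constants are chosen in the right order: first fix $d$, then induct downward on $\cI$, at each stage choosing $A(\ul e,d),B(\ul e,d)$ large enough in terms of the (already fixed) constants for all $\cI' < \cI$ and all $\ul e' <^{\mathrm{top}} \ul d$. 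A secondary technical point is ensuring the chosen $K$-point $y_0$ can simultaneously be taken in the strong-locus of lemma \ref{lem:reg-der}, in the nonvanishing locus of $H$, and in whatever dense open set lemma \ref{lem:gluing-deriv} needs — this is fine since $K$ is infinite (indeed a Brauer or number field) and we are intersecting finitely many dense opens, and the relevant varieties are complete intersections with rational points by lemma \ref{lem:str-prime} and proposition \ref{dense-ml}.
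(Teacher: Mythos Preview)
Your overall architecture matches the paper's: induct on $\cI$, specialize the coordinates outside a maximal $I\in\cI$, re-regularize via proposition \ref{prop:rel-reg}, apply the inductive hypothesis $\Sigma^{\cI'}$, and finish by gluing with lemma \ref{lem:gluing-deriv}. But the inductive step has a genuine gap.

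You write ``the corresponding $H$ factor on $V^{[d]\setminus I}$ is nonzero'' and propose specializing at a \emph{single} $K$-point $y_0$, following the proof sketch. The sketch, however, is a toy case with exactly one term for each partition; in general the decomposition has up to $r$ terms $\alpha_1(x)A_1(y)+\cdots+\alpha_r(x)A_r(y)$ all carrying the maximal index $I$. Plugging in one $y_0$ produces the single relation $\sum_i A_i(y_0)\,\alpha_i \equiv F(\cdot,y_0) - (\text{other terms})\bmod \cG(y_0)$, which lets you eliminate one $\alpha_i$ but leaves $r-1$ terms still indexed by $I$: you have not descended to a smaller $\cI'$. The paper resolves this by specializing simultaneously at $r$ points $y_1,\ldots,y_r\in Z(\cG_{[d]\setminus I})$, chosen (using irreducibility from lemma \ref{lem:str-prime} and density of $K$-points from proposition \ref{dense-ml}) so that the matrix $(A_i(y_j))_{i,j}$ is invertible; inverting it gives $\ol{\prk}_{\cG(\ul y)\cup F(\ul y,\cdot)}(\alpha_i)\le r^2$ for every $i$, and substituting back yields $\ol{\prk}^{\cI'}_{\cG\cup\cG(\ul y)\cup F(\ul y,\cdot)}(F)\le r^3$ with the explicit smaller collection $\cI'=(\cI\setminus\{I\})\cup\{I\cap J:\ J\in\cI,\ J\ne I\}$. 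Note also that the specialization points must lie in $Z(\cG_{[d]\setminus I})$, not merely in $V^{[d]\setminus I}$, for lemma \ref{lem:reg-der} and the fiber interpretation of $\cG(\ul y)$ to apply.

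A smaller point: the paper's base case is $\cI=\emptyset$, which is trivial (finite $\ol\prk^\emptyset_\cG(F)$ forces $F\in(\cG)$), and $\cI=\{\{1\}\}$ is treated as the first nontrivial instance of the same matrix argument. Your Galois-descent idea for $\cI=\{\{1\}\}$ can in fact be made to work---the intersection $\bigcap_\sigma \sigma\cdot\sp_{\ol K}(A_1,\ldots,A_r)$ inside $(\text{degree }d{-}1\text{ forms})/\cG$ is Galois-stable, at most $r$-dimensional, and contains $F(x_1,\cdot)$ for every $x_1$---but your sketch does not supply these details, and this descent does not extend to the general inductive step, where the analogous span is not Galois-stable. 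Finally, what you flag as the ``main obstacle'' (bookkeeping the constants $A,B$) is routine once the argument is correct; the real obstacle is the matrix-invertibility step you are missing.
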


Indeed, we have:

\begin{lemma}
    Proposition \ref{prop:I-rank} implies theorem \ref{thm:reg}.  
\end{lemma}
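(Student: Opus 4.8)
The plan is to run the promised induction on degree sequences, ordered so that $\ul e < \ul d$ when $\ul e$ results from $\ul d$ by replacing a single entry with any number of strictly smaller entries; this order is well-founded, with the empty sequence as the vacuous base case. By corollary \ref{cor:ml-reduction} I would work throughout with multi-linear towers. So I would fix a multi-linear tower $\cF$ of degree $\ul d = (d_1,\ldots,d_h)$, assume theorem \ref{thm:reg} for all strictly smaller degree sequences, and let $C,D$ be the prescribed target constants, the goal being to produce $C',D'$. If some $d_i > d_h$ with $i<h$, lemma \ref{lem:non-max} reduces to the inductive hypothesis, so I may assume $d_h = \max_i d_i =: d$, which is exactly the hypothesis of proposition \ref{prop:I-rank}.

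The next step is to apply proposition \ref{prop:I-rank} with $\cI$ equal to the collection of all proper nonempty subsets of $[d]$ containing $1$. Since any partition-rank-one form $G(x_I)H(x_{I^c})$ can be written with $1\in I$, we have $\prk^\cI = \prk$ (and likewise relatively), so this gives $\Sigma((d_1,\ldots,d_{h-1}),d)$: constants $A,B$, which I would take to be $\ge 1$, such that whenever $\cG$ is an absolutely $(A,B,\rho)$-strong multi-linear tower of degree $(d_1,\ldots,d_{h-1})$ and $F$ is a degree-$d$ multi-linear form with $\overline{\prk}_\cG(F)\le\rho$, one has $\prk_\cG(F)\le A\rho^B$. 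The remaining choice of $C',D'$ is bookkeeping. By a direct polynomial comparison, absolute $(A(C+1)^B,\,B(D+1),\,r+s_h)$-strength of $\cF_{<h}$ implies both its absolute $(A,B,C(s_h+r)^{D+1})$-strength and its absolute $(C,D,r+s_h)$-strength; and since $(d_1,\ldots,d_{h-1})<\ul d$, theorem \ref{thm:reg} for that degree sequence supplies constants $C_2,D_2$ so that $(C_2,D_2,r+s_h)$-strength of $\cF_{<h}$ over $K$, itself a consequence of $(C_2,D_2,r)$-strength of $\cF$ over $K$, yields that absolute strength. So I would set $C':=\max(C_2,AC^B)$ and $D':=\max(D_2,B(D+1))$.

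With $\cF$ assumed $(C',D',r)$-strong over $K$, the absolute $(C,D,r+s_h)$-strength of $\cF_{<h}$ already gives the needed lower bound on $\overline{\prk}_{\cF_{<i}}(\cF_i)$ for all $i<h$. For the top layer I would argue by contradiction: suppose $\overline{\prk}_{\cF_{<h}}(\cF_h)=t\le C(s_h+r)^D$. Lemma \ref{lem:Gal-des}, whose proof uses only subadditivity, Galois-invariance, and Galois descent of spans, and hence transfers verbatim from strength to $\overline{\prk}$, then produces a nonzero $K$-linear combination $F$ of $\cF_h$ with $\overline{\prk}_{\cF_{<h}}(F)\le s_h t\le C(s_h+r)^{D+1}$. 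Thus $\cF_{<h}$ is absolutely $(A,B,s_h t)$-strong, and $\Sigma((d_1,\ldots,d_{h-1}),d)$ gives $\prk_{\cF_{<h}}(F)\le A(s_h t)^B\le AC^B(s_h+r)^{B(D+1)}$, whereas $\prk_{\cF_{<h}}(F)\ge\prk_{\cF_{<h}}(\cF_h)>C'(s_h+r)^{D'}$ because $\cF$ is $(C',D',r)$-strong. Since $C'\ge AC^B$ and $D'\ge B(D+1)$, these contradict each other, so in fact $\overline{\prk}_{\cF_{<h}}(\cF_h)>C(s_h+r)^D$, which finishes the inductive step and hence the proof.

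I expect the only real obstacle to be the constant-chasing: one must verify that the chain of dependencies (target $(C,D)$, then the $\Sigma$-constants from proposition \ref{prop:I-rank}, then the amplified targets for $\cF_{<h}$, then the lower-degree constants from theorem \ref{thm:reg}) is acyclic, so that $C',D'$ end up depending only on $C,D,d$; the single non-formal input beyond proposition \ref{prop:I-rank} itself is the transfer of lemma \ref{lem:Gal-des} from strength to partition rank, which is immediate from its proof.
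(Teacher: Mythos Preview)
Your argument is correct and follows essentially the same route as the paper: induction on $\ul d$, reduction via lemma~\ref{lem:non-max} to the case $d_h=\max_i d_i$, use of the inductive hypothesis on $\cF_{<h}$, and then a contradiction at the top layer via lemma~\ref{lem:Gal-des} combined with proposition~\ref{prop:I-rank} applied with $\cI=\{I\subsetneq[d]:1\in I\}$. The only differences are cosmetic: you track the constants explicitly where the paper leaves them implicit, and you phrase everything in terms of $\prk$ rather than $\str$ (equivalent here, as you note).
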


\begin{proof}
    The proof is by induction on $\ul{d}$. When $\ul{d} = (1,\ldots,1)$ there is nothing to prove. Supposing that theorem \ref{thm:reg} holds for all multi-linear towers of degree $<\ul{d}$, we prove that it holds also in degree $\ul{d}$. If $d_h\neq \max_i d_i$ then we are done by lemma \ref{lem:non-max}, so assume that $d = d_h = \max_i d_i$. If $\cF$ is $(C,D,r)$-strong, then $\cF_{<h}$ is $(C,D,r+s_h)$-strong and hence is absolutely $(A,B,r+s_h)$-strong by theorem \ref{thm:reg}.  It remains to show that  $\ol{\str}_{\cF_{<h}} (\cF_h) > A(r+s_h)^B$. 
    
    Suppose, to get a contradiction, that this is not the case. By lemma \ref{lem:Gal-des} there are  $a_1,\ldots,a_{s_h}\in K$ not all zero such that $F = \sum_{j=1}^{s_h} a_j F_{h,j}$ satisfies $\ol{\str}_{\cF_{<h}} (F) \le A(r+s_h)^{B+1}$. By proposition \ref{prop:I-rank} applied with $\ul{e} = (d_1,\ldots,d_{h-1})$ and $\cI = \{I\subset [d]: 1\in I\}$ we deduce that $\str_{\cF_{<h}} (F) \le \alpha(r+s_h)^\beta$. For large $C,D$ this  contradicts the fact that $\cF$ is $(C,D,r)$-strong. 
\end{proof}

Sufficiently motivated, we now turn to proving proposition \ref{prop:I-rank}.   The following lemma plays a crucial role.

\begin{lemma}[Gluing low strength representations]\label{lem:gluing-deriv}
    Suppose theorem \ref{thm:reg} holds for multi-linear towers of degree $ <^{\textnormal{top}} \ul{d}$. Then there exist constants $A(\ul{e},d),B(\ul{e},d)$ such that the following holds. Let $\cG$ be a multi-linear tower of degree $\ul{e} <^{\textnormal{top}} \ul{d}$  which is absolutely $(A,B,r+m)$-strong and let $F$ be a multi-linear form of degree $d$ such that $\prk_{\cG\cup \cG(\ul{y})} (F) \le r$ for all $K$-points  $\ul{y}=(y_1, \ldots, y_m)$ in a dense open subset of $Z(\cG_I)^m.$ Then $\prk_{\cG}(F) \le Ar^B$.
\end{lemma}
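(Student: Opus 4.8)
The statement of Lemma~\ref{lem:gluing-deriv} asserts that if a multi-linear form $F$ of degree $d$ has low (absolute or rational?) partition rank relative to $\cG\cup\cG(\ul y)$ for \emph{all} rational points $\ul y$ in a dense open subset of $Z(\cG_I)^m$, then $F$ already has low partition rank relative to $\cG$ alone, without the fibre equations. So we must ``glue'' the fibrewise representations into one over the base. The natural strategy is to promote the hypothesis into a geometric statement on a suitable total space and then apply the regularization machinery (Proposition~\ref{prop:rel-reg}) together with the already-proven theorem~\ref{thm:reg} in smaller degree sequences.

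First I would set up the total space. Let $X = Z(\cG_I)^m$ and consider the family over $X$ whose fibre over $\ul y$ is the affine space of $r$-tuples $((G_i,H_i))_{i\in[r]}$ of multi-linear forms of the relevant complementary degrees together with the multi-linear forms $G$ in the ideal $(\cG\cup\cG(\ul y))_{[d]}$, subject to the equation $F = \sum_i G_i(x_{I_i})H_i(x_{[d]\setminus I_i}) + G$. The hypothesis says that the projection from this family to $X$ is dominant onto $X(K)$-points, hence dominant as a map of varieties; since $X$ is irreducible (it is a power of the complete intersection $Z(\cG_I)$, irreducible by lemma~\ref{lem:str-prime} after fixing $C,D$ large), by a spreading-out / constructibility argument there is a dense open $U\subseteq X$ over which the fibre is nonempty. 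Pick a generic point of $X$; over it we obtain a representation $F = \sum_i G_i H_i + G$ with $G\in(\cG\cup\cG(\ul y))$ where now $\ul y$ is the generic point, i.e.\ the coordinates $y_1,\dots,y_m$ are transcendental. Unwinding $\cG(\ul y)$: each generator of $(\cG(\ul y))_{[d]}$ is of the form $G'_{h,j}(x_{[d]\setminus I_h},\cdot)$ evaluated with some of the $[d]$-coordinates replaced by the $y_k$'s; so $G\in (\cG, \{\text{certain multi-linear forms built from }\cG\text{ and the }y_k\})$.

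The key idea now is the \emph{external cloning} already introduced: realize $\cG\cup\cG(\ul y)$, for generic $\ul y$, as a fibre $\cF''(\ul y)$ of a single regular multi-linear tower $\cF''$ (this is exactly the construction in the proof of lemma~\ref{lem:reg-der}), obtained from $\cG$ by externally doubling the $I$-coordinates and then internally cloning $m$ times; by lemma~\ref{lem:clone-str} and lemma~\ref{lem:fixed-reg}, $\cF''$ is absolutely $(A',B',r)$-strong once $\cG$ is absolutely $(C,D,r+m)$-strong, and $\cF''(\ul y) = \cG\cup\cG(\ul y)$ for $\ul y$ outside a set of codimension $\ge r$. The representation at the generic point thus says: $\overline{\prk}_{\cF''}(F') \le r$ for a suitable lift $F'$ of $F$ to the cloned coordinate space (noting $F$ does not involve the new copies of the $I$-coordinates, so $F' = F$ under the inclusion). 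But $\cF''$ has degree sequence $\ul e$ (the degrees of $\cG$ are unchanged by cloning), which is $<^{\mathrm{top}}\ul d$ by assumption, and $F$ has degree $d=d_h=\max$. Apply $\Sigma(\ul e,d)$ — which follows from theorem~\ref{thm:reg} for degree sequences $<^{\mathrm{top}}\ul d$, or more precisely we invoke Proposition~\ref{prop:I-rank} / the induction hypothesis of the excerpt — to conclude $\prk_{\cF''}(F) \le \alpha r^\beta$. Fixing the internally-cloned coordinates back to a single $K$-point $\ul y_0\in U(K)$ (which exists: $Z(\cG_I)^m(K)$ is Zariski dense by theorem~\ref{thm:dense}, applied in smaller degree) specializes this to $\prk_{\cG\cup\cG(\ul y_0)}(F)\le\alpha r^\beta$ — but that is weaker than what we want. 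The actual gain must come from the other direction: at the \emph{generic} point the representation $F=\sum G_iH_i+G$ with $G$ in the cloned ideal, upon specializing the $y_k$ to a single point, still has $\le \alpha r^\beta$ terms plus a controlled ideal contribution, and because the cloned ideal at the generic point \emph{pulls back to an ideal generated over $\cG$ by bounded-many forms}, spreading the generic representation out and clearing denominators yields $\prk_{\cG}(F)\le \alpha' r^{\beta'}$ directly. Concretely: write the generic representation with coefficients rational in the $y_k$, multiply through by a common denominator, and observe that every term either is already a valid partition-rank-one term over $\cG$ (the $G_iH_i$), or lies in $(\cG)_{[d]}$ (the contribution of $G$ once we remember $\cG(\ul y)\subset$ the cloned tower whose extra coordinates we are free to set, so the only genuinely new generators disappear after specialization into $\cG$'s own coordinates) — hence $F\in (\cG)_{[d]} + \{\text{$\le\alpha r^\beta$ rank-one forms}\}$.

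\textbf{Main obstacle.} The delicate point is the bookkeeping that turns ``$F$ has bounded $\cG\cup\cG(\ul y)$-rank for the \emph{generic} $\ul y$'' into ``$F$ has bounded $\cG$-rank'': a priori the representation over the generic point has coefficients that are rational functions of $y_1,\dots,y_m$, and the forms $G_i,H_i$ themselves may depend on $\ul y$, so clearing denominators could blow up the number of partition-rank-one terms. The clean way around this, which I expect to be the crux, is precisely the cloning reformulation: view the dependence on $\ul y$ as dependence on genuine extra coordinates of a single regular tower $\cF''$, so that ``generic $\ul y$'' becomes ``the generic point of $\cF''$-coordinate space'' and $\overline{\prk}_{\cF''}(F)\le r$ becomes an honest statement about one form over one regular tower of smaller degree sequence; then theorem~\ref{thm:reg} in smaller degree (legitimate by the outer induction) does all the work, and the final specialization back to a single rational $\ul y_0$ — legal by density of rational points on $Z(\cG_I)^m$, lemma~\ref{lem:reg-der} guaranteeing $\ul y_0$ can be chosen in the good open set — loses nothing because partition rank can only drop under specialization of coordinates that $F$ does not involve. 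Checking that $F$ indeed does not involve the cloned copies, and that $\cF''(\ul y_0)=\cG\cup\cG(\ul y_0)$ so the relative rank bounds match up, is the routine but essential verification.
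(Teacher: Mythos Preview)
Your approach has two genuine gaps. \textbf{Circularity}: you invoke $\Sigma(\ul e,d)$ to pass from $\overline{\prk}_{\cF''}(F)\le r$ to $\prk_{\cF''}(F)\le\alpha r^\beta$, but $\Sigma(\ul e,d)$ is exactly what Proposition~\ref{prop:I-rank} establishes, and the present lemma is invoked inside its inductive step. You suggest it follows from Theorem~\ref{thm:reg} for degree sequences $<^{\textnormal{top}}\ul d$, but that would require the theorem for the sequence $(\ul e,d)$, which has one more copy of $d$ than $\ul e$ and so need not be $<^{\textnormal{top}}\ul d$ (when $\ul d=(d)$ and $\ul e=()$ one has $(\ul e,d)=\ul d$). \textbf{The generic-point step fails}: the representation you extract at the generic point has $G_i,H_i$ multi-linear in the $V^{[d]}$-variables but with coefficients \emph{rational} in $\ul y$; clearing denominators produces polynomials of uncontrolled degree in $\ul y$, not multi-linear forms on the cloned space, so no bound on $\overline{\prk}_{\cF''}(F)$ follows. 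You yourself note that specializing $\ul y$ back to a $K$-point only recovers the hypothesis, and your fallback claim that ``the cloned ideal pulls back into $(\cG)$'' is false --- the gap between $(\cG\cup\cG(\ul y))$ and $(\cG)$ is precisely what the lemma must bridge.

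The paper's argument is completely different and uses multi-linearity of $F$ in an elementary but essential way. One picks not one but $2^d$ independent generic $K$-points $\ul y^{(1)},\dots,\ul y^{(2^d)}\in Z(\cG_I)^m$ (density of $K$-points from Proposition~\ref{dense-ml}, joint regularity of $\cG\cup\cG(\ul y^{(1)})\cup\cdots\cup\cG(\ul y^{(2^d)})$ from Lemma~\ref{lem:reg-der}). The workhorse is a two-term gluing step (Lemma~\ref{lem:gluing-two}): if $\prk_{\cG\cup\cG_i}(F)\le r$ for $i=1,2$ with the union regular, then $\prk_{\cG\cup(\cG_1\cup\cG_2)_{[d-1]}}(F)$ is polynomially bounded --- one may discard the dependence of $\cG_1,\cG_2$ on the $d$-th coordinate. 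To see this, write the hypothesis as $F\restriction_{Z(\cG\cup\cG_i,Q^{(i)})}=0$ for forms $Q^{(i)}$ in coordinates $[d-1]$; at a generic point of the combined zero locus the linear forms $\cG(x),\cG_1(x),\cG_2(x)$ in the last variable are independent, so one can split $y=y_1+y_2$ with $(x,y_i)\in Z(\cG\cup\cG_i)$, whence $F(x,y)=F(x,y_1)+F(x,y_2)=0$ by linearity in the last slot; Lemma~\ref{lem:null-str} then converts this vanishing into a rank bound. Iterating $d$ times (Corollary~\ref{cor:gluing-many}) doubles the number of $\cG_i$'s at each step while stripping one coordinate from them, until after $d$ steps nothing remains of the auxiliary towers and one is left with $\prk_{\cG}(F)\le Ar^B$.
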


We assume this lemma for now and defer its proof to later. 

 

\begin{proof}[Proof of proposition \ref{prop:I-rank}]
    
    The proof is by induction on $\cI$. The base of the induction is when $\cI$ is empty. If  $\overline{\prk}^\emptyset_\cG(F)$ is finite then $F\in (\cG)$ so $\prk_\cG(F) = 0.$ 
    
    To illustrate the argument, we explicitly work out the first non-trivial case $\cI = \{1\}$. In this case, our assumption is
\begin{equation}\label{eq:slice-rk}
    F = \sum_{i=1}^r \alpha_i (x_1)A_i(y) \mod \cG,
\end{equation}
where $y\in V^{[2,d]}.$ Shortening the sum if necessary,
we may assume that $A_i \mod \cG$ are linearly independent. Let $Z = Z(\cG_{[2,d]}) \subset V^{[2,d]}.$ We claim that the image of $\ul{A}:Z \to \bar K^r$ is not contained in any proper subspace. This is because otherwise some non-trivial linear combination $A = c_1A_1+\ldots+c_rA_r$ vanishes identically on $Z$, implying  $A\in (\cG_{[2,d]})$ since $\cG_{[2,d]}$ is prime by lemma \ref{lem:str-prime}. This would contradict the linear independence of $A_i \mod \cG.$

Consider the map $\ul{A}^r:Z^r\to M_{r\times r}(\bar K).$ We've just seen that the image of $\ul{A}$ is not contained in any proper subspace and therefore  the image of $\ul{A}^r$ contains an invertible matrix. It follows that for a dense open set of $\ul{y} = (y_1,\ldots,y_r)\in Z^r,$ the matrix $(A_i(y_j))$ is invertible. Proposition \ref{dense-ml} implies that we may choose $\ul{y}$ to be a $K$-point with this property. Plugging into equation \eqref{eq:slice-rk} yields
\[
F(x_1,y_j) = \sum_{i=1}^r \alpha_i (x_1)A_i(y_j) \mod \cG(y_j) \ \textnormal{for all } 1\le j\le r.
\]
Multiplying this system of equations by the inverse matrix $(A_i(y_j))^{-1},$ we deduce that $\alpha_i \in \sp_{\bar K} (\cG(\ul y),F(\cdot,\ul y))$ and therefore $F\in (\cG\cup \cG(\ul y)\cup F(\cdot,\ul y)).$ Note that $\cG,\cG(\ul y),F(\cdot,\ul y)$ are linear forms with \emph{coefficients in $K$} so $\prk_{\cG\cup \cG(\ul y)}(F) \le r$.\footnote{The fact that $F$ is in the ideal means that a certain set of linear equations has a solution in $\overline{K}$. These equations have coefficients in $K$ and thus there is a solution in $K$ as well.} The proof is completed by invoking lemma \ref{lem:gluing-deriv} to obtain $\prk_\cG(F) \le Ar^B$. 

Now we prove the general inductive step. Let $\cI$ be a collection of subsets containing the element $1,$ such that $\overline{\prk}^\cI_\cG(F) \le t.$ Let $I\in \cI$ be a \emph{maximal} element with respect to inclusion. By assumption, we can write 
\begin{equation}\label{eq:gen-case}
    F = \sum_{i=1}^r \alpha_i(x)A_i(y) + \sum_{i=1}^r \beta_i(x_{I_i})B_i(x_{[d]\setminus I_i}) \mod \cG,
\end{equation}
where $y\in V^{[d]\setminus I},\ x\in V^I$ and $I\neq I_i\in\cI$ for all $i.$  By shortening the sum if necessary, we may assume that the $A_i \mod \cG$ are linearly independent. 

As in the argument above, this implies that for generic $K$-points $\ul{y}\in Z(\cG_{[d]\setminus I})^r,$ the matrix $(A_i(y_j))$ is invertible. Plugging this into equation \eqref{eq:gen-case},
\[
F(\cdot,y_j) = \sum_{i=1}^r \alpha_i(x)A_i(y_j) + \sum_{i=1}^r \beta_i(x_{I_i\cap I},y_{[d]\setminus I})B_i(x_{I\setminus I_i},y_{[d]\setminus I}) \mod \cG(y_j)\; \textnormal{for all } 1\le j\le r.
\]
Note that the second sum on the right hand side has partition rank $\le r$. Otherwise there must be some $i\in [r]$ with $I\subseteq I_i$ or $I\subseteq [d]\setminus I_i$. The first inclusion is impossible because $I$ is maximal in $\cI$ and the second inclusion is impossible because $1\in I\cap I_i$.
Multiplying by the inverse matrix yields $\overline{\prk}_{\cG(\ul{y}) \cup F(\ul{y},\cdot)}(\alpha_i) \le r^2 $ for all $i\in [r].$ Then 
\[
\overline{\prk}^{\cI'}_{\cG\cup \cG(\ul{y}) \cup F(\ul{y},\cdot)}(F) \le r^3,
\]
where $\cI' = \{I\cap J: I\neq J\in \cI\} < \cI$.

For any fixed $K$-point $\ul{y},$ we can apply proposition \ref{prop:rel-reg} to replace $F(\ul{y},\cdot)$ by a tower $\cH_{\ul{y}}$ of $K$-forms of size $\le \alpha r^\beta$ such that $\cH_{\ul{y}} \mod \cG\cup \cG(\ul{y})$ is $(C,D,r)$-strong and also $(F(\ul{y},\cdot)) \subset (\cG\cup \cG(\ul{y})\cup \cH_{\ul{y}})$.  For generic $\ul{y}\in Z^r$, The tower $\cG\cup \cG(\ul{y})$ is absolutely $(C,D,r)$-strong by lemma \ref{lem:reg-der}. Choosing $\ul{y}$ to be such a $K$-point, the tower $\cP_{\ul{y}} = \cG\cup \cG(\ul{y})\cup \cH_{\ul{y}}$ is $(C,D,r)$-strong. Our assumption that $\ul{e} <^{\textnormal{top}} \ul{d}$ implies that $\cP_{\ul{y}}$ has degree $\ul{d}'< \ul{d}$ and so by theorem \ref{thm:reg} it is absolutely $(A,B,r)$-strong. Recall that $\ol{\prk}^{\cI'}_{\cP_{\ul{y}}}(F) \le r^3$ by construction. 

Applying our inductive hypothesis that the statement $\Sigma^{\cI'}(\ul{d}',d)$ holds, we get that $\prk_{\cP_{\ul{y}}}(F) \le Ar^B$, so that $\prk_{\cG\cup \cG(\ul{y})} (F) \le Ar^B+|\cH| \le A'r^{B'}$. Applying lemma \ref{lem:gluing-deriv}, we deduce that $\prk_\cG (F) \le A''r^{B''}$, completing the inductive step. 
\end{proof}


\subsection{Proof of lemma \ref{lem:gluing-deriv}}

We now pay our dues by proving lemma \ref{lem:gluing-deriv}. From here on we assume that theorem \ref{thm:reg} holds for multi-linear towers of degree $ <^{\textnormal{top}} \ul{d}$. The proof will proceed by "gluing" the various low rank representations together, a process introduced in \cite{LZ-rel}. We begin with a useful consequence of proposition \ref{prop:rel-reg}.

\begin{lemma}\label{rel-rel-reg}
    Given $A,B$ there exist $C(\ul{e},d),D(\ul{e},d)$ such that the following holds. If $\cG$ is a multi-linear tower of degree $\ul{e}$ which is  $(C,D,r)$-strong and $H_1,\ldots,H_r$ are multi-linear forms of degrees $< d$ then there exists a tower $\cH$ of degree $(1,\ldots,d-1)$ such that:
    \begin{enumerate}
        \item $H_i\in (\cG\cup\cH)$ for all $i\in [r]$,
        \item $|\cH| \le Cr^d$ and 
        \item $\cG\cup\cH$ is $(A,B,1)$-strong.  
    \end{enumerate}
        
\end{lemma}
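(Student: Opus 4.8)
The plan is to build $\cH$ by running the efficient regularization of Proposition~\ref{prop:rel-reg}, but performed \emph{relative to} $\cG$ — that is, inside the quotient $R/(\cG)$ and with partition rank in place of strength — and then to glue the resulting tower on top of $\cG$ by Claim~\ref{str-shuffle}(2). Concretely, fix in advance two large constants $C_0 = C_0(A,B)$, $D_0 = D_0(A,B)$, and form the initial tower by placing each $H_i$ into the layer indexed by its degree, which lies in $\{1,\dots,d-1\}$ since $\deg H_i < d$. Then regularize exactly as in the proof of Proposition~\ref{prop:rel-reg}: whenever some layer $i$ of the current tower $\cH'$ has relative partition rank (computed in $R/(\cG\cup\cH'_{<i})$) at most $C_0 n_i^{D_0}$, pick a nontrivial witness $\sum_j a_j H'_{i,j} \equiv \sum_k P_k Q_k \pmod{(\cG\cup\cH'_{<i})}$ with at most $C_0 n_i^{D_0}$ terms and $P_k,Q_k$ multi-linear on complementary coordinate sets, delete one $H'_{i,j}$ with $a_j\ne 0$, and add each $P_k$ (of strictly smaller degree, hence landing in a lower layer) to the tower. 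The quantities $n_i$ are defined by the same downward recursion as in Proposition~\ref{prop:rel-reg}, $n_{d-1}=m_{d-1}$ and $n_i = m_i + C_0 n_{i+1}^{D_0+1}$, with $m_i$ the number of $H_j$ of degree $i$, so $\sum_i m_i = r$ and each $n_i$ is a polynomial in $r$ with coefficients depending only on $C_0,D_0,d$.

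Everything in the proof of Proposition~\ref{prop:rel-reg} now transplants verbatim: the process terminates (the lexicographic descent argument is unchanged), $n_i$ bounds the total number of forms ever appearing in layers $\ge i$, and hence the final tower $\cH$ satisfies $|\cH|\le n_1$, polynomial in $r$. The point is that the argument uses only subadditivity of relative partition rank and the fact that a low-rank witness exchanges one top-layer form for a bounded number of strictly lower-degree forms — properties insensitive to replacing $K[x_1,\dots,x_n]$ by $R/(\cG)$ and strength by partition rank. At termination we obtain a multi-linear tower $\cH$ of degree $(1,\dots,d-1)$ with $H_i\in(\cG\cup\cH)$ for all $i$ (requirement~(1), by transitivity of the ideal containments introduced at each step), with $|\cH|$ polynomial in $r$ (requirement~(2), absorbing constants into $C$), and with $\str_{(\cG\cup\cH_{<i})}(\cH_i) > C_0 n_i^{D_0}$ for every layer $i$.

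It remains to check requirement~(3). Since the cumulative layer count $|\cH_i|+\dots+|\cH_{d-1}|$ is at most $n_i$, choosing $D_0\ge B$ and $C_0 > A2^B$ gives $\str_{(\cG\cup\cH_{<i})}(\cH_i) > C_0 n_i^{D_0} > A(n_i+1)^B \ge A\bigl(|\cH_i|+\dots+|\cH_{d-1}|+1\bigr)^B$, i.e.\ $\cH\bmod\cG$ is $(A,B,1)$-strong. On the other hand $\cG$ is $(C,D,r)$-strong by hypothesis; since $|\cH|$ is bounded by a fixed polynomial in $r$ with constants depending only on $A,B,d$, for every layer $i$ of $\cG$ one has $(\sigma_i + 1 + |\cH|)^B \le \gamma\,(\sigma_i + r)^{\delta}$ for a constant $\gamma$ and exponent $\delta$ depending only on $A,B,d$ (here $\sigma_i$ is $\cG$'s cumulative layer count from level $i$ up). Hence, taking $C,D$ large enough as functions of $A,B$ (and, harmlessly, $\ul e,d$) makes $\cG$ an $(A,B,1+|\cH|)$-strong tower, and Claim~\ref{str-shuffle}(2), applied with bottom tower $\cG$ and top tower $\cH$, shows that $\cG\cup\cH$ is $(A,B,1)$-strong.

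The one genuinely non-routine step — and the one I expect to take the most care to state cleanly rather than to execute — is the claim in the second paragraph that the efficient regularization of Proposition~\ref{prop:rel-reg} can be run relative to the fixed strong tower $\cG$: the iteration, its termination, and its polynomial size bound all take place unchanged inside $R/(\cG)$ with relative partition rank. The remainder is bookkeeping of constants, ensuring $C,D$ are large enough that $\cG$'s strength survives the harmless shift $r\mapsto 1+|\cH|$ and that each new layer of $\cH$ clears the $(A,B,1)$ threshold.
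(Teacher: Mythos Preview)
Your proposal is correct and follows essentially the same approach as the paper: apply the regularization of Proposition~\ref{prop:rel-reg} to $H_1,\dots,H_r$ working modulo $(\cG)$ to produce $\cH$ with $\cH\bmod\cG$ being $(A,B,1)$-strong and $|\cH|$ polynomial in $r$, then invoke Claim~\ref{str-shuffle} to upgrade $\cG$'s $(C,D,r)$-strength to $(A,B,1+|\cH|)$-strength and conclude that $\cG\cup\cH$ is $(A,B,1)$-strong. The paper's proof simply cites Proposition~\ref{prop:rel-reg} and Claim~\ref{str-shuffle} directly rather than unpacking the iteration, but the content is the same.
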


\begin{proof}
    By proposition \ref{prop:rel-reg}, there exists $\cH$ of size $< A'r^{B'}$ and degree $(1,\ldots,d-1)$ such that $H_i\in (\cG\cup\cH)$ for all $i\in [r]$ and $\cH \mod (\cG)$ is $(A,B,1)$-strong. By claim \ref{str-shuffle}, if $\cG$ is $(C,D,r)$-strong then it is also $(A,B,A'r^{B'})$-strong. By the same claim, in this case the tower $\cG\cup\cH$ is $(A,B,1)$-strong as desired. 
\end{proof}

\begin{lemma}\label{lem:null-str}
    There exist $C,D$ such that the following holds. Suppose that $\cG$ is a multi-linear tower of degree $\ul{e} <^{\textnormal{top}} \ul{d}$ which is $(C,D,r)$-strong and $F$ is a multi-linear form of degree $d$. If there are multi-linear forms $H_1,\ldots,H_r$ of degree $<d$ with $F\restriction_{Z(\cG,H_1,\ldots,H_r)} = 0$ then $\prk_{\cG}(F) \le Cr^D$.
\end{lemma}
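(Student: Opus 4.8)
The plan is to replace the hypothesis ``$F$ vanishes on $Z(\cG,H_1,\ldots,H_r)$'' by a membership statement $F\in(\cG\cup\cH)$ for a suitable \emph{strong} tower $\cH$ of forms of degree $<d$, and then to read off a partition-rank bound for $F$ directly from the multi-linear structure of such a membership. The key point is that an ideal generated by a sufficiently strong tower is prime (lemma \ref{lem:str-prime}), so $F$ cannot vanish on its zero set without actually lying in it, and a representation of a multi-linear form modulo $(\cG)$ using only forms of degree $<d$ automatically has small partition rank.

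Concretely, I would first apply lemma \ref{rel-rel-reg} to $H_1,\ldots,H_r$: with the target strength parameters chosen large, this gives, for $\cG$ that is $(C,D,r)$-strong with $C,D$ large enough, a tower $\cH$ of degree $(1,\ldots,d-1)$ with $H_i\in(\cG\cup\cH)$ for all $i$, with $|\cH|\le Cr^{d}$, and with $\cG\cup\cH$ being $(C',D',1)$-strong for prescribed $C',D'$. Since $\cH$ contributes only layers of degree $\le d-1$, the combined tower $\cG\cup\cH$ still has degree $<^{\textnormal{top}}\ul{d}$, so by the inductively available case of theorem \ref{thm:reg} it is absolutely $(C'',D'',1)$-strong, and hence by lemma \ref{lem:str-prime} the ideal $(\cG\cup\cH)$ is prime and cuts out an irreducible complete intersection. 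From $H_i\in(\cG\cup\cH)$ we obtain $Z(\cG\cup\cH)\subseteq Z(\cG,H_1,\ldots,H_r)$, so $F$ vanishes on $Z(\cG\cup\cH)$; by the Nullstellensatz and primeness we conclude $F\in(\cG\cup\cH)$, and since $F$ and the generators are $K$-rational the representation may be taken over $K$ (if one wishes to argue only with $K$-points one first invokes proposition \ref{dense-ml} for density). Now extract the multi-degree $(1,\ldots,1)$ component of such a representation: this yields $F=G+\sum_{h\in\cH}Q_h h$ where $G\in(\cG)$ is multi-linear of degree $d$ and each $Q_h$ is multi-linear of the complementary degree. As each $h$ has degree $<d$, every summand $Q_h h$ has partition rank one, so $\prk(F-G)\le|\cH|\le Cr^{d}$; since $G\in(\cG)$ this gives $\prk_{\cG}(F)\le Cr^{d}$, which is the assertion with $D=d$ (after enlarging $C,D$ so that ``$(C,D,r)$-strong'' implies all the hypotheses needed above).

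Most of this is routine once lemma \ref{rel-rel-reg} is in hand; the two points that need attention are (i) checking that regularizing $H_1,\ldots,H_r$ into $\cH$ does not raise the number of top-degree layers, so that $\cG\cup\cH$ remains $<^{\textnormal{top}}\ul{d}$ and the inductive case of theorem \ref{thm:reg} genuinely applies to it --- this is immediate because $\cH$ carries only degrees $\le d-1$ --- and (ii) the passage from set-theoretic vanishing of $F$ on $Z(\cG\cup\cH)$ to honest ideal membership, which relies on the primeness supplied by lemma \ref{lem:str-prime} together with a routine descent to keep the representation over $K$. I do not expect a substantive obstacle: the entire content is ``regularize the auxiliary forms $H_i$ into a strong tower, and then a prime ideal cannot hide $F$.''
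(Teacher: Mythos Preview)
Your proposal is correct and follows essentially the same route as the paper's proof: regularize $H_1,\ldots,H_r$ into a tower $\cH$ via lemma \ref{rel-rel-reg}, invoke the inductive case of theorem \ref{thm:reg} (available since $\cG\cup\cH$ still has degree $<^{\textnormal{top}}\ul d$) together with lemma \ref{lem:str-prime} to get primeness, then use the Nullstellensatz plus $K$-descent to conclude $F\in(\cG\cup\cH)$ and read off $\prk_{\cG}(F)\le|\cH|$. Your explicit extraction of the multi-degree $(1,\ldots,1)$ component to justify the final partition-rank bound spells out a step the paper leaves implicit.
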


\begin{proof}
    By lemma \ref{lem:str-prime}, there exist $A',B'$ such that if a multi-linear tower of degree $(e_1,\ldots,e_h,1,\ldots,d-1)$  is absolutely $(A',B',1)$-strong then it generates a prime ideal. By theorem \ref{thm:reg}, there exist $A,B$ such that it is enough for such a tower to be $(A,B,1)$-strong. Applying lemma \ref{rel-rel-reg}, the resulting tower $\cG\cup\cH$ is $(A,B,1)$-strong and $F\restriction_{Z(\cG\cup\cH)} = 0$. By the nullstellensatz,\footnote{Really the nullstellensatz implies that $F$ is in the ideal generated by multi-linear forms with $\overline{K}$-coefficients. But this just means that a certain set of linear equations has a solution in $\overline{K}$. These equations have coefficients in $K$ and thus there is a solution in $K$ as well.} we deduce that $F\in (\cG\cup\cH)$ and so $\prk_{\cG}(F) \le |\cH| \le Cr^D$. 
\end{proof}

We now state our "gluing" lemma, which allows us to combine information from low rank representation on different towers.

\begin{lemma}\label{lem:gluing-two}
    For any $\ul{e}<^{\textnormal{top}} \ul{d}$ there exist $A(\ul{e},d),B(\ul{e},d)$ such that the following holds. Suppose that $\cG\cup\cG_1\cup\cG_2$ is a multi-linear tower of degree $\ul{e}$ which is absolutely $(A,B,r)$-strong and that $F$ is a multi-linear form of degree $d$. If we have $\prk_{\cG\cup \cG_i} (F) \le r$ for $i = 1,2$ then $\prk_{\cG\cup(\cG_1\cup\cG_2)_{[d-1]}}(F) \le Ar^B$.
\end{lemma}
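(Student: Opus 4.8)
Write $\cG' := \cG \cup (\cG_1 \cup \cG_2)_{[d-1]}$, so that the conclusion to be proved is $\prk_{\cG'}(F) \le A r^{B}$. The plan is to reduce this to the statement that $F$ vanishes on the zero locus of a polynomially-sized, strong enlargement of $\cG'$, and then run the argument of lemma \ref{lem:null-str} --- regularize, pass to a prime ideal, apply the Nullstellensatz --- with an added fiberwise analysis over the coordinate $d$. \emph{Step 1 (low-degree witnesses).} For each $i \in \{1,2\}$, since $\prk_{\cG\cup\cG_i}(F) \le r$ we may write $F + R_i = \sum_{k=1}^{r} P^{(i)}_k Q^{(i)}_k$ with $R_i$ a multidegree-$(1,\ldots,1)$ element of the $K$-ideal $(\cG\cup\cG_i)$ and each $P^{(i)}_k Q^{(i)}_k$ of partition rank one. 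As $d = \max_j d_j$, the form $F$ involves the coordinate $d$, so in each summand exactly one factor --- call it $L^{(i)}_k$ --- does not involve the coordinate $d$; it is multi-linear over $K$ of degree between $1$ and $d-1$. Since $R_i$ vanishes on $Z(\cG\cup\cG_i)$, the form $F$ vanishes on $Z(\cG\cup\cG_i\cup\{L^{(i)}_1,\ldots,L^{(i)}_r\})$.

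\emph{Step 2 (fiberwise span intersection).} Let $\pi\colon V^{[d]}\to V^{[d-1]}$ forget the last coordinate. For $w\in V^{[d-1]}$ let $A_w,B^1_w,B^2_w\subseteq V_d^{*}$ be the spans of the linear functionals obtained by specializing the $[d-1]$-coordinates to $w$ in the layers of $\cG$, of $\cG_1$, and of $\cG_2$ that involve the coordinate $d$; note $F(w,\cdot)\in V_d^{*}$ because $F$ is linear in its last slot. Step 1 translates to: for $w$ in the base locus $Z(\cG_{[d-1]}\cup(\cG_i)_{[d-1]}\cup\{L^{(i)}_k\}_k)$ we have $F(w,\cdot)\in A_w+B^i_w$. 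Now for a tower all of whose layers consist of \emph{linear} forms, being $(A,B,\rho)$-strong is the same as those forms being linearly independent, so applying lemma \ref{lem:fixed-reg} to $\cG\cup\cG_1\cup\cG_2$ with $I=[d-1]$ --- after enlarging the strength constants via claim \ref{str-shuffle} so that $\rho$ is as large a polynomial in $r$ as desired --- yields a bad set $\cB$ of codimension $\ge\rho$ in $Z((\cG')_{[d-1]})$ outside of which $A_w\cup B^1_w\cup B^2_w$ is linearly independent. When $A_w\cup B^1_w\cup B^2_w$ is independent one checks directly that $(A_w+B^1_w)\cap(A_w+B^2_w)=A_w$; so for $w$ in the common base locus of the two families and off $\cB$ we get $F(w,\cdot)\in A_w$, i.e. $F$ vanishes on the entire fiber of $Z(\cG')$ over $w$.

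\emph{Step 3 (regularize and conclude).} Apply lemma \ref{rel-rel-reg} to regularize the $\le 2r$ forms $\{L^{(i)}_k\}$ modulo $\cG'$ (which is absolutely strong, being obtained from $\cG\cup\cG_1\cup\cG_2$ by deleting layers): this produces a tower $\cH$ of degree $(1,\ldots,d-1)$ with $\{L^{(i)}_k\}\subset(\cG'\cup\cH)$, $|\cH|\le A r^{B}$, and $\cP:=\cG'\cup\cH$ strong. Since $\cP$ has degree $<^{\textnormal{top}}\ul d$, theorem \ref{thm:reg} (our standing hypothesis for such degrees) makes $\cP$ absolutely strong, so $(\cP)$ is prime and $Z(\cP)$ irreducible by lemma \ref{lem:str-prime}. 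By Step 2, $F$ vanishes on $Z(\cP)\setminus\pi^{-1}(\cB)$; on the other hand a dimension count shows $\pi(Z(\cP))$ has codimension $\le|\cH|<\rho$ in $Z((\cG')_{[d-1]})$, hence is not contained in $\cB$, so $Z(\cP)\not\subseteq\pi^{-1}(\cB)$. Irreducibility of $Z(\cP)$ then forces $F\restriction_{Z(\cP)}=0$, and the Nullstellensatz (with the descent from $\overline K$ to $K$ used in lemma \ref{lem:null-str}) gives $F\in(\cP)$; writing this membership modulo $(\cG')$ shows $\prk_{\cG'}(F)\le|\cH|\le A r^{B}$.

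\emph{Main obstacle.} The real work is Step 2: identifying and controlling the bad set $\cB$. One must show that the two spans $A_w+B^1_w$ and $A_w+B^2_w$ meet outside $A_w$ only where the specialized coordinate-$d$ functionals of $\cG,\cG_1,\cG_2$ become linearly dependent, and that this degeneracy locus has codimension beating any prescribed polynomial in $r$ --- which is exactly where the \emph{joint} absolute strength of $\cG\cup\cG_1\cup\cG_2$ is used, through lemma \ref{lem:fixed-reg} with $I=[d-1]$ together with the simple but indispensable observation that strength means linear independence for towers of linear layers. The rest is routine bookkeeping (claim \ref{str-shuffle}, lemma \ref{rel-rel-reg}) ensuring $\cP$ is polynomially large yet strong enough to be prime, plus the standard $\overline K$-to-$K$ descent for membership in an ideal generated by $K$-forms.
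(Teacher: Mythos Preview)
Your proof is correct and follows essentially the same route as the paper. The paper phrases the key linear-algebra step dually --- given $(x,y)\in Z$ with the specialized functionals $\cG(x),\cG_1(x),\cG_2(x)$ independent, it writes $y=y_1+y_2$ with $(x,y_i)\in Z(\cG\cup\cG_i)$ and concludes $F(x,y)=F(x,y_1)+F(x,y_2)=0$ --- whereas you argue on the functional side that $F(w,\cdot)\in(A_w+B^1_w)\cap(A_w+B^2_w)=A_w$; these are the same fact. The paper then invokes lemma~\ref{lem:null-str} as a black box, while you unpack it (regularize, pass to a prime ideal, Nullstellensatz), but the substance is identical.
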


\begin{proof}
    By assumption, we have multi-linear forms $Q^{(i)}_1,\ldots,Q^{(i)}_r$  depending only on the first $d-1$ coordinates such that $F\restriction_{Z(\cG\cup \cG_i ,  Q^{(i)}_1,\ldots,Q^{(i)}_r)} = 0$ for $i = 1,2$. We claim that $F$ vanishes on  
    \[
    Z = Z(\cG\cup(\cG_1\cup\cG_2)_{[d-1]},Q^{(1)}_1,\ldots,Q^{(1)}_r, Q^{(2)}_1,\ldots,Q^{(2)}_r).
    \]
    Once this is established, lemma \ref{lem:null-str} completes the proof.

    To prove that $F$ vanishes on $Z$, it's enough to prove that it vanishes on a dense subset. Consider the variety $Z' = Z(\cG\cup(\cG_1\cup\cG_2)_{[d-1]})$. By lemma \ref{lem:fixed-reg}, the tower $Z'(x)$ is $(1,1,1)$-strong for $x\in Z'_{[d-1]}$ outside a set of codimension $\ge 2r+1$. This simply means that the linear forms $\cG(x),\cG_1(x),\cG_2(x)$ are linearly independent. By considering the projection $Z'\to Z'_{[d-1]}$, this property holds for $(x,y)\in Z'$ outside a set of codimension $\ge 2r+1$.

    In particular, it holds for a dense open subset $U\subset Z$. If $(x,y)\in U$ then we can write $y = y_1+y_2$ where $(x,y_i)\in Z(\cG\cup\cG_i)$. By assumption, $F(x,y_i) = 0$ and thus $F(x,y) = F(x,y_1)+F(x,y_2) = 0$ as claimed.  
\end{proof}

\begin{corollary}\label{cor:gluing-many}
    For any $\ul{e}<^{\textnormal{top}} \ul{d}$ and $k\le d$ there exist $A(\ul{e},d,k),B(\ul{e},d,k)$ such that the following holds. Suppose that $\cG\cup\cG_1\cup\ldots\cup\cG_{2^k}$ is a multi-linear tower of degree $\ul{e}$ which is absolutely $(A,B,r)$-regular and that $F$ is a multi-linear form of degree $d$. If for each $i\in [k]$ we have $\prk_{\cG\cup\cG_i} (F) \le r$ then $\prk_{\cG\cup (\cG_1\cup\ldots\cup\cG_{2^k})_{[d-k]}} (F) \le Ar^B$.
\end{corollary}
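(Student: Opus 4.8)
The plan is to prove Corollary~\ref{cor:gluing-many} by induction on $k$, using Lemma~\ref{lem:gluing-two} as the engine for each step. The base case $k=1$ is essentially Lemma~\ref{lem:gluing-two} itself: given a tower $\cG\cup\cG_1\cup\cG_2$ that is absolutely $(A,B,r)$-strong and $\prk_{\cG\cup\cG_1}(F)\le r$ (take $\cG_2$ to be empty, or simply ignore it), we get $\prk_{\cG\cup(\cG_1)_{[d-1]}}(F)\le Ar^B$. Actually the content even for $k=1$ is exactly Lemma~\ref{lem:gluing-two}, so I would state the base case as $k=1$ and cite that lemma directly.

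\medskip

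For the inductive step, suppose the statement holds for $k-1$ with constants $A(\ul e,d,k-1),B(\ul e,d,k-1)$, and suppose $\cG\cup\cG_1\cup\ldots\cup\cG_{2^k}$ is absolutely $(A,B,r)$-strong with $\prk_{\cG\cup\cG_i}(F)\le r$ for all $i\in[2^k]$. I would pair up the towers: group $\cG_{2j-1}$ with $\cG_{2j}$ for $j\in[2^{k-1}]$. For each such pair, Lemma~\ref{lem:gluing-two}, applied with $\cG$ replaced by $\cG$ and $\cG_1,\cG_2$ replaced by $\cG_{2j-1},\cG_{2j}$ (the ambient tower being absolutely strong, which it is because it is a sub-tower of the full one — here one uses that a sub-tower of a strong tower is strong, perhaps after adjusting constants via Claim~\ref{str-shuffle} since the number of remaining layers changes), yields $\prk_{\cG\cup(\cG_{2j-1}\cup\cG_{2j})_{[d-1]}}(F)\le A_0 r^{B_0}$ for appropriate constants $A_0,B_0$ depending on $\ul e,d$. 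Set $\cG'_j = (\cG_{2j-1}\cup\cG_{2j})_{[d-1]}$; this is a multi-linear tower involving only the first $d-1$ coordinates, and we have $\prk_{\cG\cup\cG'_j}(F)\le A_0 r^{B_0} =: r'$ for each of the $2^{k-1}$ values of $j$. Now apply the inductive hypothesis to the tower $\cG\cup\cG'_1\cup\ldots\cup\cG'_{2^{k-1}}$ with error parameter $r'$: this requires the tower to be absolutely $(A,B,r')$-strong, which follows from the strength of the original tower after absorbing the polynomial blow-up $r'\le A_0 r^{B_0}$ into the constants via Claim~\ref{str-shuffle}. The conclusion is $\prk_{\cG\cup(\cG'_1\cup\ldots\cup\cG'_{2^{k-1}})_{[d-(k-1)]}}(F)\le A_1(r')^{B_1} \le A_2 r^{B_2}$. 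Since $(\cG'_1\cup\ldots\cup\cG'_{2^{k-1}})_{[d-(k-1)]} = (\cG_1\cup\ldots\cup\cG_{2^k})_{[d-1]}$ restricted further to $[d-(k-1)]$, which is $(\cG_1\cup\ldots\cup\cG_{2^k})_{[d-k]}$, this is exactly the claimed bound, with $A(\ul e,d,k),B(\ul e,d,k)$ the composite constants.

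\medskip

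The main bookkeeping obstacle I anticipate is keeping the strength hypotheses consistent: the towers $\cG'_j$ have a different (smaller) number of layers than the $\cG_i$, and at each stage the error parameter is replaced by a polynomially larger quantity, so one must repeatedly invoke Claim~\ref{str-shuffle}(1) to trade a $(C,D,\cdot)$-strong hypothesis at a larger error for an $(A,B,\cdot)$-strong conclusion at a smaller one, choosing the constants $A(\ul e,d,k),B(\ul e,d,k)$ large enough at the outset so that all these trades go through. The other point to be careful about is that the restriction operation $(\,\cdot\,)_{[d-j]}$ commutes with unions and that iterating it $k-1$ then once more (inside Lemma~\ref{lem:gluing-two}) gives $(\,\cdot\,)_{[d-k]}$; this is immediate from the definitions but worth spelling out. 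Everything else — linear independence reductions, primeness — is handled inside Lemma~\ref{lem:gluing-two} and its supporting lemmas, so the corollary is a clean induction once the constant-chasing is organized.

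\begin{proof}[Proof of Corollary~\ref{cor:gluing-many}]
    We argue by induction on $k$. The case $k=1$ is precisely Lemma~\ref{lem:gluing-two} (take $\cG_1,\cG_2$ to be the given tower and an empty tower, or apply the lemma verbatim). Assume the statement holds for $k-1$ with constants $A(\ul{e},d,k-1),B(\ul{e},d,k-1)$, and let $\cG\cup\cG_1\cup\ldots\cup\cG_{2^k}$ be absolutely $(A,B,r)$-strong with $\prk_{\cG\cup\cG_i}(F)\le r$ for all $i$. For $j\in[2^{k-1}]$, apply Lemma~\ref{lem:gluing-two} to the sub-tower $\cG\cup\cG_{2j-1}\cup\cG_{2j}$, which is absolutely strong since it is a sub-tower of the full one; after enlarging $A,B$ using Claim~\ref{str-shuffle} we obtain
    \[
    \prk_{\cG\cup(\cG_{2j-1}\cup\cG_{2j})_{[d-1]}}(F) \le A_0 r^{B_0} =: r',
    \]
    for constants $A_0,B_0$ depending only on $\ul{e},d$. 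Writing $\cG'_j = (\cG_{2j-1}\cup\cG_{2j})_{[d-1]}$, the tower $\cG\cup\cG'_1\cup\ldots\cup\cG'_{2^{k-1}}$ is, after a further application of Claim~\ref{str-shuffle} to absorb the passage from $r$ to $r'$, absolutely $(A,B,r')$-strong, and $\prk_{\cG\cup\cG'_j}(F)\le r'$ for each $j$. By the inductive hypothesis,
    \[
    \prk_{\cG\cup(\cG'_1\cup\ldots\cup\cG'_{2^{k-1}})_{[d-(k-1)]}}(F) \le A_1 (r')^{B_1} \le A_2 r^{B_2}.
    \]
    Since $(\cG'_1\cup\ldots\cup\cG'_{2^{k-1}})_{[d-(k-1)]} = (\cG_1\cup\ldots\cup\cG_{2^k})_{[d-k]}$, choosing $A(\ul{e},d,k),B(\ul{e},d,k)$ to be the composite constants $A_2,B_2$ completes the induction.
\end{proof}
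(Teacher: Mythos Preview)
Your proof is correct and follows essentially the same approach as the paper's: both argue by induction on $k$ with Lemma~\ref{lem:gluing-two} as the base case and the engine for the step. The only difference is organizational---you pair adjacent $\cG_{2j-1},\cG_{2j}$ first via Lemma~\ref{lem:gluing-two} and then invoke the inductive hypothesis on the resulting $2^{k-1}$ towers, whereas the paper first applies the inductive hypothesis to the two halves $\cH_1=\cG_1\cup\cdots\cup\cG_{2^k}$ and $\cH_2=\cG_{2^k+1}\cup\cdots\cup\cG_{2^{k+1}}$ and then glues once with Lemma~\ref{lem:gluing-two}; both orderings work.
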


\begin{proof}
    The proof is by induction on $k$. Lemma \ref{lem:gluing-two} is the base case $ k =1.$ Suppose that the statement holds for $k' \le k$ and that we wish to prove it for $k+1$. By applying the inductive hypothesis for $k$ we get that $\prk_{\cG\cup(\cH_i)_{[d-k]}} (F) \le Ar^B$ for $i = 1,2$, where 
    \[
    \cH_1 = \cG_1\cup\ldots\cup \cG_{2^k},\ \cH_2 = \cG_{2^k+1}\cup\ldots\cup \cG_{2^{k+1}}. 
    \]
    Now apply lemma \ref{lem:gluing-two} to get that $\prk_{\cG\cup(\cH_1\cup\cH_2)_{[d-k-1]}} (F) \le A_1(A_kr^{B_k})^{B_1}$, which completes the inductive step.
\end{proof}

We can now finish our proof.

\begin{proof}[Proof of lemma \ref{lem:gluing-deriv}]
    By lemma \ref{lem:str-prime}, the variety $Z(\cG_I)^{2^dm}$ is an irreducible complete intersection, so by proposition \ref{dense-ml} its $K$-points are dense. By lemma \ref{lem:reg-der}, the tower $\cG\cup\cG(\ul{y}^{(1)})\cup\ldots\cup \cG(\ul{y}^{(2^d)})$ is absolutely $(A,B,r)$-strong for a dense open subset of $\ul{y} \in Z(\cG_I)^{2^dm}$. Choose $\ul{y}^{(1)},\ldots,\ul{y}^{(2^d)}$ to be $K$-points in this open subset which also satisfy $\prk_{\cG\cup \cG(\ul{y}^{(i)})} (F) \le r$ for all $i\in [2^d]$. By corollary \ref{cor:gluing-many} we get $\prk_{\cG}(F) \le Ar^B$ as claimed.   
\end{proof}

\bibliographystyle{plain}
\bibliography{refs}

@ARTICLE{BDS,
       author = {{Bik}, Arthur and {Draisma}, Jan and {Snowden}, Andrew},
        title = "{Two improvements in Brauer's theorem on forms}",
      journal = {arXiv e-prints},
     keywords = {Mathematics - Number Theory, Mathematics - Algebraic Geometry},
         year = 2024,
        month = jan,
          eid = {arXiv:2401.02067},
        pages = {arXiv:2401.02067},
          doi = {10.48550/arXiv.2401.02067},
archivePrefix = {arXiv},
       eprint = {2401.02067},
 primaryClass = {math.NT},
       adsurl = {https://ui.adsabs.harvard.edu/abs/2024arXiv240102067B},
      adsnote = {Provided by the SAO/NASA Astrophysics Data System}
}

@article {Birch,
    AUTHOR = {Birch, B. J.},
     TITLE = {Forms in many variables},
   JOURNAL = {Proc. Roy. Soc. London Ser. A},
  FJOURNAL = {Proceedings of the Royal Society. London. Series A.
              Mathematical, Physical and Engineering Sciences},
    VOLUME = {265},
      YEAR = {1961/62},
     PAGES = {245--263},
      ISSN = {0962-8444,2053-9169},
   MRCLASS = {10.15},
  MRNUMBER = {150129},
MRREVIEWER = {John\ V.\ Armitage},
       DOI = {10.1098/rspa.1962.0007},
       URL = {https://doi-org.proxy.lib.umich.edu/10.1098/rspa.1962.0007},
}

@ARTICLE{L-small,
       author = {{Lampert}, Amichai},
        title = "{Small ideals in polynomial rings and applications}",
      journal = {arXiv e-prints},
     keywords = {Mathematics - Commutative Algebra, Mathematics - Algebraic Geometry, Mathematics - Combinatorics, 13C99 (Primary) 14B05, 11T06 (Secondary)},
         year = 2023,
        month = sep,
          eid = {arXiv:2309.16847},
        pages = {arXiv:2309.16847},
          doi = {10.48550/arXiv.2309.16847},
archivePrefix = {arXiv},
       eprint = {2309.16847},
 primaryClass = {math.AC},
       adsurl = {https://ui.adsabs.harvard.edu/abs/2023arXiv230916847L},
      adsnote = {Provided by the SAO/NASA Astrophysics Data System}
}

@article {KLP,
    AUTHOR = {Kazhdan, David and Lampert, Amichai and Polishchuk, Alexander},
     TITLE = {Schmidt rank and singularities},
      NOTE = {Reprint of Ukra\"in. Mat. Zh. {\bf 75} (2023), no. 9,
              1248--1266},
   JOURNAL = {Ukrainian Math. J.},
  FJOURNAL = {Ukrainian Mathematical Journal},
    VOLUME = {75},
      YEAR = {2024},
    NUMBER = {9},
     PAGES = {1420--1442},
      ISSN = {0041-5995,1573-9376},
   MRCLASS = {14N07},
  MRNUMBER = {4720725},
}

@ARTICLE{LS-Birch,
       author = {{Lampert}, Amichai and {Snowden}, Andrew},
        title = "{Two improvements in Birch's theorem on forms}",
      journal = {arXiv e-prints},
     keywords = {Mathematics - Number Theory, Mathematics - Algebraic Geometry},
         year = 2024,
        month = jun,
          eid = {arXiv:2406.18498},
        pages = {arXiv:2406.18498},
          doi = {10.48550/arXiv.2406.18498},
archivePrefix = {arXiv},
       eprint = {2406.18498},
 primaryClass = {math.NT},
}

@ARTICLE{Lam-density,
       author = {{Lampert}, Amichai},
        title = "{Density of solutions for systems of forms}",
      journal = {arXiv e-prints},
     keywords = {Number Theory, Algebraic Geometry, 11D72, 11G25, 11G35, 11P55, 14G05},
         year = 2025,
        month = jul,
          eid = {arXiv:2507.11514},
        pages = {arXiv:2507.11514},
          doi = {10.48550/arXiv.2507.11514},
archivePrefix = {arXiv},
       eprint = {2507.11514},
 primaryClass = {math.NT},
       adsurl = {https://ui.adsabs.harvard.edu/abs/2025arXiv250711514L},
      adsnote = {Provided by the SAO/NASA Astrophysics Data System}
}

@article {Peck,
    AUTHOR = {Peck, L. G.},
     TITLE = {Diophantine equations in algebraic number fields},
   JOURNAL = {Amer. J. Math.},
  FJOURNAL = {American Journal of Mathematics},
    VOLUME = {71},
      YEAR = {1949},
     PAGES = {387--402},
      ISSN = {0002-9327,1080-6377},
}

@article {Schmidt,
    AUTHOR = {Schmidt, Wolfgang M.},
     TITLE = {The density of integer points on homogeneous varieties},
   JOURNAL = {Acta Math.},
  FJOURNAL = {Acta Mathematica},
    VOLUME = {154},
      YEAR = {1985},
    NUMBER = {3-4},
     PAGES = {243--296}
}

@article {Wooley,
    AUTHOR = {Wooley, Trevor D.},
     TITLE = {On the local solubility of {D}iophantine systems},
   JOURNAL = {Compositio Math.},
  FJOURNAL = {Compositio Mathematica},
    VOLUME = {111},
      YEAR = {1998},
    NUMBER = {2},
     PAGES = {149--165},
      ISSN = {0010-437X,1570-5846},
   MRCLASS = {11D72 (11D88 11E76)},
  MRNUMBER = {1606240},
MRREVIEWER = {R.\ C.\ Baker},
       DOI = {10.1023/A:1000298711968},
       URL = {https://doi.org/10.1023/A:1000298711968},
}

@article {Birch-odd,
    AUTHOR = {Birch, B. J.},
     TITLE = {Homogeneous forms of odd degree in a large number of
              variables},
   JOURNAL = {Mathematika},
  FJOURNAL = {Mathematika. A Journal of Pure and Applied Mathematics},
    VOLUME = {4},
      YEAR = {1957},
     PAGES = {102--105},
      ISSN = {0025-5793},
   MRCLASS = {10.00},
  MRNUMBER = {97359},
MRREVIEWER = {G.\ Whaples},
       DOI = {10.1112/S0025579300001145},
       URL = {https://doi.org/10.1112/S0025579300001145},
}

@article {Brauer,
    AUTHOR = {Brauer, Richard},
     TITLE = {A note on systems of homogeneous algebraic equations},
   JOURNAL = {Bull. Amer. Math. Soc.},
  FJOURNAL = {Bulletin of the American Mathematical Society},
    VOLUME = {51},
      YEAR = {1945},
     PAGES = {749--755},
      ISSN = {0002-9904},
   MRCLASS = {09.0X},
  MRNUMBER = {13127},
MRREVIEWER = {G.\ Whaples},
       DOI = {10.1090/S0002-9904-1945-08440-7},
       URL = {https://doi.org/10.1090/S0002-9904-1945-08440-7},
}

@article {LS,
    AUTHOR = {Leep, D. B. and Schmidt, W. M.},
     TITLE = {Systems of homogeneous equations},
   JOURNAL = {Invent. Math.},
  FJOURNAL = {Inventiones Mathematicae},
    VOLUME = {71},
      YEAR = {1983},
    NUMBER = {3},
     PAGES = {539--549},
      ISSN = {0020-9910,1432-1297},
   MRCLASS = {11D72 (11E99)},
  MRNUMBER = {695905},
MRREVIEWER = {D.\ J.\ Lewis},
       DOI = {10.1007/BF02095992},
       URL = {https://doi.org/10.1007/BF02095992},
}

@article {Dav-32,
    AUTHOR = {Davenport, H.},
     TITLE = {Cubic forms in thirty-two variables},
   JOURNAL = {Philos. Trans. Roy. Soc. London Ser. A},
  FJOURNAL = {Philosophical Transactions of the Royal Society of London.
              Series A. Mathematical and Physical Sciences},
    VOLUME = {251},
      YEAR = {1959},
     PAGES = {193--232},
      ISSN = {0080-4614},
   MRCLASS = {10.00},
  MRNUMBER = {105394},
MRREVIEWER = {L.\ Carlitz},
       DOI = {10.1098/rsta.1959.0002},
       URL = {https://doi.org/10.1098/rsta.1959.0002},
}

@article {Dav-29,
    AUTHOR = {Davenport, H.},
     TITLE = {Cubic forms in {$29$} variables},
   JOURNAL = {Proc. Roy. Soc. London Ser. A},
  FJOURNAL = {Proceedings of the Royal Society. London. Series A.
              Mathematical, Physical and Engineering Sciences},
    VOLUME = {266},
      YEAR = {1962},
     PAGES = {287--298},
      ISSN = {0962-8444,2053-9169},
   MRCLASS = {10.17 (10.25)},
  MRNUMBER = {136580},
MRREVIEWER = {B.\ J.\ Birch},
       DOI = {10.1098/rspa.1962.0062},
       URL = {https://doi.org/10.1098/rspa.1962.0062},
}

@article {Dav-16,
    AUTHOR = {Davenport, H.},
     TITLE = {Cubic forms in sixteen variables},
   JOURNAL = {Proc. Roy. Soc. London Ser. A},
  FJOURNAL = {Proceedings of the Royal Society. London. Series A.
              Mathematical, Physical and Engineering Sciences},
    VOLUME = {272},
      YEAR = {1963},
     PAGES = {285--303},
      ISSN = {0962-8444,2053-9169},
   MRCLASS = {10.17},
  MRNUMBER = {155800},
MRREVIEWER = {B.\ J.\ Birch},
       DOI = {10.1098/rspa.1963.0054},
       URL = {https://doi.org/10.1098/rspa.1963.0054},
}

@article {HB-14,
    AUTHOR = {Heath-Brown, D. R.},
     TITLE = {Cubic forms in 14 variables},
   JOURNAL = {Invent. Math.},
  FJOURNAL = {Inventiones Mathematicae},
    VOLUME = {170},
      YEAR = {2007},
    NUMBER = {1},
     PAGES = {199--230},
      ISSN = {0020-9910,1432-1297},
   MRCLASS = {11E76},
  MRNUMBER = {2336082},
MRREVIEWER = {Timothy\ D.\ Browning},
       DOI = {10.1007/s00222-007-0062-1},
       URL = {https://doi.org/10.1007/s00222-007-0062-1},
}

@book {Eisenbud,
    AUTHOR = {Eisenbud, David},
     TITLE = {Commutative algebra},
    SERIES = {Graduate Texts in Mathematics},
    VOLUME = {150},
      NOTE = {With a view toward algebraic geometry},
 PUBLISHER = {Springer-Verlag, New York},
      YEAR = {1995},
     PAGES = {xvi+785},
      ISBN = {0-387-94268-8; 0-387-94269-6},
   MRCLASS = {13-01 (14A05)},
  MRNUMBER = {1322960},
MRREVIEWER = {Matthew\ Miller},
       DOI = {10.1007/978-1-4612-5350-1},
       URL = {https://doi.org/10.1007/978-1-4612-5350-1},
}

@article {Skinner-HLS,
    AUTHOR = {Skinner, C. M.},
     TITLE = {Forms over number fields and weak approximation},
   JOURNAL = {Compositio Math.},
  FJOURNAL = {Compositio Mathematica},
    VOLUME = {106},
      YEAR = {1997},
    NUMBER = {1},
     PAGES = {11--29},
      ISSN = {0010-437X,1570-5846},
   MRCLASS = {14G05 (11G35 11P55)},
  MRNUMBER = {1446148},
MRREVIEWER = {Dan\ Abramovich},
       DOI = {10.1023/A:1000129818730},
       URL = {https://doi.org/10.1023/A:1000129818730},
}

@article {LZ-rel,
    AUTHOR = {Lampert, Amichai and Ziegler, Tamar},
     TITLE = {Relative rank and regularization},
   JOURNAL = {Forum Math. Sigma},
  FJOURNAL = {Forum of Mathematics. Sigma},
    VOLUME = {12},
      YEAR = {2024},
     PAGES = {Paper No. e29, 26},
      ISSN = {2050-5094},
   MRCLASS = {11B30 (11T06 13F20 14N07 15A03)},
  MRNUMBER = {4715068},
MRREVIEWER = {Jonathan\ Chapman},
       DOI = {10.1017/fms.2024.15},
       URL = {https://doi.org/10.1017/fms.2024.15},
}

@article {Sch-cubics,
    AUTHOR = {Schmidt, Wolfgang M.},
     TITLE = {On cubic polynomials. {IV}. {S}ystems of rational equations},
   JOURNAL = {Monatsh. Math.},
  FJOURNAL = {Monatshefte f\"ur Mathematik},
    VOLUME = {93},
      YEAR = {1982},
    NUMBER = {4},
     PAGES = {329--348},
      ISSN = {0026-9255,1436-5081},
   MRCLASS = {10B10},
  MRNUMBER = {666834},
MRREVIEWER = {H.\ G.\ Meijer},
       DOI = {10.1007/BF01295233},
       URL = {https://doi.org/10.1007/BF01295233},
}

@article {Diet-cubics,
    AUTHOR = {Dietmann, Rainer},
     TITLE = {On the {$h$}-invariant of cubic forms, and systems of cubic
              forms},
   JOURNAL = {Q. J. Math.},
  FJOURNAL = {The Quarterly Journal of Mathematics},
    VOLUME = {68},
      YEAR = {2017},
    NUMBER = {2},
     PAGES = {485--501},
      ISSN = {0033-5606,1464-3847},
   MRCLASS = {11D72 (11E76)},
  MRNUMBER = {3667211},
MRREVIEWER = {Julia\ Brandes},
       DOI = {10.1093/qmath/haw048},
       URL = {https://doi.org/10.1093/qmath/haw048},
}

@incollection {Wool-quintic,
    AUTHOR = {Wooley, Trevor D.},
     TITLE = {Forms in many variables},
 BOOKTITLE = {Analytic number theory ({K}yoto, 1996)},
    SERIES = {London Math. Soc. Lecture Note Ser.},
    VOLUME = {247},
     PAGES = {361--376},
 PUBLISHER = {Cambridge Univ. Press, Cambridge},
      YEAR = {1997},
      ISBN = {0-521-62512-2},
   MRCLASS = {11E76 (11D72)},
  MRNUMBER = {1695003},
MRREVIEWER = {Michael\ A.\ Bennett},
       DOI = {10.1017/CBO9780511666179.025},
       URL = {https://doi.org/10.1017/CBO9780511666179.025},
}

@article {FM,
    AUTHOR = {Frei, Christopher and Madritsch, Manfred},
     TITLE = {Forms of differing degrees over number fields},
   JOURNAL = {Mathematika},
  FJOURNAL = {Mathematika. A Journal of Pure and Applied Mathematics},
    VOLUME = {63},
      YEAR = {2017},
    NUMBER = {1},
     PAGES = {92--123},
      ISSN = {0025-5793,2041-7942},
   MRCLASS = {11G35 (11P55 14G05)},
  MRNUMBER = {3610007},
MRREVIEWER = {Damaris\ Schindler},
       DOI = {10.1112/S0025579316000206},
       URL = {https://doi-org.proxy.lib.umich.edu/10.1112/S0025579316000206},
}

@article {Mil,
    AUTHOR = {Mili\'cevi\'c, Luka},
     TITLE = {Polynomial bound for partition rank in terms of analytic rank},
   JOURNAL = {Geom. Funct. Anal.},
  FJOURNAL = {Geometric and Functional Analysis},
    VOLUME = {29},
      YEAR = {2019},
    NUMBER = {5},
     PAGES = {1503--1530},
}

@article {BHB,
    AUTHOR = {Browning, Tim and Heath-Brown, Roger},
     TITLE = {Forms in many variables and differing degrees},
   JOURNAL = {J. Eur. Math. Soc. (JEMS)},
  FJOURNAL = {Journal of the European Mathematical Society (JEMS)},
    VOLUME = {19},
      YEAR = {2017},
    NUMBER = {2},
     PAGES = {357--394},
      ISSN = {1435-9855,1435-9863},
   MRCLASS = {11P55 (11G35 14G05)},
  MRNUMBER = {3605019},
MRREVIEWER = {Joseph\ H.\ Silverman},
       DOI = {10.4171/JEMS/668},
       URL = {https://doi-org.proxy.lib.umich.edu/10.4171/JEMS/668},
}

@article {Wool-explicit,
    AUTHOR = {Wooley, Trevor D.},
     TITLE = {An explicit version of {B}irch's theorem},
   JOURNAL = {Acta Arith.},
  FJOURNAL = {Acta Arithmetica},
    VOLUME = {85},
      YEAR = {1998},
    NUMBER = {1},
     PAGES = {79--96},
      ISSN = {0065-1036,1730-6264},
   MRCLASS = {11D72 (11E76)},
  MRNUMBER = {1623369},
MRREVIEWER = {G.\ Greaves},
       DOI = {10.4064/aa-85-1-79-96},
       URL = {https://doi.org/10.4064/aa-85-1-79-96},
}

@article {Birch-p-adic,
    AUTHOR = {Birch, B. J.},
     TITLE = {Diagonal equations over {$p$}-adic fields},
   JOURNAL = {Acta Arith.},
  FJOURNAL = {Polska Akademia Nauk. Instytut Matematyczny. Acta Arithmetica},
    VOLUME = {9},
      YEAR = {1964},
     PAGES = {291--300},
      ISSN = {0065-1036},
   MRCLASS = {10.14},
  MRNUMBER = {167456},
MRREVIEWER = {D.\ J.\ Lewis},
       DOI = {10.4064/aa-9-3-291-300},
       URL = {https://doi.org/10.4064/aa-9-3-291-300},
}

\end{document}